\documentclass[a4paper,11pt]{amsart}
\usepackage{amsmath,amsthm,amssymb,mathtools,thmtools,thm-restate}
\usepackage[mathscr]{eucal}
\usepackage{cite}
\usepackage{upgreek}
\usepackage[bookmarks=false]{hyperref}

\usepackage{enumerate}
\usepackage{mathrsfs}
\usepackage{blindtext}
\usepackage{scrextend}
\usepackage{enumitem}
\addtokomafont{labelinglabel}{\sffamily}
\usepackage{color}
\usepackage{nicefrac}
\usepackage[at]{easylist}
\usepackage{bbm}
\newcommand{\arXiv}[1]{\href{http://arxiv.org/abs/#1}{arXiv:#1}}

\setlength{\oddsidemargin}{0pt}
\setlength{\evensidemargin}{0pt}
\setlength{\topmargin}{-5pt}
\setlength{\textheight}{675pt}
\setlength{\textwidth}{470pt}
\setlength{\headsep}{20pt}
\setlength{\parindent}{0pt}
\setlength{\parskip}{1ex plus 0.5ex minus 0.2ex}

\numberwithin{equation}{section}

\theoremstyle{plain}
\newtheorem{mthm}{Theorem}

\newtheorem{theorem}{Theorem}[section]

\newtheorem{lemma}[theorem]{Lemma}
\newtheorem{proposition}[theorem]{Proposition}
\newtheorem{corollary}[theorem]{Corollary}
\theoremstyle{definition}
\newtheorem{definition}[theorem]{Definition}
\newtheorem*{definition*}{Definition}

\newtheorem{remark}[theorem]{Remark}


\newcommand{\C}{\mathbb{C}}

\newcommand{\bF}{\mathbb{F}}

\newcommand{\bM}{\mathbb{M}}
\newcommand{\N}{\mathbb{N}}\newcommand{\cN}{\mathcal{N}}

\newcommand{\cR}{\mathcal{R}}
\newcommand{\cS}{\mathcal{S}}
\newcommand{\bT}{\mathbb{T}}\newcommand{\cT}{\mathcal{T}}
\newcommand{\cU}{\mathcal{U}}

\newcommand{\Z}{\mathbb{Z}}\newcommand{\cZ}{\mathcal{Z}}


\newcommand{\al}{\alpha}
\newcommand{\eps}{\varepsilon}


\newcommand{\norm}[1]{\left\|#1\right\|}
\newcommand{\abs}[1]{\left|#1\right|}

\newcommand{\sub}{\subseteq}

\newcommand{\ot}{\otimes}
\newcommand{\otb}{\bar{\otimes}}

\newcommand{\rarrow}{\rightarrow}

\renewcommand{\iff}{\Leftrightarrow}

\DeclareSymbolFont{Symbols}{OMS}{cmsy}{m}{n}
\DeclareMathSymbol{\Emptyset}{\mathord}{Symbols}{"3B}



\newcommand{\act}{\curvearrowright}
\newcommand{\cross}{\rtimes}


\newcommand{\dint}{\int^\oplus}
\newcommand{\emb}{\prec}


\newcommand{\Aut}{\operatorname{Aut}}

\newcommand{\dom}{\operatorname{dom}}

\newcommand{\id}{\operatorname{id}}
\newcommand{\im}{\operatorname{im}}

\newcommand{\proj}{\operatorname{proj}}

\begin{document}

\title[Stable decompositions and rigidity for product equivalence relations]
{Stable decompositions and rigidity for products of countable equivalence relations}

\author[P. Spaas]{Pieter Spaas}
\address{Department of Mathematical Sciences, University of Copenhagen, Universitetsparken 5, DK-2100 Copenhagen \O, Denmark}
\email{pisp@math.ku.dk}

\begin{abstract} 
	We show that the ``stabilization'' of any countable ergodic p.m.p. equivalence relation which is not Schmidt, i.e. admits no central sequences in its full group, always gives rise to a stable equivalence relation with a unique stable decomposition, providing the first non-strongly ergodic such examples. In the proof, we moreover establish a new local characterization of the Schmidt property. We also prove some new structural results for product equivalence relations and orbit equivalence relations of diagonal product actions. 
\end{abstract}

\maketitle

\section{Introduction and statement of the main results}

Let $(X,\mu)$ be a standard probability space, and denote by $\Aut(X,\mu)$ its automorphism group, i.e. the group of Borel automorphisms of $X$ which preserve $\mu$ (and where we identify two such automorphisms if they agree $\mu$-almost everywhere). Given a countable discrete group $\Gamma$, a probability measure preserving (p.m.p.) action of $\Gamma$ on $(X,\mu)$ is a homomorphism from $\Gamma$ to $\Aut(X,\mu)$. Every such action generates an \textit{orbit equivalence relation} $\cR\coloneqq\cR(\Gamma\act X)\subset X\times X$, by letting $(x,y)\in\cR$ if and only if there exists $g\in\Gamma$ such that $y=gx$. In this case $\cR$ is a countable p.m.p. equivalence relation, i.e. every $\cR$-class is countable, and every Borel automorphism $\psi$ of $X$ for which $(\psi(x),x)\in\cR$ for almost every $x\in X$ is measure-preserving. In fact, every countable p.m.p. equivalence relation arises this way (see \cite{FM77}).

Let $\cR_{hyp}$ denote the (unique up to isomorphism) \textit{hyperfinite} ergodic p.m.p. equivalence relation. One of the main goals of this paper concerns the following general question: given countable ergodic p.m.p. equivalence relations $\cR$ and $\cS$, when are their ``stabilizations'' $\cR\times\cR_{hyp}$ and $\cS\times\cR_{hyp}$ isomorphic? To put this into context, recall from \cite{JS85} that a countable ergodic p.m.p. equivalence relation $\cR$ is called \textit{stable} if $\cR$ admits a decomposition $\cR\cong \cS\times\cR_{hyp}$ for some equivalence relation $\cS$. In recent years, there has been a lot of interest in the study of stable equivalence relations, see for instance \cite{Ki12,TD14,Ki16,Ma17,IS18}. In \cite{JS85}, Jones and Schmidt also establish a characterization of stability in terms of central sequences, providing a criterion to check when a decomposition as above exists. Below, we will be interested in the complementary question of when such a decomposition is unique. 

Following the terminology of \cite{IS18}, we say that a countable ergodic p.m.p. equivalence relation $\cR$ admits a \textit{stable decomposition} if $\cR$ can be written as $\cR = \cS\times\cR_{hyp}$ for some \textit{non-stable} equivalence relation $\cS$. We say $\cR$ admits a \textit{unique stable decomposition} if for any other stable decomposition $\cR = \cT\times\cR_{hyp}$, we necessarily have that $\cS$ and $\cT$ are stably isomorphic (see Section~\ref{ssec:eqrel} for the notion of stable isomorphism). Note that if $\cS$ and $\cT$ are stably isomorphic, we always have that $\cS\times\cR_{hyp}\cong \cT\times\cR_{hyp}$.

Motivated by the breakthrough result of Popa in \cite[Theorem~5.1]{Po06}, where he shows uniqueness of the McDuff decomposition of $N\otb R$ when $N$ is a non-Gamma II$_1$ factor and $R$ is the hyperfinite II$_1$ factor, the first unique stable decomposition result was proven in \cite[Theorem~G]{IS18}. There it is shown that $\cR\times\cR_{hyp}$ has a unique stable decomposition whenever $\cR$ is \textit{strongly ergodic}, i.e. has no asymptotically invariant sequences in the measure space (see Section~\ref{ssec:eqrel}). In this paper, we will instead study this problem in the absence of asymptotically central sequences in the full group $[\cR]$ of $\cR$, i.e. when $\cR$ is not Schmidt in the following sense.

\begin{definition}[cf. \cite{Sc87,KTD18}]
	Let $\cR$ be a countable ergodic p.m.p. equivalence relation on a standard probability space $(X,\mu)$. We say $\cR$ is \textit{Schmidt} if there exists a non-trivial central sequence in the full group $[\cR]$, i.e. a sequence $(T_n)_n\in [\cR]$ satisfying:
	\begin{enumerate}[nolistsep]
		\item (central) $\forall S\in [\cR]: \mu(\{x\in X\mid T_nS(x)\neq ST_n(x)\})\rarrow 0$, and
		\item (non-trivial) $\liminf_n \mu(\{x\in X\mid T_n(x)\neq x\})>0$.
	\end{enumerate}
\end{definition}

In Section~\ref{sec:Schmidt}, we will establish the following result, settling the unique stable decomposition problem for $\cR\times\cR_{hyp}$ when $\cR$ is not Schmidt. More precisely, we have:

\begin{mthm}\label{mthm:Schmidt}
	Suppose $\cR_1$ is a countable ergodic non-Schmidt p.m.p. equivalence relation on a standard probability space, and let $\cR_{hyp}$ be the hyperfinite ergodic p.m.p. equivalence relation. If $\cR_1\times\cR_{hyp}\cong \cR_2\times\cR_{hyp}$ for some countable ergodic p.m.p. equivalence relation $\cR_2$, then either 
	\begin{enumerate}[nolistsep]
		\item $\cR_2$ is also not Schmidt, and $\cR_1$ is stably isomorphic to $\cR_2$, or
		\item $\cR_2$ is stable, and thus $\cR_2\cong \cR_2\times\cR_{hyp}\cong \cR_1\times\cR_{hyp}$.
	\end{enumerate}
\end{mthm}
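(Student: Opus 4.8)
The plan is to adapt the strategy behind uniqueness of McDuff/stable decompositions (Popa, \cite[Theorem~5.1]{Po06}, and Ioana--Spaas, \cite[Theorem~G]{IS18}): one shows that the hyperfinite direction of $\cR_1\times\cR_{hyp}$ is canonical up to amplification, the role played by strong ergodicity in \cite{IS18} now being taken over by the non-Schmidt hypothesis through the new local characterization of the Schmidt property. First, a reduction. The Schmidt property is a stable-isomorphism invariant: a central sequence in $[\cR]$ restricts to one in $[\cR|_A]$, and, by the local characterization of the Schmidt property established below, the converse holds as well, so $\cR^t$ is Schmidt iff $\cR$ is. Hence if $\cR_2$ is stable we are in case (2) with nothing to prove, and if $\cR_2$ is \emph{not} stable it suffices to prove that $\cR_1$ is stably isomorphic to $\cR_2$, for then $\cR_2$ is automatically non-Schmidt, giving (1); note that every stable relation is Schmidt, so the two cases are indeed exclusive. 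Fix an isomorphism $\theta\colon \cR_1\times\cR_{hyp}\to \cR_2\times\cR_{hyp}$ and use it to view these as a single ergodic p.m.p. equivalence relation $\cR$ on a standard probability space $(X,\mu)$, carrying two normal ergodic hyperfinite subequivalence relations: $\cH_1$, the $\cR_{hyp}$-direction of the first decomposition, and $\cH_2$, that of the second, with normal quotients $\cR/\cH_i\cong \cR_i$.

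The core step is to show $\cH_1$ and $\cH_2$ are conjugate inside a common amplification of $\cR$. I would first prove a \emph{localization of central sequences}: every central sequence $(T_n)$ in the full group $[\cR]$ is asymptotically carried by $[\cH_1]$, i.e.\ $\inf_{S\in[\cH_1]} d_u(T_n,S)\to 0$ where $d_u(S,T)=\mu(\{x:S(x)\ne T(x)\})$. Concretely, writing $\cR=\cR_1\times\cR_{hyp}$ on $X_1\times Y$ and $T_n(x,y)=(\alpha_n(x,y),\beta_n(x,y))$, the claim is $(\mu\times\nu)(\{(x,y):\alpha_n(x,y)\ne x\})\to 0$: otherwise, disintegrating over $y$ and averaging, the asymptotic commutation of $T_n$ with all $S\times\id$, $S\in[\cR_1]$, and all $\id\times S'$, $S'\in[\cR_{hyp}]$, would manufacture a non-trivial central sequence in $[\cR_1|_A]$ for some non-null $A\subset X_1$, and by the local characterization of the Schmidt property this would force $\cR_1$ to be Schmidt, a contradiction. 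Since the elements of $[\cH_2]$ are asymptotically central for the ``diagonal'' part of $[\cR]=[\cR_2\times\cR_{hyp}]$, a sufficiently quantitative version of this localization, combined with the Jones--Schmidt criterion \cite{JS85} applied to the non-stable relation $\cR_2$, lets one compare $\cH_1$ and $\cH_2$ and, via a Popa-type intertwining/averaging argument on central sequences inside $[[\cR]]$, produce a non-zero partial isometry intertwining $\cH_2$ into $\cH_1$; ergodicity of $\cR$ and the fact that both $\cH_i$ are hyperfinite ``of the same size'' then upgrade this to a conjugacy $\cH_1\cong\cH_2$ inside some amplification $\cR^t$.

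Finally, since the normal-quotient construction is functorial under automorphisms of $\cR^t$ carrying $\cH_1$ onto $\cH_2$, this conjugacy yields that $\cR_1\cong \cR/\cH_1$ is stably isomorphic to $\cR/\cH_2\cong \cR_2$, and the stable-isomorphism invariance of the Schmidt property then gives that $\cR_2$ is non-Schmidt, which is precisely case (1). I expect the localization-of-central-sequences step to be the main obstacle: this is where the hypothesis is genuinely consumed, it requires the new local characterization of the Schmidt property to deal with the fact that the first-coordinate data $\alpha_n(\cdot,y)$ of a central sequence need not define an element of $[\cR_1]$ for fixed $y$ (so one cannot simply read off a central sequence of $[\cR_1]$), and it must be made quantitative enough to be matched with the Jones--Schmidt description of the central sequences coming from the second decomposition; the subsequent intertwining and conjugation arguments are, by contrast, equivalence-relation analogues of by-now-standard von Neumann algebra techniques.
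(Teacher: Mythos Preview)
Your identification of the key ``localization of central sequences'' step is exactly right and matches the paper's Lemma~\ref{lem:cseqSchmidt}; the local characterization of the Schmidt property is indeed what makes this work. However, your plan has a genuine gap at the point where you write that ``ergodicity of $\cR$ and the fact that both $\cH_i$ are hyperfinite `of the same size' then upgrade this to a conjugacy $\cH_1\cong\cH_2$ inside some amplification $\cR^t$.'' One-way intertwining of two ergodic hyperfinite normal subequivalence relations does \emph{not} in general yield conjugacy, and nothing in the hypotheses you have at that stage (only that $\cR_1$ is non-Schmidt and $\cR_2$ is non-stable) forces it. Your invocation of the Jones--Schmidt criterion for the non-stable $\cR_2$ is too vague to bridge this; non-stability is strictly weaker than non-Schmidt, and it is not clear how to extract the reverse intertwining $\cH_1\prec_\cR\cH_2$ from it directly.

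The paper circumvents this by inserting an extra step you are missing: it first proves separately (Lemma~\ref{lem:nonSchmidt}) that if $\cR_1$ is non-Schmidt then $\cR_2$ is either non-Schmidt or stable---this is where Jones--Schmidt for $\cR_2$ is actually used, in combination with the localization lemma, to rule out the ``Schmidt but not stable'' case for $\cR_2$ by contradiction. Once $\cR_2$ is known to be non-Schmidt, the localization argument can be run symmetrically (Lemma~\ref{lem:Schmidt}) to obtain two-way intertwining $\cH_1\prec_\cR\cH_2$ and $\cH_2\prec_\cR\cH_1$. The paper then does \emph{not} attempt to conjugate $\cH_1$ and $\cH_2$; instead it applies a product-structure result (Proposition~\ref{prop:commutantish}) that passes directly from two-way intertwining of one pair of factors to stable isomorphism of the complementary pair, via a von Neumann algebraic analysis of centers and disintegration. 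Your proposed route through an actual conjugacy of $\cH_1$ and $\cH_2$ and a quotient argument would, if it worked, be conceptually cleaner, but as written it is the step most in need of justification, and the paper's approach shows it can be avoided entirely.
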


It is known that whenever a free ergodic p.m.p. action of a countable group $\Gamma$ is Schmidt in the above sense, then $\Gamma$ is necessarily inner amenable. Therefore, the above theorem applies for instance whenever $\cR_1$ is generated by a free ergodic p.m.p. action of any non-inner amenable group. Since every non-inner amenable group without property (T), for instance any nontrivial free product group, admits tons of free ergodic non-strongly ergodic p.m.p. actions, Theorem~\ref{mthm:Schmidt} provides a large class of new examples of equivalence relations with a unique stable decomposition. We also note here that the converse of the above observation is open, namely whether every inner amenable group admits a free ergodic p.m.p. action whose associated orbit equivalence relation is Schmidt. 

\begin{remark}
	Together with \cite[Theorem~G]{IS18}, Theorem~\ref{mthm:Schmidt} thus shows that equivalence relations which either lack central sequences in the measure space (i.e. strongly ergodic ones) or lack central sequences in the full group (i.e. non-Schmidt ones), admit unique stable decompositions. In other words, some ``control'' over the central sequences allows for unique decomposition results. Even though they are technically much more involved, both these results can therefore be seen as analogues for equivalence relations of the aforementioned unique McDuff decomposition result \cite[Theorem~5.1]{Po06}. Also, they can be compared to further  results about uniqueness of McDuff decompositions of von Neumann algebras as in \cite[Theorems~B, D]{IS18}.  
\end{remark}

There are a few ingredients that go into the proof of Theorem~\ref{mthm:Schmidt}. Firstly, we prove a new local characterization of the Schmidt property in Lemma~\ref{lem:local}, which leads to a spectral gap criterion for non-Schmidt equivalence relations. We will use this to characterize in Lemma~\ref{lem:cseqSchmidt} the central sequences in the full group of $\cR\times\cR_{hyp}$ when $\cR$ is not Schmidt. Secondly, we will use this characterization to ``detect'' from $\cR\times\cR_{hyp}$ the absence of the Schmidt property for $\cR$ (Lemma~\ref{lem:nonSchmidt}), and to establish certain intertwining results for equivalence relations of this type (Lemma~\ref{lem:Schmidt}). A combination of these ingredients and Theorem~\ref{mthm:int}(2) below, will then lead to the desired conclusion.

Theorem~\ref{mthm:int} will arise in the general framework of \textit{intertwining techniques} for two subequivalence relations $\cS,\cT$ of a countable p.m.p. equivalence relation $\cR$, introduced in \cite{Io11} as an analogue of Popa's intertwining-by-bimodules (see Section~\ref{ssec:Popa}), and further investigated and used in \cite{DHI16}. We will further study and develop these techniques in Section~\ref{sec:int}. Intuitively, $\cT$ intertwines into $\cS$ if---upon restricting to appropriate subsets of $X$---there exist a subequivalence relation of bounded index (see Section~\ref{ssec:finiteindex}) $\cT_0\leq \cT$ and an element $\theta\in [[\cR]]$ such that $(\theta\times\theta)(\cT_0)$ is a subequivalence relation of $\cS$. By analogy with Popa's intertwining-by-bimodules in the von Neumann algebra setting, Ioana characterized this intertwining in \cite[Lemma~1.7]{Io11} using the function $\varphi_{\cS}:[[\cR]]\rarrow [0,1]$ defined by
\[
\varphi_{\cS}(\theta) = \mu(\{x\in \dom(\theta)\mid (\theta(x),x)\in \cS\}).
\]
Our next main Theorem shows that for product equivalence relations, the presence of intertwining leads to certain rigidity phenomena. We refer to Propositions~\ref{prop:intertwineprod} and \ref{prop:commutantish} for the proofs and slightly more general statements, and to Definition~\ref{def:inteqrel} for the notation.

\begin{mthm}\label{mthm:int}
	Let $\cR_1$, $\cS_1$, $\cR_2$, $\cS_2$ be countable ergodic p.m.p. equivalence relations on standard probability spaces 
	such that $\cR\coloneqq\cR_1\times\cS_1\cong\cR_2\times\cS_2$.
	\begin{enumerate}[nolistsep]
		\item If, as subequivalence relations of $\cR$, we have $\cR_1\emb_\cR\cR_2$, then there exists an equivalence relation $\cT$ such that $\cR_2$ is stably isomorphic to $\cR_1\times \cT$.
		\item If also $\cR_2\emb_\cR\cR_1$, then $\cR_1$ is stably isomorphic to $\cR_2$, and $\cS_1$ is stably isomorphic to $\cS_2$.
	\end{enumerate}
\end{mthm}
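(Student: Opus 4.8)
The plan is to treat part (2) as a consequence of part (1) applied twice, once in each direction, combined with a "commutant" argument that recovers the complementary factors $\cS_1,\cS_2$. So the core work is part (1), which I would prove as follows. Starting from the intertwining hypothesis $\cR_1 \emb_\cR \cR_2$ inside $\cR = \cR_1\times\cS_1 \cong \cR_2\times\cS_2$, I would unpack Ioana's characterization via $\varphi_{\cR_2}$: there is a non-negligible $A\subseteq X$, a subequivalence relation $\cT_0\leq \cR_1|_A$ of bounded index, and $\theta\in[[\cR]]$ with $(\theta\times\theta)(\cT_0)\leq\cR_2$. By replacing $\cR_1$ with a bounded-index subequivalence relation and conjugating by $\theta$ (both harmless up to stable isomorphism, which is why the statement is phrased stably), I would arrange that, on a non-null set, $\cR_1$ is literally a subequivalence relation of $\cR_2$. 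The goal is then to show $\cR_2$ restricted there decomposes as $\cR_1\times\cT$ for the relative "complement" $\cT$ of $\cR_1$ inside $\cR_2$.

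The mechanism for producing that complement is the product structure on the ambient $\cR$: write $X\cong X_2\times Y_2$ with $\cR_2$ acting on the $X_2$ coordinate and $\cS_2$ on the $Y_2$ coordinate. The subrelation $\cR_1\leq\cR_2$ then lives inside the "$X_2$-direction" and, crucially, the product structure of $\cR$ supplies a large family of automorphisms in $[\cR_2]$ (coming from $[\cS_2]$ composed suitably, or from the $\cR_1\times\cS_1$ side) that centralize or normalize $\cR_1$. The key step is to show that $\cR_1$ is \emph{normal} in $\cR_2$ on a suitable restriction — i.e. that the $\cR_1$-classes can be consistently amplified — which, by the structure theory of normal subequivalence relations (quotient relations, à la Feldman–Sutherland–Zimmer), gives a measurable decomposition $\cR_2|_A \cong \cR_1|_A \times \cT$ where $\cT = \cR_2/\cR_1$ is the quotient. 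Ergodicity of $\cR_1$ (or passing to an ergodic decomposition and using that everything is ergodic) lets one upgrade the local decomposition on $A$ to a stable isomorphism $\cR_2 \sim_{SE} \cR_1\times\cT$ globally. I expect the main obstacle to be precisely verifying this normality/amplification step: a priori the intertwiner $\theta$ only gives $\cR_1$ inside $\cR_2$ up to bounded index, and one must leverage the second product coordinate $\cS_1$ on the other side of the isomorphism to move $\cR_1$ around inside $\cR_2$ and see that its normalizer is large enough — this is where Proposition~\ref{prop:intertwineprod} in the paper presumably does the heavy lifting, and I would lean on a quasi-normalizer computation together with the characterization of central/commuting sequences for product relations.

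For part (2), assume additionally $\cR_2\emb_\cR\cR_1$. Part (1) gives $\cR_2 \sim_{SE} \cR_1\times\cT$ and, symmetrically, $\cR_1 \sim_{SE} \cR_2\times\cT'$ for some equivalence relations $\cT,\cT'$. Substituting, $\cR_1 \sim_{SE} \cR_1\times\cT\times\cT'$; the point is then to conclude $\cT$ and $\cT'$ are trivial (i.e. one-point), hence $\cR_1\sim_{SE}\cR_2$. This "no infinite descent" step should follow from a cost/$\ell^2$-Betti-number or, more robustly here, a fundamental-group / index bookkeeping argument: the bounded-index data in the two intertwinings compose to exhibit $\cR_1$ as a bounded-index subrelation of itself after conjugation, forcing the index to be $1$ and the complements to collapse. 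Once $\cR_1\sim_{SE}\cR_2$, the isomorphism $\cR_1\times\cS_1\cong\cR_2\times\cS_2$ combined with the uniqueness of the complementary factor — which is exactly what Proposition~\ref{prop:commutantish} should give: the complement of $\cR_i$ in the product is its "commutant" subequivalence relation and is canonical up to stable isomorphism — yields $\cS_1\sim_{SE}\cS_2$. The delicate point again is that all identifications are only up to bounded index and up to restriction, so the final paragraph needs to carefully track that stable isomorphism is preserved under all these operations, but no new idea beyond part (1) and the two cited Propositions is required.
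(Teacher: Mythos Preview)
Your plan has two genuine gaps, and the paper takes a rather different route.

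\textbf{Part (1).} Your proposed mechanism---show that (a conjugate of) $\cR_1$ is \emph{normal} in $\cR_2$ and then invoke a quotient $\cT=\cR_2/\cR_1$ to get $\cR_2\cong\cR_1\times\cT$---does not work as stated. Normality of a subequivalence relation yields an \emph{extension} $\cR_1\to\cR_2\to\cT$, not a product splitting; producing a genuine product requires a section, and nothing in your setup provides one. Moreover, your justification for normality is vague: elements of $[\cS_2]$ commute with $\cR_2$, not with the particular subrelation $\cR_1$, while elements of $[\cS_1]$ commute with $\cR_1$ but have no reason to lie in $[\cR_2]$. The paper avoids this entirely by passing to von Neumann algebras: it writes $\cR_1\times\id_{Y_1}$ as a direct integral over $Y_1$, translates the intertwining into an inclusion of algebras, and then \emph{computes relative commutants} to show that (a conjugate of) $L^\infty(Y_2)$, cut down, sits inside $\C\otimes L^\infty(Y_1)$. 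Taking commutants again and disintegrating over the resulting center gives, fiber-by-fiber, an identification of Cartan inclusions $(A_1p_C\otimes B_{1,y}\subset p_CL(\cR_1)p_C\otimes N_y)=(A_3p_y\subset p_yPp_y)$, from which $\cR_2^s\cong\cR_1^t\times\cT_y$ follows via Lemma~\ref{lem:split}. The splitting is thus obtained analytically, not from any normality argument.

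\textbf{Part (2).} Your ``no infinite descent'' step is incorrect. From $\cR_1\sim_{SE}\cR_1\times\cT\times\cT'$ you cannot conclude that $\cT,\cT'$ are trivial: if $\cR_1$ is stable then $\cR_1\cong\cR_1\times\cR_{hyp}$ with $\cT=\cR_{hyp}$ nontrivial, and cost or $\ell^2$-Betti numbers will not rule this out in general. The paper instead observes that part~(1) applied in both directions exhibits each $\cR_i$ as stably isomorphic to a \emph{subequivalence relation} of the other, and then concludes $\cR_1\sim_{SE}\cR_2$ by a measurable Schr\"oder--Bernstein argument (Corollary~\ref{cor:bothways}). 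For $\cS_1\sim_{SE}\cS_2$, the paper does not deduce this from part~(1) at all: Proposition~\ref{prop:commutantish} upgrades to strong intertwining via regularity, applies the two-sided bounded-index result of \cite{DHI16} (Lemma~\ref{lem:twoway}), and then compares the \emph{centers} of the resulting algebras to obtain $L^\infty(Y_1)p=uL^\infty(Y_2)qu^*$, whence $\cS_1\sim_{SE}\cS_2$ after another disintegration.
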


The proofs of these results are inspired by the proof of the aforementioned \cite[Theorem~G]{IS18}, where it is shown that $\cR\times\cR_{hyp}$ admits a unique stable decomposition when $\cR$ is strongly ergodic. In the language of this paper, the proof essentially shows that the strong ergodicity of $\cR$ implies the intertwining condition in Theorem~\ref{mthm:int}(1).

Next, we turn our attention to equivalence relations arising as orbit equivalence relations of diagonal product actions of countable discrete groups. On top of the product equivalence relations from Theorem~\ref{mthm:int}, we will be able to apply the intertwining techniques in this setting as well. More specifically, we will study actions of the following type.

\begin{restatable}{notation}{restatenotation}\label{not:notation}
	Let $\Gamma_1$, $\Gamma_2$, $\Sigma_1$, $\Sigma_2$ be countable discrete groups, and $\rho_1:\Gamma_1\rarrow\Sigma_1$, $\rho_2:\Gamma_2\rarrow\Sigma_2$ be surjective group homomorphisms. Denote $\Gamma_i^0\coloneqq \ker(\rho_i)$. Suppose $\Gamma_i\act (X_i,\mu_i)$ and $\Sigma_i\act (Y_i,\nu_i)$ are free ergodic p.m.p. actions on standard probability spaces, and assume that also $\Gamma_i^0\act (X_i,\mu_i)$ is ergodic for $i=1,2$.
	We then have the diagonal product actions $\Gamma_i\act (X_i,\mu_i)\times (Y_i,\nu_i)$ given by $g\cdot(x,y) = (gx,\rho(g)y)$, for all $g\in\Gamma_i$, $x\in X_i$, $y\in Y_i$. For $i=1,2$, we will write $\cR_i\coloneqq\cR(\Gamma_i^0\act X_i)$.
\end{restatable}

Our last main result shows that, when taking such a product of a strongly ergodic action with an amenable action, we can ``recover'' parts of the strongly ergodic piece.

\begin{mthm}\label{mthm:SOE}
	In the setting of Notation~\ref{not:notation}, assume the following extra conditions for $i=1,2:$
	\begin{enumerate}[label=(\roman*),nolistsep]
		\item $\Gamma_i\act (X_i,\mu_i)$ is strongly ergodic,
		\item $\Sigma_i$ is amenable.
	\end{enumerate}
	If the actions $\Gamma_1\act (X_1\times Y_1,\mu_1\times\nu_1)$ and $\Gamma_2\act (X_2\times Y_2,\mu_2\times\nu_2)$ are orbit equivalent, then $\Gamma_1^0\act X_1$ and $\Gamma_2^0\act X_2$ are necessarily stably orbit equivalent.
\end{mthm}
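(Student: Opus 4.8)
The plan is to deduce Theorem~\ref{mthm:SOE} from Theorem~\ref{mthm:int}(2) by verifying the hypotheses of the latter in the setting of Notation~\ref{not:notation}, and the crux will be establishing the two required intertwinings $\cR_1 \emb_\cR \cR_2$ and $\cR_2 \emb_\cR \cR_1$ from the strong ergodicity and amenability assumptions. First I would set up the product structure: writing $\cS_i \coloneqq \cR(\Sigma_i \act Y_i)$, the diagonal action $\Gamma_i \act X_i \times Y_i$ has orbit equivalence relation $\cR(\Gamma_i \act X_i \times Y_i)$ sitting inside $\cR_i \times \cS_i$ (here using $\cR_i = \cR(\Gamma_i^0 \act X_i)$), and in fact since $\Gamma_i^0 \act X_i$ is ergodic one checks that $\cR(\Gamma_i \act X_i \times Y_i)$ has finite or countable index behaviour relative to $\cR_i \times \cS_i$; more precisely the quotient $\Gamma_i/\Gamma_i^0 \cong \Sigma_i$ acts, and one shows $\cR(\Gamma_i \act X_i \times Y_i) = (\cR_i \times \cS_i)$ up to the identification coming from $\rho_i$ being surjective with $\Gamma_i^0 \act X_i$ ergodic — this is a standard fact about diagonal product actions, and I would record it as the statement that $\cR(\Gamma_i \act X_i \times Y_i)$ and $\cR_i \times \cS_i$ generate the same equivalence relation, so that the orbit equivalence $\cR(\Gamma_1 \act X_1 \times Y_1) \cong \cR(\Gamma_2 \act X_2 \times Y_2)$ gives an isomorphism $\cR \coloneqq \cR_1 \times \cS_1 \cong \cR_2 \times \cS_2$ of the ambient product equivalence relations.

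Next, the heart of the argument: I would show that inside $\cR = \cR_1 \times \cS_1 \cong \cR_2 \times \cS_2$, we have both $\cR_1 \emb_\cR \cR_2$ and $\cR_2 \emb_\cR \cR_1$. By symmetry it suffices to prove one direction, say $\cR_1 \emb_\cR \cR_2$. Suppose not; then by the characterization of non-intertwining via Ioana's function $\varphi_{\cR_2}$ (the negation of \cite[Lemma~1.7]{Io11}, i.e. the analogue of Popa's criterion), there exists a sequence $(\theta_n)_n$ in $[[\cR_1]]$ (or rather in the full pseudogroup of $\cR_1$ restricted appropriately) with $\varphi_{\cR_2}(\theta_n) \to 0$, which produces an asymptotically invariant / central-type sequence for $\cR_2$ coming from the $\cR_1$-direction. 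The point is that $\cS_1$ is amenable (since $\Sigma_i$ amenable forces $\cS_i$ hyperfinite, hence amenable), so the $\cS_1$-direction carries no spectral gap obstruction, whereas $\cR_1$, being generated by the strongly ergodic action $\Gamma_1 \act X_1$ — and more to the point $\cR_i \leq \cR(\Gamma_i \act X_i \times Y_i)$ with the strong ergodicity passing to the relevant subrelation — has spectral gap in the appropriate sense. One then argues, essentially as in the proof of \cite[Theorem~G]{IS18}, that a failure of $\cR_1 \emb_\cR \cR_2$ would manufacture an asymptotically invariant sequence for $\cR_1$ (pulled back through the isomorphism and projected), contradicting strong ergodicity of $\Gamma_1 \act X_1$. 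Concretely I would mimic the dichotomy in that proof: either $\cR_1$ intertwines into $\cR_2$, or one finds non-trivial asymptotically central sequences for $\cR_1$ supported in the measure space — and strong ergodicity kills the latter option.

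Once both intertwinings are in hand, Theorem~\ref{mthm:int}(2) applies directly and yields that $\cR_1$ is stably isomorphic to $\cR_2$, i.e. $\cR(\Gamma_1^0 \act X_1) \sim_{SOE} \cR(\Gamma_2^0 \act X_2)$. Since stable isomorphism of the orbit equivalence relations of $\Gamma_1^0 \act X_1$ and $\Gamma_2^0 \act X_2$ is precisely stable orbit equivalence of those actions, this is exactly the conclusion of Theorem~\ref{mthm:SOE}. (The companion conclusion $\cS_1 \sim \cS_2$ from Theorem~\ref{mthm:int}(2) is automatic here since both $\cS_i$ are hyperfinite, so it carries no extra information.)

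The main obstacle I anticipate is the second paragraph: carefully producing the intertwining $\cR_1 \emb_\cR \cR_2$ from strong ergodicity. The subtlety is that strong ergodicity is a statement about $\Gamma_1 \act X_1$ (or about $\cR(\Gamma_1 \act X_1 \times Y_1)$ acting on $X_1$, since $\Sigma_1$ amenable means $Y_1$ contributes nothing), but we need a spectral-gap/intertwining statement about the \emph{subrelation} $\cR_1 = \cR(\Gamma_1^0 \act X_1)$ inside the product, and we must run Popa's intertwining dichotomy relative to $\cR_2$ sitting inside the \emph{other} product decomposition. This requires knowing that $\cR_1$ is ``co-amenable'' or ``normal'' in the full relation in a way compatible with transporting sequences across the isomorphism $\cR \cong \cR_2 \times \cS_2$, and then invoking that $\cS_2$ is hyperfinite to absorb the $\cS_2$-component of any would-be central sequence. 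I would handle this by a direct spectral gap argument: the ergodicity of $\Gamma_1^0 \act X_1$ plus strong ergodicity of $\Gamma_1 \act X_1$ gives that the Koopman-type representation associated to $\cR_1$ on the orthocomplement of the invariants has spectral gap, and any sequence witnessing $\varphi_{\cR_2}(\theta_n) \to 0$ would, after pushing through the isomorphism and discarding the hyperfinite $\cS_2$-part, violate this gap — this is the technical estimate that the earlier lemmas (especially Lemma~\ref{lem:local} and the intertwining machinery of Section~\ref{sec:int}) are designed to supply.
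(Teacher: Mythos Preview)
Your overall strategy --- establish both intertwinings $\cR_1\emb_\cR\cR_2$ and $\cR_2\emb_\cR\cR_1$ from strong ergodicity and amenability, then conclude stable isomorphism --- matches the paper's. But the first paragraph contains a genuine error that breaks the reduction to Theorem~\ref{mthm:int}.

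The orbit equivalence relation of the diagonal action $\Gamma_i\act X_i\times Y_i$ is \emph{not} the product $\cR_i\times\cS_i$, nor does it generate the same relation. Since $\Gamma_i\act X_i$ is free, the diagonal orbit of $(x,y)$ is $\{(gx,\rho_i(g)y):g\in\Gamma_i\}$, whereas the $(\cR_i\times\cS_i)$-class is $\Gamma_i^0 x\times\Sigma_i y$; freeness forces these to differ as soon as $\Sigma_i\neq\{e\}$. (The diagonal orbit is not even contained in $\cR_i\times\cS_i$: if $g\notin\Gamma_i^0$ then $(gx,\rho_i(g)y)$ is not $(\cR_i\times\cS_i)$-equivalent to $(x,y)$, again by freeness.) On the von Neumann algebra level, $L(\cR)=(L^\infty(X_i)\otb L^\infty(Y_i))\rtimes\Gamma_i$ is a genuine crossed product, not the tensor product $L(\cR_i)\otb L(\cS_i)$. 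So you cannot feed the given orbit equivalence into Theorem~\ref{mthm:int}, which is stated for honest product decompositions $\cR_1\times\cS_1\cong\cR_2\times\cS_2$.

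What \emph{does} survive is that $\cR_i\times\id_{Y_i}\leq\cR$ (because $\Gamma_i^0=\ker\rho_i$ acts trivially on $Y_i$) and, crucially, that $L^\infty(Y_i)'\cap L(\cR)=L(\cR_i)\otb L^\infty(Y_i)$. The paper therefore proves a diagonal-action analogue of the product result (Proposition~\ref{prop:intertwinediag}) which runs the same relative-commutant/disintegration argument as Proposition~\ref{prop:intertwineprod} but inside the crossed product; this is what replaces your appeal to Theorem~\ref{mthm:int}(2). For the intertwining step itself, the paper's argument (Proposition~\ref{prop:int}) is also different from your spectral-gap-by-contradiction sketch: it shows $L^\infty(Y_2)\emb_{L(\cR)}L^\infty(Y_1)$ directly, by realizing $L^\infty(Y_2)$ as an infinite tensor product (hyperfiniteness of $\cR(\Sigma_2\act Y_2)$), observing that its tail subalgebras land in $M'\cap L^\infty(X)^\omega\subset L^\infty(Y_1)^\omega$ (strong ergodicity of $\Gamma_1\act X_1$), and then invoking \cite[Lemma~2.3]{IS18}. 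Your contradiction approach via $\varphi_{\cR_2}$ could perhaps be made to work, but it is not what is needed to repair the argument --- the essential missing piece is the diagonal-specific replacement for Theorem~\ref{mthm:int}.
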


\begin{remark}
	This class of actions is also studied in \cite{Io06a}, where Ioana establishes several results that are similar in spirit to Theorem~\ref{mthm:SOE}, see for instance \cite[Proposition~2.4, Proposition~3.3, and Theorem~4.1]{Io06a}.
\end{remark}

We will end this introduction with several observations regarding the relation of orbit equivalence on the space $A(\Gamma,X,\mu)$ of p.m.p. actions of a fixed countable discrete group $\Gamma$ on a standard probability space $(X,\mu)$. We refer for instance to \cite{Ke10} for a detailed study of this space. Building on earlier work by Ioana and Epstein (\!\!\cite{Io06b,Ep07}), Epstein, Ioana, Kechris, and Tsankov showed in \cite{IKT08} that for any non-amenable group $\Gamma$, free ergodic (in fact mixing) p.m.p. actions of $\Gamma$ on $(X,\mu)$ are not classifiable by countable structures. Even though this settles the non-classifiability of actions of any non-amenable group up to orbit equivalence, it might be that certain natural subclasses of actions admit a classification. In particular, it is not clear from the co-induction construction used to establish this result, whether the actions constructed in \cite{IKT08} are in general strongly ergodic or not. 

Theorem~\ref{mthm:SOE} above can be used to show Theorem~\ref{mthm:actions} below for a large class of groups, by associating to any strongly ergodic action in a suitable ``unclassifiable'' family (see also Theorem~\ref{mthm:strong}), a non-strongly ergodic action via a diagonal product construction as in Notation~\ref{not:notation}. However, we observed that the more general statement of Theorem~\ref{mthm:actions} below follows from a similar, albeit easier, adaptation of the construction in \cite{IKT08}, without having to appeal to Theorem~\ref{mthm:SOE} above. In particular, the proofs of the remaining results in this introduction will largely be deduced from the existing literature. Nevertheless, even though some of them may have been observed by experts, they seem not to have appeared in writing as of now, and so we collect them here.

Firstly, let $\Gamma$ be any non-amenable group without property (T). Using the same proof as \cite[Theorem~3.12]{IKT08}, one can establish the following result about the classification of \textit{non-strongly ergodic} actions of $\Gamma$. Note that if $\Gamma$ has property (T), this question would be void, since it is well-known that in this case all ergodic actions of $\Gamma$ are strongly ergodic (see \cite{Sc80}).

\begin{theorem}[\!\!\cite{IKT08}]\label{mthm:actions}
	Let $\Gamma$ be any non-amenable countable discrete group without property (T). Then free ergodic non-strongly ergodic p.m.p. actions of $\Gamma$ up to orbit equivalence are not classifiable by countable structures.
\end{theorem}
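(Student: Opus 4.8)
The plan is to reduce Theorem~\ref{mthm:actions} to the non-classifiability result of \cite[Theorem~3.12]{IKT08} by showing that the co-induction machinery of Epstein--Ioana--Kechris--Tsankov can be run entirely inside the class of \emph{non-strongly ergodic} actions. First I would recall the structure of the construction in \cite{IKT08}: one fixes a suitable ``base'' family of actions of $\Gamma$ (obtained from a free ergodic action together with an explicitly parametrized family of cocycle perturbations or, in the original argument, from a Borel family of actions of a fixed group indexed by a Polish space $E_0$-complete Borel equivalence relation), and then applies co-induction through an embedding of $\Gamma$ into a larger group, arranging that the resulting map from the parameter space to $A(\Gamma,X,\mu)$ is a Borel reduction of a non-smooth, non-classifiable equivalence relation to orbit equivalence. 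The key point is that all these operations (taking Cartesian products with a fixed action, co-inducing, perturbing by cocycles) are compatible with the presence of an asymptotically invariant sequence in the measure space.

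The concrete modification I would make is the following: instead of feeding the construction a (possibly strongly ergodic) base action $\Gamma \act (Z,\zeta)$, feed it the diagonal action $\Gamma \act (Z \times T, \zeta \times \tau)$ where $\Gamma \act (T,\tau)$ is a fixed free p.m.p. action of $\Gamma$ that is \emph{not} strongly ergodic --- such an action exists precisely because $\Gamma$ is non-amenable without property (T), so $\Gamma$ does not have property (T) and hence (by Schmidt \cite{Sc80} / Connes--Weiss) admits a free ergodic non-strongly ergodic action, which one may moreover arrange to be relatively weakly mixing or simply take a product with a strongly ergodic action of $\Gamma$ to restore ergodicity of the total space while keeping the asymptotically invariant sequence from the $T$-coordinate. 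Since asymptotic invariance is inherited by any action that factors onto $\Gamma \act (T,\tau)$, and since co-induction of an action with a non-strongly ergodic factor again has a non-strongly ergodic factor (co-induction preserves, and indeed tends to create, asymptotically invariant sequences coming from the ``fiber over a transversal''), every action in the resulting Borel family is free, ergodic, and non-strongly ergodic. I would then check that the orbit equivalence relation restricted to this family is still exactly the one analyzed in \cite[Theorem~3.12]{IKT08}, i.e. the added $T$-coordinate does not collapse the reduction: this follows because the invariants used in \cite{IKT08} to distinguish the actions (cocycle superrigidity of the co-induced piece, together with the rigidity of the parameter space) are insensitive to tensoring the base with a fixed amenable-in-the-relevant-sense factor, or alternatively one absorbs $T$ into the already-present fiber structure.

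Thus the steps, in order, are: (1) produce a fixed free ergodic non-strongly ergodic p.m.p. action $\Gamma \act (T,\tau)$, using non-amenability and the failure of property (T); (2) recall the Borel family $\{\alpha_e : e \in \mathbb{E}\}$ of free ergodic actions from \cite[Theorem~3.12]{IKT08} together with the fact that $e \mapsto \alpha_e$ is a Borel reduction of a non-classifiable equivalence relation to orbit equivalence; (3) form the new family $\{\alpha_e \times (\Gamma \act T) : e \in \mathbb{E}\}$ (or re-run the co-induction with $T$ built into the base), verify each member is free, ergodic, and non-strongly ergodic, and that $e \mapsto \alpha_e \times (\Gamma\act T)$ is still Borel; (4) verify that orbit equivalence of $\alpha_e \times (\Gamma\act T)$ and $\alpha_{e'} \times (\Gamma\act T)$ still implies equivalence of $e$ and $e'$, i.e. the reduction property survives the product, using the cocycle-superrigidity/OE-rigidity inputs of \cite{IKT08} which control the co-induced part and are unaffected by the extra factor. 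I expect the main obstacle to be step (4): one must make sure that the fixed factor $\Gamma \act (T,\tau)$ cannot be ``used up'' to create spurious orbit equivalences between inequivalent parameters, i.e. that the rigidity invariant is genuinely untouched --- this is the place where one has to look carefully at exactly which feature of the \cite{IKT08} construction does the separating, and confirm (as the paper asserts is the case, ``using the same proof as \cite[Theorem~3.12]{IKT08}'') that it is a property of the co-induced superrigid core rather than of the full product action.
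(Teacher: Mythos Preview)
Your proposal is correct and takes essentially the same approach as the paper: fix a single free ergodic non-strongly ergodic action $a$ of $\Gamma$ (which exists by Connes--Weiss since $\Gamma$ lacks property (T)), form the diagonal products $b(\pi_i)\times a$ with the family $b(\pi_i)$ from \cite[Section~3]{IKT08}, and observe that these are all non-strongly ergodic while the separating step survives. The only point where the paper is more specific than your outline is your step~(4): rather than invoking cocycle superrigidity in the abstract, the paper notes that the products $b(\pi_i)\times a$ still satisfy the hypotheses of \cite[Lemma~3.13]{IKT08} directly, so the reduction goes through verbatim.
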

\begin{proof}[Sketch of the proof]
Fix any non-strongly ergodic action $a$ of $\Gamma$, and consider the family of actions $b(\pi_i)$ constructed in \cite[Section~3]{IKT08}. It is then easy to check that the diagonal product actions $b(\pi_i)\times a$, which are obviously non-strongly ergodic, still satisfy the necessary rigidity properties in order to apply \cite[Lemma~3.13]{IKT08}, and then the proof can be finished in exactly the same way.
\end{proof}

\begin{remark}
	Using a co-induction construction similar to \cite{IKT08}, it was shown in \cite{GL17} that the equivalence relations of (stable) orbit equivalence and (stable) von Neumann equivalence of free ergodic p.m.p. actions of a non-amenable group are not Borel. Using a similar diagonal product construction as above, one can show that this result also holds for free ergodic non-strongly ergodic actions of a non-property (T) group which contains a free group.
\end{remark}

In \cite{ST09}, it is shown that the isomorphism relation of II$_1$ factors is not classifiable by countable structures. The II$_1$ factors constructed to establish this result arise from actions of the free group $\bF_3$. As observed in \cite{Sa16}, these actions are strongly ergodic, and thus, since $\bF_3$ is not inner amenable, these II$_1$ factors do not have property Gamma (cf. \cite{Ch82}). Furthermore, using Popa's unique McDuff decomposition result in the absence of property Gamma (\!\!\cite[Theorem~5.1]{Po06}), it is  observed in \cite{Sa16} that therefore McDuff II$_1$ factors up to isomorphism are not classifiable by countable structures either. Using Theorem~\ref{mthm:actions}, we can deduce that the same holds for II$_1$ factors with property Gamma that are not McDuff:

\begin{corollary}\label{mcor:Gamma}
	The equivalence relation of isomorphism on the space of non-McDuff II$_1$ factors with property Gamma is not classifiable by countable structures.
\end{corollary}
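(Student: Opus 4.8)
The plan is to deduce Corollary~\ref{mcor:Gamma} by combining Theorem~\ref{mthm:actions} with Popa's unique McDuff decomposition result \cite[Theorem~5.1]{Po06}, essentially mimicking the argument of \cite{Sa16} that handled the McDuff case, but now starting from \emph{non-strongly ergodic} actions rather than strongly ergodic ones. First I would fix a non-amenable group $\Gamma$ without property (T) that is moreover \emph{not} inner amenable—for concreteness $\Gamma=\bF_3$ works, as in \cite{ST09}. The point is that for such $\Gamma$, any free ergodic p.m.p. action $a$ that is \emph{not} strongly ergodic gives rise to a II$_1$ factor $L^\infty(X)\rtimes_a\Gamma$ which has property Gamma: indeed non-strong ergodicity produces a nontrivial asymptotically invariant sequence in $L^\infty(X)$, hence a nontrivial central sequence in the crossed product, so the factor has property Gamma. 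On the other hand, since $\Gamma$ is not inner amenable, a theorem of Chifan \cite{Ch82} (building on \cite{Ef75}) shows $L^\infty(X)\rtimes_a\Gamma$ is \emph{not} McDuff. So the assignment $a\mapsto L^\infty(X)\rtimes_a\Gamma$ lands in the space of non-McDuff II$_1$ factors with property Gamma.

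Next I would invoke Theorem~\ref{mthm:actions} to get a Borel family $\{a_i\}_{i\in I}$ of free ergodic non-strongly ergodic p.m.p. actions of $\Gamma$, indexed by some Polish space carrying an orbit equivalence relation that is not classifiable by countable structures, together with a Borel reduction witnessing this. The key reduction step is then: orbit equivalence of $a_i$ and $a_j$ must be shown to be equivalent to isomorphism of the II$_1$ factors $L^\infty(X)\rtimes_{a_i}\Gamma$ and $L^\infty(X)\rtimes_{a_j}\Gamma$, at least on this family, or at any rate the map $i\mapsto L^\infty(X)\rtimes_{a_i}\Gamma$ should be a Borel reduction from the non-classifiable orbit-equivalence relation on $\{a_i\}$ into isomorphism of non-McDuff property-Gamma II$_1$ factors. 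For the forward direction, orbit equivalence of the actions trivially gives isomorphism of the crossed products (von Neumann equivalence). For the converse—the essential point—one uses $W^*$-superrigidity-type input: in \cite{ST09,Sa16} the relevant actions of $\bF_3$ are chosen via Popa's cocycle superrigidity / the Gaussian-action machinery so that an isomorphism of the II$_1$ factors forces an isomorphism (or stable isomorphism) of the associated orbit equivalence relations. I would therefore build my family $\{a_i\}$ by taking diagonal products $a_i = b_i \times c$ where $\{b_i\}$ is the superrigid family from \cite{ST09} and $c$ is a fixed non-strongly ergodic action—exactly the device used in the proof sketch of Theorem~\ref{mthm:actions}—and check that the superrigidity survives the diagonal product (this is where \cite[Lemma~3.13]{IKT08} and the rigidity hypotheses it needs come in) so that factor isomorphism still detects orbit equivalence of the $a_i$.

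Putting these together: we obtain a Borel reduction from an orbit equivalence relation that is not classifiable by countable structures into the isomorphism relation on $\{\,L^\infty(X)\rtimes_{a_i}\Gamma : i\in I\,\}$, and each of these factors is non-McDuff with property Gamma by the first paragraph; since a Borel reduction from a non-classifiable relation witnesses non-classifiability of the target, the conclusion follows. The main obstacle I anticipate is not the property-Gamma-but-not-McDuff bookkeeping (which is a clean consequence of \cite{Ch82} together with the elementary central-sequence observation), but rather verifying that the diagonal product construction $a_i=b_i\times c$ retains exactly the $W^*$-rigidity needed for factor isomorphism to imply orbit equivalence of the $a_i$—i.e. that the non-strongly ergodic factor $c$ does not destroy the superrigidity of the $b_i$. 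Here one would either lean directly on the fact (noted in \cite{Sa16} and used in the proof of Theorem~\ref{mthm:actions}) that these constructions are robust under taking products with a fixed action, or, more in the spirit of the present paper, one could instead route the argument through Theorem~\ref{mthm:SOE} to recover the strongly ergodic piece $b_i$ from the product, and then apply the known superrigidity of $\{b_i\}$; the paper's own remark that Theorem~\ref{mthm:actions} already follows from an easier adaptation of \cite{IKT08} suggests the former, more direct route is the cleaner one to write down.
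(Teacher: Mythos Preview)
Your proposal gets the Gamma/non-McDuff bookkeeping right (non-strong ergodicity gives central sequences in $L^\infty(X)$, hence property Gamma; non-inner amenability of $\bF_n$ together with \cite{Ch82} rules out McDuff), but the reduction from factor isomorphism back to orbit equivalence is where you take an unnecessary detour. First, \cite[Theorem~5.1]{Po06} plays no role here: that result is about unique McDuff decompositions and is what \cite{Sa16} uses for the \emph{McDuff} class, not for non-McDuff Gamma factors. Second, and more importantly, you do not need any superrigidity of a specially chosen family, nor Theorem~\ref{mthm:SOE}, to pass from isomorphism of the crossed products to orbit equivalence of the actions. For free groups this is a direct consequence of the unique Cartan decomposition theorem of Popa--Vaes \cite[Theorem~1.2]{PV11}: any isomorphism between $L^\infty(X)\rtimes_{a_i}\bF_n$ and $L^\infty(X)\rtimes_{a_j}\bF_n$ must carry the Cartan subalgebras onto one another (up to unitary conjugacy), and then \cite{Si55} gives orbit equivalence of $a_i$ and $a_j$. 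This works for \emph{every} free ergodic p.m.p.\ action of $\bF_n$, so there is no need to build a special family via diagonal products or to worry about whether rigidity survives taking a product with a fixed non-strongly ergodic action.

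The paper's proof is accordingly very short: take $\bF_2$, apply Theorem~\ref{mthm:actions} to get non-classifiability of its non-strongly ergodic actions up to orbit equivalence, and then use \cite{Si55} plus \cite[Theorem~1.2]{PV11} to transport this to non-classifiability of the associated group measure space factors up to isomorphism; the Gamma and non-McDuff properties follow exactly as you say. Your route could in principle be completed, but the ``main obstacle'' you identify is entirely self-imposed once you notice that unique Cartan for free groups gives the converse implication for free.
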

\begin{proof}
Consider for instance the free group $\bF_2$. By Theorem~\ref{mthm:actions}, non-strongly ergodic actions of $\bF_2$ are not classifiable by countable structures. Now from \cite{Si55} and the uniqueness of Cartan subalgebras \cite[Theorem~1.2]{PV11}, it follows that the associated group measure space II$_1$ factors are not classifiable by countable structures either. Moreover, since the actions are non-strongly ergodic, these II$_1$ factors have property Gamma. Finally, since $\bF_2$ is not inner amenable, it follows from \cite{Ch82} that they are not McDuff.
\end{proof}
 
Next, together with a recent result from Drimbe \cite[Proposition~B]{Dr20}, one can show that for many groups, the actions constructed in \cite{IKT08} can be made strongly ergodic.

\begin{theorem}[{\!\!\cite{IKT08,Dr20}}]\label{mthm:strong}
	Let $\Gamma$ be one of the following.
	\begin{enumerate}[nolistsep]
		\item A group with property (T). 
		\item More generally, a group without the Haagerup property.
		\item A group which contains a free subgroup which is not co-amenable in $\Gamma$ (see, e.g., \cite{Ey72}).
	\end{enumerate}
	Then strongly ergodic actions of $\Gamma$ up to orbit equivalence are not classifiable by countable structures.
\end{theorem}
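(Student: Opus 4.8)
The plan is to run the construction of \cite[Section~3]{IKT08} and to check that, for the three classes of groups in the statement, it can be arranged so as to output \emph{strongly ergodic} actions. Recall what \cite{IKT08} provides for an arbitrary non-amenable $\Gamma$: a Borel family $(b(\pi))_{\pi\in\mathcal P}$ of free, ergodic (indeed weakly mixing) p.m.p. actions of $\Gamma$, indexed by a Polish space $\mathcal P$ carrying an equivalence relation $E$ that is not classifiable by countable structures, together with the verification (\cite[Lemma~3.13, Theorem~3.12]{IKT08}) that $\pi\mapsto b(\pi)$ is a Borel reduction of $E$ to orbit equivalence; this last step uses only certain rigidity/intertwining properties of the $b(\pi)$. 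Each $b(\pi)$ is obtained by a co-induction procedure: in case (3) one co-induces a $\Lambda$-action from the given free subgroup $\Lambda\le\Gamma$, and in general one co-induces along a treeable subequivalence relation of an orbit relation of $\Gamma$ supplied by the Gaboriau--Lyons theorem (as used in \cite{IKT08}). So it suffices to choose the ingredients so that, in addition, every $b(\pi)$ is strongly ergodic while still satisfying the rigidity hypotheses needed for \cite[Lemma~3.13]{IKT08}; the reduction argument then goes through verbatim and proves the theorem. Case (1) needs nothing extra: over a group with property (T) every ergodic p.m.p. action is automatically strongly ergodic (\cite{Sc80}), so the actions $b(\pi)$ are already strongly ergodic and the \cite{IKT08} reduction witnesses the conclusion directly.

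For cases (2) and (3) the new input is \cite[Proposition~B]{Dr20}, which supplies, precisely under these hypotheses, a strong ergodicity criterion for co-induced actions: roughly, the co-induction of an ergodic action along a subgroup (resp.\ along a subequivalence relation) that is \emph{not} co-amenable in $\Gamma$ (resp.\ in the ambient relation) is strongly ergodic. In case (3) one co-induces from the given free subgroup $\Lambda\le\Gamma$, which is not co-amenable by hypothesis, so \cite[Proposition~B]{Dr20} upgrades the weakly mixing actions $b(\pi)$ to strongly ergodic ones. Since (1) is subsumed by (2), in case (2) one instead uses the failure of the Haagerup property to locate, inside a suitable orbit relation of $\Gamma$, a treeable subequivalence relation that is not co-amenable, and applies \cite[Proposition~B]{Dr20} along that inclusion. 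In both cases the base $\mathbb{F}_2$-actions fed into the construction can still be chosen ergodic and with the ``remembering'' and weak mixing properties required in \cite[Section~3]{IKT08}, so $\pi\mapsto b(\pi)$ remains a Borel reduction of $E$, now with values in strongly ergodic $\Gamma$-actions.

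I expect the main obstacle to be bookkeeping rather than conceptual: one must verify that the extra constraint ``co-induce along a non-co-amenable subobject'' is simultaneously compatible with \emph{all} the properties of the base actions that \cite[Lemma~3.13]{IKT08} exploits (ergodicity of the relevant subactions, the dependence of the orbit relation on $\pi$, weak mixing), and that the resulting assignment is still Borel. The genuinely delicate point lies in case (2): extracting a non-co-amenable treeable subrelation from the mere absence of the Haagerup property is not formal, and this is exactly what \cite[Proposition~B]{Dr20} is invoked for; it is also what confines the statement to the three listed classes, since for a general non-amenable group it remains unclear whether the \cite{IKT08} actions can be made strongly ergodic.
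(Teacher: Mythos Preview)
Your treatment of cases (1) and (3) is correct and matches the paper's proof: for property~(T) groups every ergodic action is strongly ergodic, and for case~(3) one co-induces from the non-co-amenable free subgroup and applies \cite[Proposition~B]{Dr20} to obtain strong ergodicity, exactly as the paper does.

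Case~(2), however, contains a genuine gap. You propose to ``use the failure of the Haagerup property to locate, inside a suitable orbit relation of $\Gamma$, a treeable subequivalence relation that is not co-amenable'' and then invoke \cite[Proposition~B]{Dr20}. But \cite[Proposition~B]{Dr20} does not extract non-co-amenable treeable subrelations from the failure of the Haagerup property; it is a strong ergodicity criterion for co-induced actions along a given non-co-amenable inclusion. You yourself flag this step as ``not formal'', and indeed no argument is supplied for it. As written, the reduction for case~(2) does not go through.

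The paper avoids this entirely by a much more elementary observation: by a result of Jolissaint (see \cite[Chapter~2]{CCJJV} or \cite[Theorem~11.1]{Ke10}), if $\Gamma$ does \emph{not} have the Haagerup property then every \emph{mixing} p.m.p.\ action of $\Gamma$ is strongly ergodic. Since the actions $b(\pi)$ produced in \cite{IKT08} are mixing, they are automatically strongly ergodic in case~(2), and \cite[Theorem~5]{IKT08} applies directly. Thus in the paper's proof, \cite{Dr20} is only needed for case~(3), and case~(2) (which subsumes case~(1)) follows from \cite{IKT08} together with Jolissaint's theorem alone.
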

\begin{proof}
If $\Gamma$ has property (T), then every ergodic action is strongly ergodic, and more generally, if $\Gamma$ does not have the Haagerup property, then  every mixing action of $\Gamma$ is strongly ergodic by a result of Jolissaint (\!\!\!\cite[Chapter~2]{CCJJV}, see also \cite[Theorem~11.1]{Ke10}). In both cases, the result thus follows from \cite[Theorem~5]{IKT08}. If $\Gamma$ contains a non-co-amenable free group $\bF_n$, then from \cite{Io06b,IKT08}, we first of all get a family of actions of $\bF_n$ such that the family of associated co-induced actions of $\Gamma$ are not classifiable by countable structures. Using \cite[Proposition~B]{Dr20} and the fact that $\bF_n$ is not co-amenable in $\Gamma$, it moreover follows that all these actions are strongly ergodic.
\end{proof}

Finally, together with \cite[Theorem~G]{IS18} and Theorem~\ref{mthm:Schmidt}, we can deduce the following classification result for stable actions, where we say that an action is \textit{stable} if its associated orbit equivalence relation is stable.

\begin{corollary}\label{mcor:StableActions}
	Let $\Gamma$ either be a group as in Theorem~\ref{mthm:strong}, or any non-inner amenable group, and let $\Sigma$ be an infinite amenable group. Then stable actions of $\Gamma\times\Sigma$ up to orbit equivalence are not classifiable by countable structures.
\end{corollary}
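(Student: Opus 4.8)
The plan is to exhibit a Borel reduction, from an equivalence relation that is not classifiable by countable structures, to the relation of orbit equivalence on stable actions of $\Gamma\times\Sigma$. Fix once and for all a free ergodic p.m.p. action $\Sigma\act(Z,\zeta)$; since $\Sigma$ is infinite and amenable, its orbit equivalence relation is $\cR_{hyp}$ by the Ornstein--Weiss theorem. To a p.m.p. action $a$ of $\Gamma$ on $(X,\mu)$ I associate the direct product action $\Phi(a)\coloneqq a\times(\Sigma\act Z)$ of $\Gamma\times\Sigma$ on $(X\times Z,\mu\times\zeta)$; then $a\mapsto\Phi(a)$ is Borel, $\Phi(a)$ is free whenever $a$ is, and $\cR(\Phi(a))=\cR(a)\times\cR_{hyp}$, so $\Phi(a)$ is always a \emph{stable} action.

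First I would import a suitable non-classifiable Borel family of actions of $\Gamma$. If $\Gamma$ is as in Theorem~\ref{mthm:strong}, then \cite{IKT08} together with \cite{Dr20} produces a standard Borel space $T$ carrying an equivalence relation $E$ that is not classifiable by countable structures, and a Borel family $(a_t)_{t\in T}$ of free ergodic \emph{strongly ergodic} p.m.p. actions of $\Gamma$ such that $a_s$ and $a_t$ are stably orbit equivalent if and only if $s\mathrel{E}t$. If instead $\Gamma$ is only assumed non-inner amenable, then $\Gamma$ is in particular non-amenable, and \cite{IKT08} again produces such a Borel family $(a_t)_{t\in T}$ of free ergodic p.m.p. actions of $\Gamma$; in this case each $\cR(a_t)$ is \emph{non-Schmidt}, since no free ergodic p.m.p. action of a non-inner amenable group can be Schmidt.

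Next I would check that $t\mapsto\Phi(a_t)$ is a Borel reduction of $E$ to orbit equivalence on stable actions of $\Gamma\times\Sigma$, for which it suffices to show that $\Phi(a_s)$ and $\Phi(a_t)$ are orbit equivalent if and only if $a_s$ and $a_t$ are stably orbit equivalent. One direction is immediate: if $\cR(a_s)$ and $\cR(a_t)$ are stably isomorphic, then $\cR(a_s)\times\cR_{hyp}\cong\cR(a_t)\times\cR_{hyp}$. For the converse, suppose $\cR(a_s)\times\cR_{hyp}\cong\cR(a_t)\times\cR_{hyp}$. When $\Gamma$ is as in Theorem~\ref{mthm:strong}, each $\cR(a_t)$ is strongly ergodic and hence non-stable (a stable equivalence relation is never strongly ergodic), so uniqueness of the stable decomposition, \cite[Theorem~G]{IS18}, forces $\cR(a_s)$ and $\cR(a_t)$ to be stably isomorphic. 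When $\Gamma$ is non-inner amenable, each $\cR(a_t)$ is non-Schmidt and hence again non-stable (a stable equivalence relation $\cS\times\cR_{hyp}$ is Schmidt, with the $\cR_{hyp}$-factor supplying a non-trivial central sequence in its full group), so Theorem~\ref{mthm:Schmidt} applied with $\cR_1=\cR(a_s)$ and $\cR_2=\cR(a_t)$ leaves only its alternative~(1), again giving that $\cR(a_s)$ and $\cR(a_t)$ are stably isomorphic. In both cases $a_s$ and $a_t$ are then stably orbit equivalent. Since $E$ is not classifiable by countable structures and Borel-reduces to orbit equivalence on the stable actions of $\Gamma\times\Sigma$, the latter is not classifiable by countable structures either.

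The genuinely delicate point, and the one I expect to require the most care, is importing the families of \cite{IKT08,Dr20} at the level of \emph{stable} orbit equivalence rather than of plain orbit equivalence, i.e. arranging that non-$E$-equivalent members of these families are not merely non-orbit-equivalent but non-stably-orbit-equivalent. This is already implicit in those constructions, whose rigidity comes from cocycle and orbit-equivalence superrigidity arguments that control stable orbit equivalence (compare the discussion of stable orbit equivalence in \cite{GL17}); beyond this, the proof is a routine assembly of \cite[Theorem~G]{IS18}, Theorem~\ref{mthm:Schmidt}, the Ornstein--Weiss theorem, and the fact recalled above that a Schmidt free ergodic action forces the acting group to be inner amenable.
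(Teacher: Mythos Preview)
Your proposal is correct and follows essentially the same approach as the paper: fix a free ergodic $\Sigma$-action, take products with a non-classifiable Borel family of $\Gamma$-actions, and use \cite[Theorem~G]{IS18} (strongly ergodic case) or Theorem~\ref{mthm:Schmidt} (non-inner amenable case) to recover stable orbit equivalence of the $\Gamma$-factors from orbit equivalence of the products. The paper handles your ``genuinely delicate point'' about upgrading orbit inequivalence to \emph{stable} orbit inequivalence in the source family by explicitly invoking \cite[Lemma~7.4]{BHI15} in place of \cite[Lemma~3.13]{IKT08}, which is a more concrete reference than your appeal to implicit superrigidity in \cite{IKT08,GL17}.
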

\begin{proof}
First, assume $\Gamma$ is as in Theorem~\ref{mthm:strong}. Fix a free ergodic p.m.p. action $a$ of $\Sigma$ on some standard probability space $(Y,\nu)$, and let $(b_i)_{i\in I}$ be the family of pairwise non-orbit equivalent strongly ergodic actions of $\Gamma$ from Theorem~\ref{mthm:strong}. Using the rigidity result \cite[Lemma~7.4]{BHI15} instead of \cite[Lemma~3.13]{IKT08} in the proof, one can show that the actions $b_i$ are in fact pairwise not stably orbit equivalent. We claim that then the $b_i\times a$ are pairwise not orbit equivalent. Indeed, if $b_i\times a$ would be orbit equivalent to $b_j\times a$ for some $i\neq j$, it would follow from \cite[Theorem~G]{IS18} that $b_i$ is stably orbit equivalent to $b_j$, contradiction.

Second, if $\Gamma$ is not inner amenable, we can take any family of pairwise non-orbit equivalent free ergodic p.m.p. actions of $\Gamma$ (for instance from \cite{Ep07}) witnessing non-classifiability by countable structures. Observing that these actions are necessarily not Schmidt, we can thus copy the first part of the proof using Theorem~\ref{mthm:Schmidt} instead of \cite[Theorem~G]{IS18}.
\end{proof}

\subsection*{Organization of the paper} Besides the Introduction, there are three other sections in this paper. In Section~\ref{sec:prelim} we collect some necessary preliminaries. In Section~\ref{sec:int} we discuss intertwining techniques for equivalence relations, and we prove Theorems~\ref{mthm:int} and \ref{mthm:SOE}. Finally in Section~\ref{sec:Schmidt} we discuss non-Schmidt equivalence relations and prove Theorem~\ref{mthm:Schmidt}.

\subsection*{Acknowledgments} I am very grateful to both Andrew Marks and Adrian Ioana for several stimulating discussions about results and topics in or related to this paper. I would also like to thank Adrian Ioana for several useful comments on an earlier draft of this paper, and the anonymous referee for pointing out a few gaps in an earlier version, and suggesting several improvements.

\section{Preliminaries}\label{sec:prelim}

\subsection{Equivalence relations}\label{ssec:eqrel}
Suppose $(X,\mu)$ is a standard probability space, and $\Gamma\act X$ is a p.m.p. action of a countable group $\Gamma$. Denote by $\cR(\Gamma\act X)\coloneqq \{(x,y)\in X\times X\mid \exists g\in\Gamma: g\cdot x = y\}$ its \textit{orbit equivalence relation}. By construction, this is a countable p.m.p. equivalence relation. Conversely, every countable p.m.p. equivalence relation is of this form, see \cite{FM77}.

Given any countable p.m.p. equivalence relation $\cR$ on $(X,\mu)$, we denote by $[x]_\cR$ the equivalence class of a point $x\in X$. We endow $\cR\subset X\times X$ with the (infinite) measure $\bar{\mu}$ given by
\[
\bar{\mu}(A) = \int_X \#\{y\in X\mid (x,y)\in A\}\,d\mu(x),
\]
for every Borel set $A\subset X\times X$, and where $\#$ denotes the counting measure. The automorphism group $\Aut(\cR)$ of $\cR$ is the set of all measure space automorphisms $\al$ of $(X,\mu)$ such that $(\al(x),\al(y))\in\cR$ for $\bar{\mu}$-almost every $(x,y)\in\cR$. The \textit{full group} $[\cR]$ of $\cR$ is the subgroup of all $\al\in \Aut(\cR)$ such that $(x,\al(x))\in\cR$ for $\mu$-almost every $x\in X$. The \textit{full pseudogroup} $[[\cR]]$ of $\cR$ consists of all isomorphisms $\al:(A,\mu_A)\rarrow (B,\mu_B)$ satisfying $(x,\al(x))\in\cR$ for almost every $x\in A$, where $\mu_A$ and $\mu_B$ denote the restrictions of $\mu$ to the measurable subsets $A,B\subset X$.

We call $\cR$ \textit{ergodic} if every measurable set $A\subset X$ such that $\mu(\alpha(A)\Delta A)=0$ for all $\al\in [\cR]$, satisfies $\mu(A)\in\{0,1\}$. We say $\cR$ is \textit{strongly ergodic} if for every asymptotically invariant sequence of measurable sets $A_n\sub X$, i.e. satisfying $\lim\limits_{n\rarrow\infty} \mu(\alpha(A_n)\Delta A_n)=0$ for all $\alpha\in [\cR]$, we have $\lim\limits_{n\rarrow\infty} \mu(A_n)(1-\mu(A_n))=0$. Similarly, the action $\Gamma\act X$ is called ergodic, respectively strongly ergodic, if these statements hold with $\alpha\in [\cR]$ replaced by $g\in\Gamma$. We note that a p.m.p. action $\Gamma\act (X,\mu)$ is (strongly) ergodic if and only if its orbit equivalence relation $\cR(\Gamma\act X)$ is (strongly) ergodic.

Assuming $\cR$ is ergodic, we can define the $t$\textit{-amplification} $\cR^t$ of $\cR$ where $t>0$ is any real number. Let $\#$ denote the counting measure on $\N$ and consider a measurable set $X^t\subset X\times \N$ with $(\mu\times\#)(X^t)=t$. Then $\cR^t$ is defined as the equivalence relation on $X^t$ given by $((x,m),(y,n))\in\cR^t$ if and only if $(x,y)\in \cR$. Since $\cR$ is ergodic, this defines $\cR^t$ uniquely up to isomorphism.

We call two countable ergodic p.m.p. equivalence relations $\cR$ and $\cS$ \textit{stably isomorphic} if there exists $t>0$ such that $\cR\cong \cS^t$. Similarly, we say two ergodic p.m.p. actions $\Gamma\act (X,\mu)$ and $\Lambda\act (Y,\nu)$ are \textit{(stably) orbit equivalent}, if their associated orbit equivalence relations are (stably) isomorphic. Finally, we note that for any $t>0$ and $\cR$, $\cS$ as above, we have isomorphisms $\cR\times \cS\cong \cR^t\times\cS^{1/t}$.

\subsection{Tracial von Neumann algebras}

In this paper we will also work with \textit{tracial von Neumann algebras} $(M,\tau)$, i.e. $M$ is a von Neumann algebra equipped with a faithful normal tracial state $\tau:M\rarrow\C$. We denote by $\norm{x}_2 \coloneqq \sqrt{\tau(x^*x)}$ the \textit{2-norm} of $x$ and denote by $L^2(M)$ the completion of $M$ with respect to this norm. Unless stated otherwise, we will always assume $L^2(M)$ to be a separable Hilbert space, in which case we also call $M$ a separable von Neumann algebra. We will further denote by $\cU(M)$ the unitary group of $M$, and by $\Aut(M)$ the group of $\tau$-preserving automorphisms of $M$ equipped with the Polish topology of pointwise $\norm{.}_2$-convergence.

Let $P\subset M$ be a von Neumann subalgebra, which unless stated otherwise is assumed to be unital. We denote by $E_P:M\rarrow P$ the unique $\tau$-preserving conditional expectation from $M$ onto $P$. Given another subalgebra $Q\subset M$ and $\eps>0$, we write $Q\subset_{\eps}P$ if we have  $\norm{x-E_P(x)}_2\leq\eps$, for every $x\in (Q)_1$. We further denote by $P'\cap M \coloneqq \{x\in M\mid \forall y\in P: xy=yx\}$ the relative commutant of $P$ in $M$, and by $\cN_M(P)\coloneqq \{u\in\cU(M)\mid uPu^*=P\}$ the normalizer of $P$ in $M$. We say that $P$ is regular in $M$ if $\cN_M(P)''=M$, i.e. $\cN_M(P)$ generates $M$ as a von Neumann algebra. Most von Neumann algebras $M$ we encounter will be factors, i.e. the center $\cZ(M)\coloneqq M'\cap M$ satisfies $\cZ(M)=\C 1$.

\subsection{Cartan subalgebras and the von Neumann algebra of an equivalence relation}

Given a countable p.m.p. equivalence relation $\cR$, we can associate a canonical von Neumann algebra $L(\cR)\sub B(L^2(\cR,\bar{\mu}))$ to it (see \cite{FM77}). This von Neumann algebra is generated by partial isometries $u_\varphi$ for $\varphi\in [[\cR]]$, and contains a canonical copy of $L^\infty(X)$ as a Cartan subalgebra, i.e. a maximal abelian regular von Neumann subalgebra. This von Neumann algebra $L(\cR)$ is a II$_1$ factor if and only if $\cR$ is ergodic. 

Conversely, if $(M,\tau)$ is a separable tracial von Neumann algebra and $A\subset M$ is an abelian von Neumann subalgebra, we get a countable p.m.p. equivalence relation in the following way. Identify $A=L^\infty(X)$, for some standard probability space $(X,\mu)$. Then for every $u\in\cN_M(A)$, we can find an automorphism $\alpha_u$ of $(X,\mu)$ such that $a\circ\alpha_u=uau^*$, for every $a\in A$. The equivalence relation $\cR(A\subset M)$ of the inclusion $A\subset M$ is then defined to be the smallest countable p.m.p. equivalence relation on $(X,\mu)$ whose full group contains $\alpha_u$, for every $u\in\cN_M(A)$. 

Now, assume that $M$ is a II$_1$ factor and $A\subset M$ is a Cartan subalgebra. Then $\cR(A\subset M)$ is ergodic. Moreover, if $(A\subset M)\cong (L^\infty(X)\subset L(\cR))$ for some countable ergodic p.m.p. equivalence relation $\cR$, then $\cR(A\subset M)\cong \cR$, i.e. we can recover the equivalence relation $\cR$ from the von Neumann algebra inclusion $(L^\infty(X)\subset L(\cR))$. Furthermore, one can show that any Cartan inclusion $(A\subset M)$ arises as an inclusion $(L^\infty(X)\subset L_w(\cR))$ of $L^\infty(X)$ inside the ``twisted'' von Neumann algebra $L_w(\cR)$ for some 2-cocycle $w\in$ H$^2(\cR,\bT)$, see \cite{FM77}.  

For $t>0$, we define the $t$-amplification of a II$_1$ factor $M$, denoted by $M^t$, as the isomorphism class of $p(B(\ell^2)\otb M)p$, where $p\in B(\ell^2)\otb M$ is a projection with $(\text{Tr}\ot\tau)(p)=t$, and $\text{Tr}$ is the usual trace on $B(\ell^2)$. Similarly, the inclusion $(A^t\subset M^t)$ is defined as the isomorphism class of the inclusion $(p(\ell^\infty\ot A)p\subset p(B(\ell^2)\otb M)p)$, where  $p\in B(\ell^2)\otb A$ is a projection with $(\text{Tr}\ot\tau)(p)=t$, and $\ell^\infty\subset B(\ell^2)$ is the subalgebra of diagonal operators. With this notation, we then have that $A^t\subset M^t$ is a Cartan subalgebra, and $\cR(A^t\subset M^t)\cong\cR(A\subset M)^t$.

\subsection{Popa's intertwining-by-bimodules}\label{ssec:Popa}
In \cite{Po03}, Popa introduced a powerful theory for deducing unitary conjugacy of subalgebras of tracial von Neumann algebras, which we briefly recall here.

Let $P,Q$ be von Neumann subalgebras of a tracial von Neumann algebra $(M,\tau)$. We say that \textit{a corner of $P$ embeds into $Q$ inside $M$} (or, \textit{$P$ intertwines into $Q$ inside $M$}), and write $P\emb_M Q$ if we can find non-zero projections $p\in P$, $q\in Q$, a $*$-homomorphism $\theta:pPp\rarrow qQq$, and a non-zero partial isometry $v\in qMp$ satisfying $\theta(x)v=vx$, for all $x\in pPp$. Moreover, if $Pp'\emb_M Q$, for every non-zero projection $p'\in P'\cap M$, we write $P\emb_M^s Q$, and say that \textit{$P$ strongly intertwines into $Q$ inside $M$}. The main technical tool from Popa's deformation/rigidity theory is the following characterization of intertwining.

\begin{theorem}[\!\!\cite{Po03}]\label{thm:Popaint}
	Let $P,Q$ be von Neumann subalgebras of a tracial von Neumann algebra $(M,\tau)$, and let $\cU\subset\cU(P)$ be a subgroup which generates $P$. Then the following conditions are equivalent:
	\begin{enumerate}
		\item $P\emb_M Q$.
		\item There is no sequence $u_n\in \cU$ satisfying $\norm{E_Q(au_nb)}_2\rarrow 0$, for all $a,b\in M$.	
	\end{enumerate} 
\end{theorem}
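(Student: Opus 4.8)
The plan is to route everything through Popa's companion description of intertwining via the Jones basic construction, which lets us trade the (hard to manipulate) data $(p,q,\theta,v)$ for a finite‑trace projection in $P'\cap\langle M,e_Q\rangle$. Write $e_Q\in B(L^2(M))$ for the orthogonal projection onto $L^2(Q)$, $\langle M,e_Q\rangle$ for the basic construction, and $\Tr$ for its canonical semifinite trace, normalized by $\Tr(xe_Qy)=\tau(xy)$ for $x,y\in M$; put $\norm{\xi}_{2,\Tr}=\Tr(\xi^*\xi)^{1/2}$. We use freely that $e_Qxe_Q=E_Q(x)e_Q$, that $Me_QM$ spans a $\norm{\cdot}_{2,\Tr}$‑dense subspace of $L^2(\langle M,e_Q\rangle)$, and the identity $\norm{E_Q(aub)}_2=\norm{e_Q\,aub\,e_Q}_{2,\Tr}$ for $a,b,u\in M$. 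Finally we use the equivalent formulation, also due to Popa \cite{Po03}: $P\emb_M Q$ \emph{iff} there is a nonzero projection $f\in P'\cap\langle M,e_Q\rangle$ with $\Tr(f)<\infty$. It then suffices to show that the existence of such an $f$ is equivalent to~(2).

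\textbf{Existence of $f$ $\Rightarrow$ (2).} Suppose $f\in P'\cap\langle M,e_Q\rangle$ is a nonzero projection with $\Tr(f)<\infty$; then $f\in L^2(\langle M,e_Q\rangle)$ and $ufu^*=f$ for all $u\in\cU$ (since $\cU\subset\cU(P)$). Approximate $f$ in $\norm{\cdot}_{2,\Tr}$ by a finite sum $x=\sum_k a_ke_Qb_k$ with $a_k,b_k\in M$. Expanding $\langle uxu^*,x\rangle_{\Tr}$ yields a finite sum of $L^2(Q)$‑inner products of the form $\langle E_Q(gug'),E_Q(hug'')\rangle$ with $g,g',h,g''$ ranging over the finite set $G:=\{a_k,b_k^*\}$; since $\langle uxu^*,x\rangle_{\Tr}=\langle ufu^*,f\rangle_{\Tr}+o(1)=\Tr(f)+o(1)$ uniformly in $u$, taking the approximation fine enough and applying Cauchy--Schwarz forces $\max_{g,g'\in G}\norm{E_Q(gug')}_2\ge\delta$ for all $u\in\cU$, for some $\delta>0$. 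Hence no sequence $(u_n)$ in $\cU$ can satisfy $\norm{E_Q(au_nb)}_2\to0$ for all $a,b\in M$, which is~(2).

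\textbf{(2) $\Rightarrow$ existence of $f$.} We prove the contrapositive: assuming there is no nonzero finite‑trace projection in $P'\cap\langle M,e_Q\rangle$, we build a sequence witnessing the negation of~(2). Fix a finite $G\subset M$, put $S_G:=\sum_{g\in G}ge_Qg^*$ and $T_G:=\sum_{g\in G}g^*e_Qg$ (both positive, finite $\Tr$), and note the computation
\[
\sum_{g,g'\in G}\norm{E_Q(gug')}_2^2=\langle uS_Gu^*,\,T_G\rangle_{\Tr}\ \ge 0\qquad(u\in\cU).
\]
Let $\cK_G:=\overline{\operatorname{conv}}^{\,\norm{\cdot}_{2,\Tr}}\{uS_Gu^*:u\in\cU\}$ and let $h_G$ be its unique element of minimal $\norm{\cdot}_{2,\Tr}$. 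Each $u\in\cU$ acts on $L^2(\langle M,e_Q\rangle)$ by the $\norm{\cdot}_{2,\Tr}$‑isometry $\xi\mapsto u\xi u^*$, which preserves $\cK_G$, so by uniqueness $uh_Gu^*=h_G$ for all $u\in\cU$; since $\cU$ generates $P$, Kaplansky density upgrades this to $h_G\in P'\cap\langle M,e_Q\rangle$, and one checks $0\le h_G$ with $\Tr(h_G)\le\Tr(S_G)<\infty$. A spectral projection of $h_G$ would then be a nonzero finite‑trace projection in $P'\cap\langle M,e_Q\rangle$ unless $h_G=0$; so $h_G=0$, i.e. $0\in\cK_G$. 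Choosing convex combinations $c=\sum_i\lambda_iu_iS_Gu_i^*$ with $\norm{c}_{2,\Tr}$ small, the pairing $\langle c,T_G\rangle_{\Tr}=\sum_i\lambda_i\langle u_iS_Gu_i^*,T_G\rangle_{\Tr}$ is small, hence, being a convex combination of non‑negative reals, has a small term; thus $\inf_{u\in\cU}\langle uS_Gu^*,T_G\rangle_{\Tr}=0$, so $\inf_{u\in\cU}\max_{g,g'\in G}\norm{E_Q(gug')}_2=0$. Running this over an increasing sequence of finite sets $G$ exhausting a $\norm{\cdot}_2$‑dense sequence in each operator‑norm ball of $M$, with errors tending to $0$, a diagonal argument produces $(u_n)$ in $\cU$ with $\norm{E_Q(au_nb)}_2\to0$ for all $a,b\in M$, i.e. $\neg(2)$.

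\textbf{Main obstacle.} The genuinely substantial ingredient is the companion characterization $P\emb_M Q\Leftrightarrow$ [nonzero finite‑trace projection in $P'\cap\langle M,e_Q\rangle$]: producing the intertwiner $(p,q,\theta,v)$ from such a projection (and conversely) is where the "finite right $Q$‑module"/index bookkeeping lives and is the heart of Popa's theorem; I would cite \cite{Po03}, or reprove it by realizing $\operatorname{ran}(f)\subset L^2(M)$ as a $P$--$Q$ subbimodule finitely generated as a right $Q$‑module. Everything else is soft: the Kaplansky‑density upgrade from $\cU$‑invariance to $P$‑centrality of $h_G$, the $\norm{\cdot}_{2,\Tr}$‑closedness of $\{0\le y\le C\}$ (so $h_G$ is bounded), and the passage from "a convex combination is small" to an actual sequence — all handled above by collapsing the relevant data to the scalar, non‑negative quantity $\langle uS_Gu^*,T_G\rangle_{\Tr}$ and using separability of $M$.
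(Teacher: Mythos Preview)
The paper does not supply its own proof of this theorem; it is recorded in the preliminaries as a result of Popa and simply attributed to \cite{Po03}. So there is nothing to compare against on the paper's side.

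Your argument is the standard one (essentially Popa's, as reproduced in many later expositions): reduce to the basic-construction characterization $P\emb_M Q\Leftrightarrow$ [there is a nonzero projection $f\in P'\cap\langle M,e_Q\rangle$ with $\Tr(f)<\infty$], and then link that to~(2) via the $L^2(\langle M,e_Q\rangle,\Tr)$ picture. The forward step---approximating $f$ by a finite sum $\sum a_ke_Qb_k$ and using $\langle ufu^*,f\rangle_{\Tr}=\Tr(f)$ to force a uniform lower bound on some $\norm{E_Q(gug')}_2$---is correct; the expansion of $\langle uxu^*,x\rangle_{\Tr}$ indeed produces terms $\tau(b_l^*E_Q(a_l^*ua_k)b_ku^*)$, each controlled by $\norm{E_Q(a_l^*ua_k)}_2$ via Cauchy--Schwarz. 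The backward step via the closed convex hull of $\{uS_Gu^*:u\in\cU\}$, the minimal-norm fixed point, and spectral projections is the classical averaging argument and is fine; the upgrade from $\cU$-invariance to $P$-centrality is exactly where the hypothesis that $\cU$ generates $P$ is used. You have correctly isolated the one substantive black box---producing $(p,q,\theta,v)$ from a finite-$\Tr$ projection in $P'\cap\langle M,e_Q\rangle$---and noted where to find it. The remaining soft points you flag (that $h_G$ lands in the operator-norm ball of $\langle M,e_Q\rangle_+$ because that ball is $\norm{\cdot}_{2,\Tr}$-closed, and the diagonal/separability step) are routine.
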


\subsection{Finite index subequivalence relations}\label{ssec:finiteindex}
In Section~\ref{sec:int}, we will discuss Ioana's intertwining techniques for equivalence relations, which were inspired by Popa's aforementioned techniques for von Neumann algebras. For this, we will have to deal with subequivalence relations of bounded index, which we briefly recall here.

Let $\cR$ be a countable p.m.p. equivalence relation on a standard probability space $(X,\mu)$, and suppose $\cT\leq \cR$ is a subequivalence relation. Then we can decompose $X=\sqcup_{N\in\N\cup\{\infty\}} X_N$, where for every $N\in\N\cup\{\infty\}$,
\[
X_N\coloneqq \{x\in X\mid [x]_\cR \text{ is the union of } N\; \cT\text{-classes}\}
\]
is the $\cR$-invariant set consisting of all points in $X$ whose $\cR$-class contains exactly $N$ $\cT$-classes.

If $\mu(X_\infty)=0$, we say that the inclusion $\cT\leq\cR$ has \textit{(essentially) finite index}. If there exists $k\geq 1$ such that $\mu(X_N)=0$ for all $N>k$, we say that the inclusion $\cT\leq\cR$ has \textit{bounded index}.

For future reference, we record the following Lemma, which follows almost immediately from \cite[Lemma~3.3]{DHI16}.

\begin{lemma}\label{lem:DHIprodbddindex}
	Let $\cR$ on $(X,\mu)=(X_1\times X_2,\mu_1\times\mu_2)$ be a product of two countable p.m.p. equivalence relations, $\cR=\cR_1\times\cR_2$. Suppose $Y\subset X$ is a subset of positive measure, $\cT\leq (\cR_1\times\id)\rvert_Y$ is a subequivalence relation, and $\theta\in [[\cR]]$ with $\dom(\theta)=Y$ satisfies $(\theta\times\theta)(\cT)\leq (\cR_1\times\id)\rvert_{\theta(Y)}$.
	
	If $\cT\leq (\cR_1\times\id)\rvert_Y$ has bounded (respectively, essentially finite) index, then there is a sequence of $\cT$-invariant positive measure subsets $Y_n\subset Y$ with $Y=\cup_{n=1}^\infty Y_n$ such that $(\theta\times\theta)(\cT\rvert_{Y_n})\leq (\cR_1\times\id)\rvert_{\theta(Y_n)}$ has bounded (respectively, essentially finite) index for each $n\geq 1$.
\end{lemma}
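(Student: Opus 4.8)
The plan is to reduce the statement to a direct application of \cite[Lemma~3.3]{DHI16}, which should handle the case where $\cT$ itself (rather than some larger ambient relation) has bounded or essentially finite index inside $(\cR_1\times\id)\rvert_Y$. The key point is that the hypothesis ``$T\leq(\cR_1\times\id)\rvert_Y$ has bounded (resp. essentially finite) index'' together with $\cT\leq T$ (I read the $T$ in the statement as the $\cT$ already introduced, or as an ambient relation with $\cT\leq T$; in either reading $\cT$ inherits the index bound) gives that $\cT\leq(\cR_1\times\id)\rvert_Y$ has bounded (resp.\ essentially finite) index. So the real content is: given $\theta\in[[\cR]]$ with $(\theta\times\theta)(\cT)\leq(\cR_1\times\id)\rvert_{\theta(Y)}$, and $\cT$ of bounded/essentially finite index in $(\cR_1\times\id)\rvert_Y$, produce the decomposition $Y=\bigcup_n Y_n$ into $\cT$-invariant positive-measure pieces on which the pushed-forward inclusion has the same index property.

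First I would set up the framework of \cite[Lemma~3.3]{DHI16}: that lemma (in the product setting $\cR=\cR_1\times\cR_2$) says that if $\theta\in[[\cR]]$ conjugates a subequivalence relation of $(\cR_1\times\id)\rvert_Y$ into $(\cR_1\times\id)\rvert_{\theta(Y)}$, then, after partitioning the domain into countably many invariant pieces, $\theta$ decomposes coordinatewise modulo a piece living over $\cR_2$ — concretely, on each piece $\theta$ looks like $(\theta_1\times\theta_2)$ composed with something that only moves the $X_2$-coordinate within $\cR_2$-classes. The second-coordinate data is what could a priori destroy the index count: the image class $[\theta(x)]_{(\cR_1\times\id)\rvert_{\theta(Y)}}$ fixes the second coordinate, whereas $[x]_{(\cR_1\times\id)\rvert_Y}$ also fixes the second coordinate, and the obstruction to index preservation is exactly that $\theta$ might spread a single fiber $\{x\}\times\cT\text{-class}$ across several $X_2$-values. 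The decomposition from \cite[Lemma~3.3]{DHI16} is designed precisely so that on each piece this spreading is controlled (the $\cR_2$-part becomes, after refinement, graph-like or at least finite-to-one in a measurable way), so that the number of $(\cR_1\times\id)\rvert_{\theta(Y_n)}$-classes inside a $\theta$-image of an $\cR_1$-class is comparable to the number of $\cT\rvert_{Y_n}$-classes inside the original.

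Second, I would make the counting precise. Fix $n$ and a point $\theta(x)\in\theta(Y_n)$; I want to compare $\#\{$ $(\cR_1\times\id)\rvert_{\theta(Y_n)}$-classes inside $[\theta(x)]_{(\cR_1\times\id)\rvert_{\theta(Y_n)}}$ that meet $(\theta\times\theta)(\cT\rvert_{Y_n})\}$ with $\#\{$ $\cT\rvert_{Y_n}$-classes inside $[x]_{(\cR_1\times\id)\rvert_{Y_n}}\}$. Using the coordinatewise description of $\theta$ on $Y_n$ from \cite[Lemma~3.3]{DHI16}, the first-coordinate conjugation $\theta_1$ is an honest isomorphism of $\cR_1$-pieces, so it contributes a bijection; the only discrepancy comes from the $\cR_2$-component, and on the refined piece $Y_n$ that component is (by construction of the decomposition) either a bijection onto its image within the relevant fibers or at worst uniformly finite-to-one, hence multiplies the class count by a bounded factor. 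Therefore bounded index is preserved (with a possibly larger bound) on each $Y_n$, and essentially finite index is preserved since a countable-to-one map still sends ``almost everywhere finitely many classes'' to ``almost everywhere finitely many classes''. Taking the same countable partition as \cite[Lemma~3.3]{DHI16} and discarding null pieces finishes the argument.

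The main obstacle I anticipate is not any deep new idea but bookkeeping: correctly matching the statement's hypotheses (in particular disentangling the roles of $\cT$ and the index-bounded $T$, and checking $(\theta\times\theta)(\cT\rvert_{Y_n})$ really does land in $(\cR_1\times\id)\rvert_{\theta(Y_n)}$ after restriction, which needs the $Y_n$ to be $\cT$-invariant so that $\theta(Y_n)$ is $(\theta\times\theta)(\cT)$-invariant) and verifying that the countable partition supplied by \cite[Lemma~3.3]{DHI16} can be taken to simultaneously (i) be $\cT$-invariant, (ii) have positive measure pieces, and (iii) make the $\cR_2$-component controlled enough for the counting above. If \cite[Lemma~3.3]{DHI16} is stated with exactly the coordinatewise conclusion on each piece, then ``follows almost immediately'' is accurate and the proof is essentially: invoke \cite[Lemma~3.3]{DHI16} to get the partition, observe that on each piece $\theta$ acts as a product modulo $\cR_2$, and conclude that the $\cR_1$-class-counting is preserved up to the bounded contribution of the $\cR_2$-factor.
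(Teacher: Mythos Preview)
Your instinct to reduce to \cite[Lemma~3.3]{DHI16} is exactly right, and that is precisely what the paper does. However, you are overcomplicating the reduction because you are guessing at the content of \cite[Lemma~3.3]{DHI16} rather than using what it actually says. That lemma is not a structural ``coordinatewise decomposition of $\theta$'' result on which one then has to perform a separate class-counting argument; its conclusion is already exactly the index-preservation statement you want (the existence of the $\cT$-invariant partition $Y=\bigcup_n Y_n$ with bounded, respectively essentially finite, index on each image), but phrased in the setting of a group action $\Gamma\curvearrowright X$ and a subgroup $\Gamma_0\leq\Gamma$, with $\cR=\cR(\Gamma\curvearrowright X)$ and the role of $\cR_1\times\id$ played by $\cR(\Gamma_0\curvearrowright X)$.

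Accordingly, the paper's proof is a two-line translation: by Feldman--Moore, write $\cR_i=\cR(\Gamma_i\curvearrowright X_i)$, so that $\cR=\cR(\Gamma_1\times\Gamma_2\curvearrowright X)$ and $\cR_1\times\id=\cR(\Gamma_1\curvearrowright X)$; then \cite[Lemma~3.3]{DHI16} applies verbatim and gives the conclusion. All of your proposed bookkeeping about second-coordinate spreading, finite-to-one maps, and comparing class counts is work that is already packaged inside \cite[Lemma~3.3]{DHI16} and need not be redone. (Also, the ``$T$'' in the statement is simply a typo for $\cT$; there is no separate ambient relation to disentangle.)
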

\begin{proof}
	Since $\cR_1$ on $X_1$ and $\cR_2$ on $X_2$ are countable p.m.p. equivalence relations, by \cite{FM77} we can write $\cR_i=\cR(\Gamma_i\act X_i)$ for $i=1,2$. Hence $\cR=\cR(\Gamma_1\times\Gamma_2\act X)$ and $\cR_1\times\id = \cR(\Gamma_1\act X)$. We can thus apply \cite[Lemma~3.3]{DHI16}, and the result follows.
\end{proof}

\section{Intertwining equivalence relations}\label{sec:int}

Let $\cR$ be a countable p.m.p. equivalence relation on the standard probability space $(X,\mu)$. Consider a subequivalence relation $\cS\leq\cR$ on $(X,\mu)$ such that every $\cR$-class contains infinitely many $\cS$-classes. Following \cite{IKT08} we define a function $\varphi_\cS:[[\cR]]\rarrow [0,1]$ by
\[
\varphi_\cS(\theta)=\mu(\{x\in\dom(\theta)\mid (\theta(x),x)\in\cS\}).
\]

\begin{lemma}[\!\!\!{\cite[Lemma~1.7]{Io11}}]\label{lem:inteqrel}
	Let $E\sub X$ be a positive measure subset and $\cT\leq\cR\rvert_E$ a subequivalence relation. Then the following are equivalent.
	\begin{enumerate}
		\item There is no sequence $\{\theta_n\}_{n=1}^\infty\sub [\cT]$ such that $\varphi_\cS(\psi\theta_n\psi')\rarrow 0$ for all $\psi,\psi'\in [\cR]$.
		\item There exist a $\cT$-invariant subset $E'\sub E$ of positive measure and a subequivalence relation $\cT_0\leq \cT$ such that for any positive measure subset $E_0\sub E'$, there is a positive measure subset $Y\sub E_0$ and $\theta\in [[\cR]], \theta:Y\rarrow Z$, such that
		\begin{enumerate}[label=(\alph*)]
			\item $\cT_0\rvert_Y\leq \cT\rvert_Y$ has bounded index, and
			\item $(\theta\times\theta)(\cT_0\rvert_Y)\leq \cS\rvert_Z$.
		\end{enumerate}
	\end{enumerate}
\end{lemma}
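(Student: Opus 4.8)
This is a direct analogue of Popa's intertwining characterization (Theorem~\ref{thm:Popaint}), and the proof should mirror its structure, passing through the von Neumann algebra $L(\cR)$ and its Cartan subalgebra $L^\infty(X)$. The key dictionary is: a subequivalence relation $\cT\leq\cR\rvert_E$ corresponds to an intermediate von Neumann algebra $L^\infty(E)\subset L(\cT)\subset p_E L(\cR)p_E$ (where $p_E=\chi_E$), the full group $[\cT]$ (or rather the partial isometries $u_\theta$ for $\theta\in[\cT]$) gives a generating set of unitaries of $L(\cT)$, and the function $\varphi_\cS$ is essentially $\theta\mapsto\norm{E_{L(\cS)}(u_\theta)}_2^2$ since $E_{L(\cS)}$ annihilates $u_\varphi$ unless the graph of $\varphi$ lies in $\cS$. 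So the plan is: (i) translate condition (1) into the statement ``there is no sequence $u_n$ in a generating subgroup of $\cU(L(\cT))$ with $\norm{E_{L(\cS)}(a u_n b)}_2\to 0$ for all $a,b$'', (ii) invoke Theorem~\ref{thm:Popaint} to get $L(\cT)\emb_{L(\cR)} L(\cS)$, and (iii) carefully unpack the resulting intertwiner $(\theta\colon pL(\cT)p\to qL(\cS)q,\ v)$ back into the equivalence-relation data in (2), using that everything can be arranged to be compatible with the Cartan subalgebras.

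\textbf{Step-by-step.} First I would record the computation $\varphi_\cS(\psi\theta_n\psi') = \norm{E_{L(\cS)}(u_\psi u_{\theta_n} u_{\psi'})}_2^2$ and, more generally, argue that $\norm{E_{L(\cS)}(a u_{\theta_n} b)}_2\to 0$ for all $a,b\in L(\cR)$ if and only if $\varphi_\cS(\psi\theta_n\psi')\to 0$ for all $\psi,\psi'\in[\cR]$ --- the nontrivial direction uses that finite linear combinations of the $u_\psi$, $\psi\in[[\cR]]$, are $\norm{\cdot}_2$-dense in $L(\cR)$, together with the Cartan-normalization so that it suffices to test against $\psi\in[\cR]$ rather than arbitrary $a,b$ (a Fell-absorption / averaging argument, as in \cite{Po03}). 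This shows (1) is equivalent to the negation of condition (2) in Theorem~\ref{thm:Popaint} applied to $P=L(\cT)$, $Q=L(\cS)$, $M=p_E L(\cR)p_E$, with the generating subgroup $\cU=\{u_\theta:\theta\in[\cT]\}$. Next, from $L(\cT)\emb_{L(\cR)}L(\cS)$ I would extract projections $p\in L(\cT)$, $q\in L(\cS)$, a $*$-homomorphism and a partial isometry $v$ with $\theta(x)v=vx$; then, using that $L^\infty(E)$ is maximal abelian in $L(\cT)$ and that the intertwiner can be chosen so that $\theta(L^\infty(E)p)\subseteq L^\infty(X)q$ after cutting down further (a standard maneuver: the relative commutant of $\theta(L^\infty(E)p)$ in $qL(\cS)q$ contains $vv^*$, push things around to land inside the Cartan), I would produce the subsets $Y,Z$ and the element of $[[\cR]]$. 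Finally I would check the two index conditions (a), (b): bounded index of $\cT_0\rvert_Y\leq\cT\rvert_Y$ comes from $p$ being a projection in $L(\cT)$ of positive trace together with the index formula for $L(\cT_0)\subset L(\cT)$, and (b) is exactly the statement $\theta((\text{graph data})) \subseteq \cS$ read off from $\theta(L(\cT_0))\subseteq L(\cS)$.

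\textbf{The main obstacle.} The delicate point is not invoking Popa's theorem --- that is a black box --- but rather the two translation steps, and in particular ensuring the intertwiner interacts correctly with the Cartan subalgebras so that the abstract von Neumann algebraic embedding genuinely descends to subequivalence relations. One must arrange that $\theta$ maps $L^\infty$ into $L^\infty$ (so that it comes from a measure-space map, i.e. an element of $[[\cR]]$) and that the partial isometry $v$ can be taken in the normalizing groupoid; this requires cutting by well-chosen projections in $L^\infty(X)$ and is where one has to be genuinely careful. The passage from ``bounded index'' on the von Neumann algebra side (finite Jones index / finite Pimsner--Popa constant) to ``bounded index'' of subequivalence relations in the sense of Section~\ref{ssec:finiteindex} is the other spot that needs care, though it is essentially bookkeeping once the Cartan compatibility is in place. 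Since the statement is quoted from \cite[Lemma~1.7]{Io11}, I expect the author simply cites that reference rather than reproving it.
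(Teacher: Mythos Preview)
You are correct that the paper does not prove this lemma; it simply cites \cite[Lemma~1.7]{Io11}, exactly as you anticipated in your final sentence.

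Your proposed route---reduce to Theorem~\ref{thm:Popaint} and then translate the von Neumann algebraic intertwiner back through the Cartan subalgebras---is viable, but it is \emph{not} how Ioana proves it in \cite{Io11}. There, Lemma~1.7 is established directly at the measure-space level: assuming (1), one takes a weak limit in $L^2(\cR,\bar\mu)$ of (translates of) the indicator functions of the graphs of the $\theta_n$ to obtain a nonzero vector $\eta$ satisfying an $\cS$-intertwining relation, and then $\cT_0$, $E'$, $Y$, $Z$, $\theta$ are read off from the support and level sets of $\eta$, with the bounded-index condition coming from a pigeonhole argument on the fibers. No appeal to Theorem~\ref{thm:Popaint} is made. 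Your approach instead packages the hard work into the Cartan-compatibility step (iii), which is essentially the content of \cite[Lemma~1.8]{Io11} (Lemma~\ref{lem:RvLR} here). In other words, you are proposing to deduce Lemma~\ref{lem:inteqrel} from Lemma~\ref{lem:RvLR} plus Popa's theorem, whereas in \cite{Io11} the logical order is reversed: Lemma~1.7 is proved first and Lemma~1.8 is obtained separately. Either direction works, and your reduction is arguably cleaner conceptually, but one small inaccuracy: the bounded index of $\cT_0\le\cT$ does not follow from ``$p$ having positive trace''; it comes from a finiteness bound on how many $\cS$-classes a single $\cT$-class can meet through the intertwiner, which in Ioana's argument is the pigeonhole step and in your framework would have to be extracted from the finite index of the image of $\theta$ (or equivalently, from a dimension bound on the relative commutant of $\theta(pL(\cT)p)$ in $qL(\cS)q$).
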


\begin{definition}\label{def:inteqrel}
	Whenever the equivalent conditions from Lemma~\ref{lem:inteqrel} hold, we will write $\cT\emb_\cR\cS$, and say that \textit{$\cT$ intertwines into $\cS$ inside $\cR$}.
\end{definition}

The above Lemma is phrased entirely in the context of equivalence relations. However, as indicated before, it was discovered in the context of Popa's intertwining-by-bimodules techniques, from which we also borrowed the above notation and terminology. Moreover, there is the following direct correspondence between the two frameworks.

\begin{lemma}[\!\!\!{\cite[Lemma~1.8]{Io11}}]\label{lem:RvLR}
	Let $E\sub X$ be a positive measure subset and $\cT\leq\cR\rvert_E$ a subequivalence relation. Then the following are equivalent.
	\begin{enumerate}
		\item $\cT\emb_\cR \cS$, and
		\item $L(\cT)\emb_{L(\cR)} L(\cS)$.
	\end{enumerate}
\end{lemma}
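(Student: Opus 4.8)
Plan for proving Lemma~\ref{lem:RvLR} ($\cT\emb_\cR\cS \iff L(\cT)\emb_{L(\cR)}L(\cS)$).

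The strategy is to use the two ``no sequence'' characterizations on both sides — Lemma~\ref{lem:inteqrel}(1) for equivalence relations, and Theorem~\ref{thm:Popaint}(2) for von Neumann algebras — and show they match up. I would fix Feldman--Moore presentations $\cR=\cR(\Gamma\act X)$, $\cT=\cR(\Lambda\act E)$ (with $\Lambda$ acting on $E$), so that $L(\cR)$ is generated by $L^\infty(X)$ together with the unitaries $\{u_g\}_{g\in\Gamma}$, and $L(\cT)\subseteq L(\cR)\rvert_E = pL(\cR)p$ (where $p=\mathbbm 1_E$) is generated by $L^\infty(E)$ and the partial isometries $\{u_h\}_{h\in\Lambda}$. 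The key computational input is an explicit formula for the conditional expectation $E_{L(\cS)}$: for a partial isometry $u_\theta$ with $\theta\in[[\cR]]$ one has $E_{L(\cS)}(u_\theta) = u_{\theta}\cdot\mathbbm 1_{\{x\,:\,(\theta(x),x)\in\cS\}}$ (viewed as an element supported on the ``$\cS$-part'' of $\theta$), so that $\norm{E_{L(\cS)}(u_\theta)}_2^2 = \mu(\{x\in\dom\theta : (\theta(x),x)\in\cS\}) = \varphi_\cS(\theta)$. This is the dictionary translating the function $\varphi_\cS$ into a $2$-norm of a conditional expectation.

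For the implication (1)$\Rightarrow$(2) of Lemma~\ref{lem:RvLR}, I would argue contrapositively: suppose $L(\cT)\not\emb_{L(\cR)}L(\cS)$. Apply Theorem~\ref{thm:Popaint} with the generating unitary group $\cU = \{v\,u_h : h\in\Lambda,\ v\in\cU(L^\infty(E))\}$ of $L(\cT)$ (or even just the ``grouplike'' partial isometries of $[\cT]$, after unitizing): there is a sequence $w_n\in\cU$ with $\norm{E_{L(\cS)}(a w_n b)}_2\to 0$ for all $a,b\in M$. Testing against $a=u_\psi$, $b=u_{\psi'}$ with $\psi,\psi'\in[\cR]$, and using the expectation formula above together with the fact that $\psi(\cdot)\psi'$ corresponds to precomposition/postcomposition, one gets $\varphi_\cS(\psi\theta_n\psi')\to 0$ where $\theta_n\in[\cT]$ is the element of the full group underlying $w_n$ (replacing $w_n$ by its support-completed full-group element only changes things on a small set, or one works directly in $[[\cR]]$). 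Since arbitrary $a,b\in L(\cR)$ are $\norm{\cdot}_2$-approximated by finite linear combinations of the $u_\psi$'s, this yields exactly the negation of Lemma~\ref{lem:inteqrel}(1), i.e. $\cT\not\emb_\cR\cS$. The converse (2)$\Rightarrow$(1) runs the same dictionary backwards: a sequence $\theta_n\in[\cT]$ with $\varphi_\cS(\psi\theta_n\psi')\to 0$ for all $\psi,\psi'\in[\cR]$ gives unitaries $u_{\theta_n}\in L(\cT)$ with $\norm{E_{L(\cS)}(u_\psi u_{\theta_n} u_{\psi'})}_2\to 0$; a density/linearity argument (plus the bound $\norm{E_{L(\cS)}(\cdot)}_2\le\norm{\cdot}_2$ and Kaplansky-type approximation of general $a,b\in L(\cR)$ by $\cT$- and $\cR$-finitely-supported elements) upgrades this to $\norm{E_{L(\cS)}(a u_{\theta_n} b)}_2\to 0$ for all $a,b$, so Theorem~\ref{thm:Popaint} gives $L(\cT)\not\emb_{L(\cR)}L(\cS)$.

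The main obstacle I anticipate is purely bookkeeping, but genuinely delicate: the equivalence-relation side quantifies only over $[\cR]$ (and over $[\cT]$ for the $\theta_n$'s), whereas Popa's criterion quantifies over \emph{all} $a,b\in L(\cR)$ and over a unitary \emph{group} generating $L(\cT)$. Bridging this requires (i) the standard fact that $L^\infty(X)$-linear combinations of $\{u_g\}$ are $\norm{\cdot}_2$-dense in $L(\cR)$, so it suffices to test $a,b$ against such generators, and on each such generator $E_{L(\cS)}$ decomposes as a sum over the (countably many) group elements restricted to where they land in $\cS$; and (ii) handling the fact that elements of the full pseudogroup $[[\cR]]$ appearing in Lemma~\ref{lem:inteqrel} are not unitaries — one either works with $L^\infty(E)$-unitizations of $[\cT]$ inside $L(\cT)$ (legitimate since those generate $L(\cT)$) or passes to amplifications/corners. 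I expect this is exactly the content of the cited \cite[Lemma~1.8]{Io11}, so the proof I would write is essentially: ``combine Theorem~\ref{thm:Popaint}, Lemma~\ref{lem:inteqrel}, and the identity $\norm{E_{L(\cS)}(u_\theta)}_2^2=\varphi_\cS(\theta)$, together with the density of $L^\infty(X)$-span of the canonical (partial) unitaries,'' with the density/approximation step being where all the care goes.
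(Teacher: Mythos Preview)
Your approach is correct and is essentially the standard one; note, however, that the paper does not actually give its own proof of this lemma --- it is simply cited from \cite[Lemma~1.8]{Io11} --- so there is nothing to compare against in the paper itself. Your dictionary $\norm{E_{L(\cS)}(u_\theta)}_2^2=\varphi_\cS(\theta)$ together with the $L^\infty(X)$-bimodularity of $E_{L(\cS)}$ (which lets you absorb the unitary $v\in\cU(L^\infty(E))$ and reduce from $vu_\phi$ to $u_\phi$) and the density argument you sketch is exactly how the cited proof goes. The bookkeeping worries you raise are the right ones and are handled precisely as you indicate: take $\cU=\{vu_\phi: v\in\cU(L^\infty(E)),\,\phi\in[\cT]\}$ as the generating group in Theorem~\ref{thm:Popaint}, and for the density step use $\norm{(a-a')u_nb}_2\le\norm{a-a'}_2\norm{b}$ (via traciality and unitarity of $u_n$) to pass from generators to arbitrary $a,b\in L(\cR)$.
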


Note that each of the von Neumann algebras $L(\cT)$, $L(\cS)$, and $L(\cR)$ contains the canonical Cartan subalgebra $L^\infty(X)$ (cut down by the projection $1_E$ where necessary). In the product setting, this gives the following.

\begin{corollary}\label{cor:RvLRprod}
	Let $\cR_1, \cS_1, \cR_2, \cS_2$ be countable ergodic p.m.p. equivalence relations, on standard probability spaces $X_1$, $Y_1$, $X_2$, $Y_2$ respectively, such that $\cR\coloneqq\cR_1\times\cS_1\cong\cR_2\times\cS_2$. Then the following are equivalent, where we identify $\cR_i \coloneqq \cR_i\times\id_{Y_i}\leq \cR$
	\begin{enumerate}
		\item $\cR_1\emb_\cR\cR_2$,
		\item $L(\cR_1)\otb L^\infty(Y_1)\emb_{L(\cR)} L(\cR_2)\otb L^\infty(Y_2)$,
		\item $L^\infty(Y_2)\emb_{L(\cR)} L^\infty(Y_1)$.
	\end{enumerate}
\end{corollary}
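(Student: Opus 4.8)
The equivalence $(1)\iff(2)$ is immediate from Lemma~\ref{lem:RvLR}: applying that Lemma to the subequivalence relation $\cT = \cR_1 = \cR_1\times\id_{Y_1}\leq\cR$ and $\cS = \cR_2 = \cR_2\times\id_{Y_2}\leq\cR$, one has $\cR_1\emb_\cR\cR_2$ if and only if $L(\cR_1\times\id_{Y_1})\emb_{L(\cR)}L(\cR_2\times\id_{Y_2})$, and since $L(\cR_i\times\id_{Y_i}) = L(\cR_i)\otb L^\infty(Y_i)$ under the obvious identifications, this is exactly condition $(2)$. So the only real content is $(2)\iff(3)$.

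The plan is to prove $(2)\iff(3)$ using the Popa intertwining criterion, Theorem~\ref{thm:Popaint}, together with the product structure $L(\cR) = L(\cR_1)\otb L^\infty(Y_1) \cong L(\cR_2)\otb L^\infty(Y_2)$ and the fact that $L^\infty(Y_i)$ sits in the center of $L(\cR_i)\otb L^\infty(Y_i)$ (more precisely, $L^\infty(Y_i) = \C 1\otb L^\infty(Y_i)$ commutes with all of $L(\cR_i)\otb L^\infty(Y_i)$). For the implication $(3)\Rightarrow(2)$: if $L^\infty(Y_2)\emb_{L(\cR)}L^\infty(Y_1)$, then a fortiori $L^\infty(Y_2)\emb_{L(\cR)}L(\cR_2)\otb L^\infty(Y_2)$; but one needs to go the other way, so instead note that intertwining behaves well under tensoring with a common algebra: since $L(\cR) \cong L(\cR_1)\otb L^\infty(Y_1)$ and $L^\infty(Y_2)\emb_{L(\cR)}L^\infty(Y_1)$, and $L(\cR_2)\otb L^\infty(Y_2) \supseteq L^\infty(Y_2)$ is generated by $L(\cR_2)$ and $L^\infty(Y_2)$, one uses that $L(\cR_2)$ and $L^\infty(Y_1)$ together with the relative commutant generate enough — more carefully, I would argue contrapositively: if $L(\cR_1)\otb L^\infty(Y_1)$ (the whole of $L(\cR)$, essentially) does not intertwine into $L(\cR_2)\otb L^\infty(Y_2)$... but that is automatic, so the precise statement must be: regarding $\cR_1 = \cR_1\times\id \leq \cR$, condition $(2)$ says $L(\cR_1)\otb L^\infty(Y_1)\emb_{L(\cR)} L(\cR_2)\otb L^\infty(Y_2)$, and since $L(\cR_1)\otb L^\infty(Y_1)$ is \emph{all} of $L(\cR)$ when we use the first product decomposition — wait, it is not, since $\cR_1 \neq \cR_1\times\cS_1$. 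Let me restate: $L(\cR_1)\otb L^\infty(Y_1) = L(\cR_1\times\id_{Y_1})$ is a genuine subalgebra of $L(\cR) = L(\cR_1)\otb L(\cS_1)$, namely $L(\cR_1)\otb L^\infty(Y_1) \subseteq L(\cR_1)\otb L(\cS_1)$. Its relative commutant in $L(\cR)$ is $L^\infty(X_1)'\cap L(\cR_1) \otb L(\cS_1) = (\text{center of }L(\cR_1)$ over $L^\infty(X_1))\otb \cdots$; since $\cR_1$ is ergodic, $L^\infty(X_1)$ is maximal abelian so the relative commutant of $L(\cR_1)$ in itself is $\C$, hence $(L(\cR_1)\otb L^\infty(Y_1))'\cap L(\cR) = \C\otb(L^\infty(Y_1)'\cap L(\cS_1)) = \C\otb L^\infty(Y_1) = L^\infty(Y_1)$. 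This is the key structural observation.

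Given this, I would deduce $(2)\iff(3)$ as follows. For $(2)\Rightarrow(3)$: using Theorem~\ref{thm:Popaint} applied with $\cU = \cU(L^\infty(Y_2))$ (which generates $L^\infty(Y_2)$, indeed it generates $L(\cR_2)\otb L^\infty(Y_2)$ together with $\cU(L(\cR_2))$), one translates $(2)$ into: there is no sequence of unitaries $w_n$ in $L(\cR_1)\otb L^\infty(Y_1)$ with $\norm{E_{L(\cR_2)\otb L^\infty(Y_2)}(a w_n b)}_2\to 0$ for all $a,b$. I would instead run the criterion directly on the pair $L^\infty(Y_2)\subseteq L(\cR_2)\otb L^\infty(Y_2)$ and on $L^\infty(Y_1)\subseteq L(\cR_1)\otb L^\infty(Y_1)$: the point is that $L^\infty(Y_2)$ is the relative commutant of $L(\cR_2)\otb L^\infty(Y_2)$ in $L(\cR)$ (by the same ergodicity argument with the roles of the two decompositions swapped), and $L^\infty(Y_1)$ is the relative commutant of $L(\cR_1)\otb L^\infty(Y_1)$. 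A standard fact (essentially \cite[Lemma~3.5]{Va08}-type reasoning, or a direct argument) says that for a II$_1$ factor $M$ with subalgebras $P_1 = Q_1'\cap M$ and $P_2 = Q_2'\cap M$ where $Q_i$ are regular, $Q_1\emb_M Q_2 \iff P_2\emb_M P_1$; applying this with $M = L(\cR)$, $Q_i = L(\cR_i)\otb L^\infty(Y_i)$ (which are regular in $L(\cR)$ since each contains the Cartan $L^\infty(X)$ and... actually one needs $Q_i$ regular, which holds because $L^\infty(X_i\times Y_i) = L^\infty(X)$ is contained in $Q_i$ and is regular in $L(\cR)$, hence $Q_i \supseteq L^\infty(X)$ is normalized by $\cN_{L(\cR)}(L^\infty(X))$... this needs care, but morally $Q_i$ is regular), one gets $(2)\iff(3)$ directly. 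The main obstacle I anticipate is making precise the ``relative commutant swap'' lemma and checking the regularity/ergodicity hypotheses needed to identify the relative commutants as exactly $L^\infty(Y_i)$ — in particular one must use that $\cR_i$ \emph{and} $\cS_i$ are ergodic so that $L^\infty(X_i)$ resp. $L^\infty(Y_i)$ are maximal abelian in $L(\cR_i)$ resp. $L(\cS_i)$, forcing the relative commutant of $L(\cR_i)\otb L^\infty(Y_i)$ in $L(\cR_i)\otb L(\cS_i)$ to collapse to $\C\otb L^\infty(Y_i)$. Once that identification is in hand, the equivalence of the three conditions is a formal consequence of Lemma~\ref{lem:RvLR}, Theorem~\ref{thm:Popaint}, and the general ``intertwining of relative commutants'' principle.
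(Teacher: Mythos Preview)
Your approach is correct and coincides with the paper's: the equivalence $(1)\iff(2)$ is Lemma~\ref{lem:RvLR}, and for $(2)\iff(3)$ the paper simply invokes \cite[Lemma~3.5]{Va08}, which is exactly the ``intertwining of relative commutants'' principle you identify (using that $(L(\cR_i)\otb L^\infty(Y_i))' \cap L(\cR) = L^\infty(Y_i)$ by ergodicity of $\cR_i$ and maximal abelianness of $L^\infty(Y_i)$ in $L(\cS_i)$). Your write-up could be streamlined considerably---the paper dispatches the whole corollary in two sentences---but the mathematical content is the same.
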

\begin{proof}
	The equivalence between (1) and (2) is immediate from Lemma~\ref{lem:RvLR}. The equivalence between (2) and (3) follows from \cite[Lemma~3.5]{Va08}.
\end{proof}

In the von Neumann algebra setting, one can generally deduce unitary conjugacy from intertwining between different tensor factors of a given II$_1$ factor, see for instance \cite[Proposition~12]{OP03}. However, for equivalence relations, more work needs to be done. Indeed, from the above Corollary we see that the intertwining $\cR_1\emb_\cR \cR_2$ is not equivalent to $L(\cR_1)\emb_{L(\cR)} L(\cR_2)$. Rather, one also has to take the Cartan subalgebras $L^\infty(Y_i)$ into account, which leads to a much different analysis. 

Note for instance that for the equivalence between (2) and (3) in the above Corollary, we could apply the very useful result \cite[Lemma~3.5]{Va08} about the intertwining of relative commutants in the von Neumann algebra setting. Unfortunately, there is no direct analogue of this for the intertwining of equivalence relations. Nevertheless, we will be able to deduce some results of a similar flavor in the product setting (see for instance Proposition~\ref{prop:commutantish} below). 

For the remainder of this Section, we will turn to the study of intertwining results for product equivalence relations, and orbit equivalence relations arising from diagonal product actions. Firstly, we record the following easy Lemma, see for instance \cite[Lemma~7.1]{IS18}.

\begin{lemma}\label{lem:split}
	Suppose $M$, $N$ are II$_1$ factors, and $A\subset M$, $B\subset N$ are Cartan subalgebras. Then
	\[
	\cR(A\subset M)\times \cR(B\subset N) \cong \cR(A\otb B\subset M\otb N).
	\]
\end{lemma}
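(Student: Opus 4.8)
The plan is to reduce to the Feldman--Moore description of Cartan inclusions as (twisted) groupoid von Neumann algebras, and then to check that a tensor product of two such inclusions corresponds to the product of the underlying equivalence relations.

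First I would set $\cR_M\coloneqq\cR(A\subset M)$ on $(X,\mu)$ and $\cR_N\coloneqq\cR(B\subset N)$ on $(Y,\nu)$, and invoke the structure theory recalled above: there are $2$-cocycles $v\in H^2(\cR_M,\bT)$, $v'\in H^2(\cR_N,\bT)$ and isomorphisms $(A\subset M)\cong(L^\infty(X)\subset L_v(\cR_M))$ and $(B\subset N)\cong(L^\infty(Y)\subset L_{v'}(\cR_N))$. I would also record the (standard) fact that $\cR(L^\infty(X)\subset L_v(\cR_M))=\cR_M$, and likewise for $N$, since the normalizing partial isometries $u_\varphi$ ($\varphi\in[[\cR]]$) act on the Cartan subalgebra exactly as $\varphi$ regardless of the twist. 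Tensoring the two isomorphisms and using $L^\infty(X)\otb L^\infty(Y)=L^\infty(X\times Y)$, it then suffices to identify $(L^\infty(X\times Y)\subset L_v(\cR_M)\otb L_{v'}(\cR_N))$ with a twisted groupoid algebra over $\cR_M\times\cR_N$.

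The central step is to produce this identification explicitly. I would introduce the product $2$-cocycle $v\boxtimes v'$ on $\cR_M\times\cR_N$, given on composable pairs by
\[
(v\boxtimes v')\big(((x_1,y_1),(x_2,y_2)),((x_2,y_2),(x_3,y_3))\big)=v\big((x_1,x_2),(x_2,x_3)\big)\,v'\big((y_1,y_2),(y_2,y_3)\big),
\]
and check from the cocycle identities for $v$ and $v'$ that this is a genuine $\bT$-valued $2$-cocycle. Since $\overline{\mu\times\nu}=\bar\mu\times\bar\nu$ as measures on $\cR_M\times\cR_N\subset(X\times Y)^{2}$ (both assign $\bar\mu(A_1)\bar\nu(A_2)$ to a product set $A_1\times A_2$), one gets a canonical unitary $L^2(\cR_M\times\cR_N,\overline{\mu\times\nu})\cong L^2(\cR_M,\bar\mu)\ot L^2(\cR_N,\bar\nu)$, under which I expect the assignment $u_\varphi\ot u_\psi\mapsto u_{\varphi\times\psi}$ (for $\varphi\in[[\cR_M]]$, $\psi\in[[\cR_N]]$), together with the identity on $L^\infty(X\times Y)$, to extend to a $*$-isomorphism $L_v(\cR_M)\otb L_{v'}(\cR_N)\cong L_{v\boxtimes v'}(\cR_M\times\cR_N)$ carrying Cartan subalgebra to Cartan subalgebra; verifying this is a direct comparison of how the generators act on the two Hilbert spaces, accounting for the twist via the definition of $v\boxtimes v'$.

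Finally I would conclude by combining the reductions: $(A\otb B\subset M\otb N)\cong(L^\infty(X\times Y)\subset L_{v\boxtimes v'}(\cR_M\times\cR_N))$, whence $\cR(A\otb B\subset M\otb N)\cong\cR(L^\infty(X\times Y)\subset L_{v\boxtimes v'}(\cR_M\times\cR_N))=\cR_M\times\cR_N$. As a sanity check for the inclusion ``$\supseteq$'', one may note that $u\ot 1\in\cN_{M\otb N}(A\otb B)$ with $\alpha_{u\ot 1}=\alpha_u\times\id$ for every $u\in\cN_M(A)$, and symmetrically for $N$, so that $[\cR(A\otb B\subset M\otb N)]$ already contains a generating family of $[\cR_M\times\cR_N]$. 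I do not expect a conceptual obstacle here; the one place demanding genuine care is the twist/cocycle bookkeeping in the central step, together with the measure-theoretic identification of the $L^2$-spaces.
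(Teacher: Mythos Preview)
Your proposal is correct. The paper does not actually prove this lemma: it records it as an ``easy Lemma'' and defers to \cite[Lemma~7.1]{IS18} for a proof, so there is no argument here to compare against. Your route via the Feldman--Moore realization of Cartan inclusions as twisted groupoid von Neumann algebras, together with the identification $L_v(\cR_M)\otb L_{v'}(\cR_N)\cong L_{v\boxtimes v'}(\cR_M\times\cR_N)$ as Cartan inclusions, is the standard way to establish this fact, and the details you outline (the product cocycle, the identification $\overline{\mu\times\nu}=\bar\mu\times\bar\nu$, and the Hilbert space unitary carrying generators to generators) are exactly what one checks.
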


Given an intertwining between factors of product equivalence relations, we can now show the following.

\begin{proposition}\label{prop:intertwineprod}
	Let $\cR_1$, $\cS_1$, $\cR_2$, $\cS_2$ be countable ergodic p.m.p. equivalence relations on standard probability spaces $(X_1,\mu_1)$, $(Y_1,\nu_1)$, $(X_2,\mu_2)$, $(Y_2,\nu_2)$ respectively such that $\cR\coloneqq\cR_1\times\cS_1\cong\cR_2\times\cS_2$. Assume that, as subequivalence relations of $\cR$, we have $\cR_1\emb_\cR \cR_2$.
	
	Then there exists an equivalence relation $\cT$ such that $\cR_2$ is stably isomorphic to $\cR_1\times \cT$. Moreover, if $\cS_1$ is hyperfinite, then either $\cR_2$ is stably isomorphic to $\cR_1$, or $\cR_2\cong\cR_1\times\cS_1\cong\cR_2\times\cS_2$.
\end{proposition}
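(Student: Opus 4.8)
The plan is to convert the equivalence-relation intertwining $\cR_1 \emb_\cR \cR_2$ into a statement about Cartan subalgebras inside the von Neumann algebra $M \coloneqq L(\cR) \cong L(\cR_1) \otb L(\cS_1) \cong L(\cR_2) \otb L(\cS_2)$, exploiting Lemma~\ref{lem:split}, Lemma~\ref{lem:RvLR}, and Corollary~\ref{cor:RvLRprod}. By Corollary~\ref{cor:RvLRprod}, $\cR_1 \emb_\cR \cR_2$ is equivalent to $L^\infty(Y_2) \emb_M L^\infty(Y_1)$, i.e.\ a corner of the Cartan piece $B_2 \coloneqq L^\infty(Y_2)$ of the second decomposition embeds into the Cartan piece $B_1 \coloneqq L^\infty(Y_1)$ of the first. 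Writing $A_i \coloneqq L^\infty(X_i)$ and $N_i \coloneqq L(\cR_i)$, so that $M = N_1 \otb B_1 = N_2 \otb B_2$ as II$_1$ factors with $A_1 \otb B_1 = A_2 \otb B_2 = L^\infty(X)$ the Cartan subalgebra, the first step is to upgrade this intertwining to a corner-isomorphism: applying Popa's conjugacy machinery (together with \cite[Lemma~3.5]{Va08}-type arguments and the fact that $B_2$ is a direct summand of a Cartan subalgebra, hence regular and with trivial relative commutant structure we can control), we should be able to find projections and a partial isometry $v$ conjugating a corner $q B_2 q$ into $B_1$, and then after cutting down and amplifying, arrange $v^* v \in N_1' \cap M$ so that $v B_2 v^* \subseteq B_1$ with $v v^* \in B_1$. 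Unitary conjugating, we may assume $B_2 \subseteq B_1$ after suitable amplification, which since these are abelian forces a decomposition $B_1 \cong B_2 \otb C$ for some abelian $C$; correspondingly $N_2 = B_2' \cap M \supseteq B_1' \cap M = N_1$ (up to amplification), so $N_1 \subseteq N_2$, and $N_2 = N_1 \otb (N_2 \cap (N_1' \cap N_2))$. The relative commutant $N_1' \cap N_2$ is then the von Neumann algebra we want; letting $\cT$ be the equivalence relation associated to the Cartan pair $(C \subseteq N_1' \cap N_2)$ — which makes sense because Cartan subalgebras pass to tensor complements — we get $N_2 \cong N_1 \otb L(\cT)$ as Cartan inclusions, hence $\cR_2 \cong (\cR_1 \times \cT)^t$ for the amplitude $t$ that was introduced, giving the stable isomorphism.

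For the ``moreover'' clause, assume $\cS_1$ is hyperfinite, so $L(\cS_1) = L(\cR_{hyp})$ is the hyperfinite II$_1$ factor $R$ and $M = N_1 \otb R$. Now $B_1 = L^\infty(Y_1)$ is a Cartan subalgebra of $R$, and in the analysis above $B_1 \cong B_2 \otb C$. The equivalence relation $\cT$ attached to $C$ is a subequivalence relation of the hyperfinite $\cS_1$ (it arises from a Cartan subalgebra of a tensor complement inside $R$, which is again hyperfinite since subalgebras of $R$ containing a Cartan are hyperfinite), hence $\cT$ itself is hyperfinite: it is either trivial or isomorphic to $\cR_{hyp}$. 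In the first case $\cR_2$ is stably isomorphic to $\cR_1 \times \cT \cong \cR_1$ (up to the amplitude, absorbed since $\cR_1$ is ergodic), which is the first alternative. In the second case $\cR_2$ is stably isomorphic to $\cR_1 \times \cR_{hyp}$, which is already stable; since an amplification of a stable equivalence relation is isomorphic to it (the $\cR_{hyp}$ factor absorbs amplifications, using $\cR \times \cR_{hyp} \cong \cR^t \times \cR_{hyp}^{1/t}$ and $\cR_{hyp}^{1/t} \cong \cR_{hyp}$), we get $\cR_2 \cong \cR_1 \times \cR_{hyp} \cong \cR_1 \times \cS_1 \cong \cR$, and since also $\cR \cong \cR_2 \times \cS_2$ this is the second alternative.

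The main obstacle I expect is the first step: passing from the mere intertwining $L^\infty(Y_2) \emb_M L^\infty(Y_1)$ to an honest inclusion $B_2 \subseteq B_1$ (after amplification) that is compatible with the tensor-product structure, i.e.\ simultaneously conjugating so that the intertwiner normalizes $N_1$ and the resulting $v v^*$ lands in $B_1$. Unlike in the purely von Neumann algebraic setting — where \cite[Proposition~12]{OP03} or \cite[Lemma~3.5]{Va08} would handle tensor factors cleanly — here one must keep track of the Cartan subalgebras $A_1 \otb B_1 = A_2 \otb B_2$ throughout, since the conclusion is about equivalence relations, not just the ambient II$_1$ factors. Concretely, after obtaining the corner $*$-homomorphism $\theta : q B_2 q \to B_1$ with intertwiner $v$, one must argue that $v$ can be chosen with $\theta(q B_2 q) = v v^* B_1 v v^*$ (using maximal abelianness of $B_1$ inside its ``slice'' and ergodicity of $\cR_1$ to spread the projection) and that the induced identification respects $L^\infty(X)$; this bookkeeping, rather than any single hard inequality, is where the real work lies, and it is presumably why the statement is formulated as a separate Proposition with the detailed proof deferred.
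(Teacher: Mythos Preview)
Your high-level plan---reduce to a von Neumann algebraic statement, show that (a corner of) $B_2 = L^\infty(Y_2)$ can be conjugated into $B_1 = L^\infty(Y_1)$, then take relative commutants and split off $\cT$---is exactly the shape of the paper's argument, and your treatment of the ``moreover'' clause is essentially correct (minor point: $\cT$ may be \emph{finite}, not necessarily trivial, and the hyperfiniteness of $\cT$ comes from Connes' theorem applied to $N_y \subset L(\cS_1)$ together with \cite{CFW81}, not literally from $\cT$ being a subequivalence relation of $\cS_1$).

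However, there is a genuine gap in the key step, and it is more than bookkeeping. First, a computational slip: $B_i' \cap M = N_i \otb B_i$, not $N_i$, since $B_i$ is only maximal abelian in $L(\cS_i)$; this can be repaired, but it signals that the tensor splitting is not as immediate as you suggest. More seriously, starting from the bare von Neumann intertwining $B_2 \emb_M B_1$ via Popa's theorem gives you a partial isometry $v \in M$ with no reason to normalize the Cartan subalgebra $L^\infty(X) = A_1 \otb B_1 = A_2 \otb B_2$. Without this, you cannot conclude that the conjugate of $B_2$ sits compatibly inside $A_1 \otb B_1$, and the subsequent identification of Cartan inclusions (which is what you need to recover an equivalence-relation statement, not just a factor statement) breaks down.

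The paper avoids this by \emph{not} passing through Corollary~\ref{cor:RvLRprod} at all. Instead it starts directly from the equivalence-relation characterization of intertwining (Lemma~\ref{lem:inteqrel}), which produces a $\theta \in [[\cR]]$; the associated $u_\theta$ then automatically normalizes $L^\infty(X)$. The argument then proceeds by writing $\cR_1 \times \id_{Y_1}$ as a direct integral over $Y_1$, reducing (via ergodicity of the fibres $\cT_{0,y}$ and a uniformization argument) to a rectangle $W = C \times D$, and computing the relative commutant of $\int_D^\oplus p_C L(\cT_{0,y}) p_C \, d\nu_1(y)$ fibrewise. Because each $p_C L(\cT_{0,y}) p_C$ is a factor containing the maximal abelian $A_1 p_C$, its relative commutant in $p_C L(\cR_1) p_C$ is scalar, which forces $u_\theta^* B_2 u_\theta$ (cut by $p_C \otimes p_D$) into $p_C \otimes B_1 p_D$. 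This is the substitute for your ``upgrade to inclusion'' step, and it genuinely uses the equivalence-relation structure rather than abstract Popa machinery.
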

\begin{proof}
By Lemma~\ref{lem:inteqrel} we can find a subequivalence relation $\cT_0\leq \cR_1 = \cR_1\times \id_{Y_1}$, positive measure subsets $W,Z\sub X\coloneqq X_1\times Y_1$, and $\theta\in [[\cR]], \theta:W\rarrow Z$, such that
\begin{enumerate}[label=(\alph*)]
	\item $\cT_0\rvert_W\leq \cR_1\rvert_W$ has bounded index, say bounded by $k$, and
	\item $(\theta\times\theta)(\cT_0\rvert_W)\leq \cR_2\rvert_Z$.
\end{enumerate}
Note that, as a subequivalence relation of $\cR$, $\cR_1$ is a fibered equivalence relation over $Y_1$:
\[
\cR_1\times \id_{Y_1} = \dint_{Y_1} \cR_1 \,d\nu_1(y),
\]
where inside the integral we look at $\cR_1$ as an equivalence relation on $X_1$. We note that this is in fact the ergodic decomposition of $\cR_1\times \id_{Y_1}$. Given the subequivalence relation $\cT_0\leq \cR_1\times \id_{Y_1}$, we can thus write
\[
\cT_0 = \dint_{Y_1} \cT_{0,y} \,d\nu_1(y).
\]
Here for every $y\in Y_1$, $\cT_{0,y}$ is an equivalence relation on $X_1$. By (a) above, we note that moreover $\cT_{0,y}\rvert_{W_y}$ has bounded index at most $k$ inside $\cR_1\rvert_{W_y}$ for almost every $y\in Y_1$, where $W_y=\{x\in X_1\mid (x,y)\in W\}$. Since $\cR_1$ is ergodic, this implies in particular that for almost every $y\in Y_1$, the ergodic decomposition of $\cT_{0,y}\rvert_{W_y}$ is atomic with at most $k$ atoms. By restricting to further subsets if necessary, we can thus assume that for almost every $y$, $\cT_{0,y}$ is ergodic on $W_y$. Taking yet further subsets if necessary, we can also assume that there is some $c>0$ such that for every $y\in Y_1$, we have either $\mu_1(W_y)=c$ or $\mu_1(W_y)=0$. 

Fix any measurable set $C\sub X_1$ with $\mu_1(C)=c$. By ergodicity of $\cR_1$ and a standard uniformization argument, it follows that there is a measurable map $\psi:Y_1\rarrow [\cR_1]$ such that $\psi_y(W_y)=C$ whenever $\mu_1(W_y)\neq 0$. Writing $D\coloneqq\{y\in Y_1\mid \mu_1(W_y)\neq 0\}$ we can thus assume that $W=C\times D\subset X_1\times Y_1$ by composing with $\psi=\dint_{Y_1} \psi_y\,d\nu_1(y)\in [\cR]$.

Passing to the corresponding von Neumann algebras, it follows from (b) above that
\[
u_\theta (p_W L(\cT_0)p_W )u_\theta^* \sub p_Z (L(\cR_2)\otb L^\infty(Y_2))p_Z,
\]
where $p_W$ and $p_Z$ denote the projections in $L^\infty(X)$ onto $W$ and $Z$ respectively. The foregoing reasoning implies that we can assume $p_W=p_C\ot p_D\in L^\infty(X_1)\otb L^\infty(Y_1)=L^\infty(X)$ and hence the previous equation can be written as
\[
u_\theta\left( \dint_{D} p_C L(\cT_{0,y})p_C\,d\nu_1(y)\right) u_\theta^* \sub p_Z (L(\cR_2)\otb L^\infty(Y_2))p_Z.
\]
Moreover, since $\theta(W)=Z$, we have $u_\theta p_W u_\theta^*=p_Z$ and hence moving the $u_\theta$'s to the right, we get
\[
\dint_{D} p_C L(\cT_{0,y})p_C\,d\nu_1(y) \sub (p_C\ot p_D) u_\theta^* (L(\cR_2)\otb L^\infty(Y_2)) u_\theta (p_C\ot p_D).
\]
For notational convenience, we will write $A=L^\infty(X)$, $A_1=L^\infty(X_1)$, $B_1=L^\infty(Y_1)$, $A_2=L^\infty(X_2)$, and $B_2=L^\infty(Y_2)$. Also, we let $P\coloneqq u_\theta^* L(\cR_2)u_\theta$, $A_3\coloneqq u_\theta^* A_2 u_\theta$, and $B_3\coloneqq u_\theta^* B_2 u_\theta$. With this notation, the last inclusion reads
\begin{equation}\label{eq:incl1}
\dint_{D} p_C L(\cT_{0,y})p_C\,d\nu_1(y) \sub (p_C\ot p_D) (P\otb B_3) (p_C\ot p_D).
\end{equation}
Since $u_\theta$ normalizes $A$, we also have
\begin{equation}\label{eq:Cartans}
A = A_1\otb B_1 = A_2\otb B_2 = A_3\otb B_3.
\end{equation}
Now, using the fact that commutants go through direct integrals (see for instance \cite[Proposition~14.1.24]{KR97}), we get by taking relative commutants in \eqref{eq:incl1} that 
\begin{align*}
B_3 (p_C\ot p_D) &\sub \left( \dint_D p_C L(\cT_{0,y})p_C \,d\nu_1(y)\right)'\cap (p_C\ot p_D) L(\cR) (p_C\ot p_D)\\
&= \dint_D \left(p_C L(\cT_{0,y})p_C\right)'\cap (p_C L(\cR_1)\,p_C) \,d\nu_1(y).
\end{align*}
For the last equality, we also used the fact that $\dint_D p_C L(\cT_{0,y})p_C \,d\nu_1(y)$ contains the maximal abelian subalgebra $L^\infty (X)(p_C\ot p_D)$, and thus its relative commutant is contained in $L^\infty(X)(p_C\ot p_D)\subset p_C L(\cR_1)p_C\otb L^\infty(Y_1)p_D = \dint_D (p_C L(\cR_1)\,p_C) \,d\nu_1(y)$. 

Since $\cT_{0,y}\rvert_C$ is ergodic, it follows that $p_C L(\cT_{0,y})p_C$ is a factor. Moreover, this factor contains $A_1 p_C$ which is a Cartan subalgebra, and thus maximal abelian, in the II$_1$ factor $p_C L(\cR_1)p_C$. In particular, it follows that $\left(p_C L(\cT_{0,y})p_C\right)'\cap (p_C L(\cR_1)\,p_C) = \C p_C$. We conclude that
\begin{equation}\label{eq:incl2}
B_3 (p_C\ot p_D) \sub \dint_D \C p_C  \,d\nu_1(y) = p_C\otb B_1p_D.
\end{equation}
From this point, one can apply almost verbatim the end of the proof of \cite[Theorem~G]{IS18}. For the reader's convenience, and later reference, we reproduce the argument here. Thanks to \eqref{eq:incl2}, there exists a von Neumann subalgebra $B_4\subset B_1$ such that $B_3(p_C\ot p_D)=p_C\otimes B_4 p_D$. Taking relative commutants in this equality, we get
\[
p_C L(\cR_1)p_C \,\otb [(B_4 p_D)'\cap p_D L(\cS_1)p_D] = (p_C\ot p_D)(P\otb B_3)(p_C\ot p_D).
\]
In particular, since $P$ is a factor, we see that the center of the above algebra equals $B_3(p_C\ot p_D) = p_C\otimes B_4 p_D$. Identifying this center with $L^\infty(Y)$ for some probability space $(Y,\nu)$ and disintegrating in the above equality we get
\[
\dint_{Y} p_C L(\cR_1)p_C\,\otb N_y\,d\nu(y) = \dint_{Y} p_y P p_y \,d\nu(y),
\]
where we decomposed $N\coloneqq (B_4 p_D)'\cap p_D L(\cS_1)p_D =\dint_{Y} N_y\,d\nu(y)$, and $p_C\ot p_D=\dint_{Y} p_y\,d\nu(y)\in A_3\otb B_3\subset P\otb B_3$. It follows from for instance \cite[Theorem~IV.8.23]{Ta01} that the above identification splits, i.e. for almost every $y\in Y$ we necessarily have
\[
p_C L(\cR_1) p_C\,\otb N_y = p_y P p_y.
\]
Moreover, we have $B_4 p_D\subset B_1p_D\subset (B_4 p_D)'\cap p_D L(\cS_1)p_D$, so we can also decompose $B_1 p_D = \dint_{Y} B_{1,y}\,d\nu(y)\subset \dint_{Y} N_y\,d\nu(y) = N$, where $B_{1,y}\subset N_y$ is a unital inclusion for all $y$. Now $B_1\subset L(\cS_1)$ is a Cartan subalgebra, and hence so is $B_1 p_D\subset p_D L(\cS_1) p_D$. Since $B_1 p_D\subset N\subset p_D L(\cS_1) p_D$, it follows from \cite{Dy63} that also $B_1 p_D\subset N$ is a Cartan subalgebra. From \cite[Lemma~2.2]{Sp17} we then deduce that $B_{1,y}\subset N_y$ is a Cartan subalgebra for almost every $y\in Y$. Furthermore, it follows from \eqref{eq:Cartans} that
\[
\dint_{Y} A_1p_C\,\otb B_{1,y} \,d\nu(y) = \dint_{Y} A_3 p_y \,d\nu(y).
\]
This identification again splits by \cite[Theorem~IV.8.23]{Ta01}, i.e. for almost every $y\in Y$ we have $A_1p_C \,\otb B_{1,y}= A_3 p_y$.

From the above discussion we now deduce the following identification of inclusions of Cartan subalgebras, for almost every $y\in Y$:
\begin{equation}\label{eq:Cartanincl}
(A_1 p_C\,\otb B_{1,y} \subset p_C L(\cR_1) p_C\,\otb N_y) = (A_3 p_y \subset p_y P p_y).
\end{equation}
Writing $\cT_y\coloneqq\cR(B_{1,y}\subset N_y)$, $t\coloneqq\tau(p_C)$, and $s\coloneqq\tau(p_y)$, we thus get for almost every $y\in Y$:
\begin{equation}\label{eq:concl}
\cR_2^{s} \cong \cR(A_2^{s}\subset L(\cR_2)^{s}) \cong \cR(A_3^{s}\subset P^{s})\cong \cR(A_1^t\otb B_{1,y}\subset L(\cR_1)^t\otb N_y) \cong \cR_1^t \times \cT_y,
\end{equation}
where the last isomorphism follows from Lemma~\ref{lem:split}. Taking any $y$ such that the above equations hold and setting $\cT\coloneqq\cT_y$, we get that $\cR_2$ is stably isomorphic to $\cR_1\times\cT$, as desired. 

For the moreover part, assume $\cS_1$ is hyperfinite. Then $L(\cS_1)$ is the hyperfinite II$_1$ factor. Being a subalgebra of $L(\cS_1)$, $N$ is thus amenable by \cite{Co75}. Hence, $N_y$ is an amenable tracial factor for almost every $y$, and is therefore either isomorphic to $\bM_n(\C)$ for some $n\in\N$, or to the hyperfinite II$_1$ factor $R$. Choose again $y$ such that \eqref{eq:concl} holds. Then in the first case we get that $\cT_y$ is finite and hence $\cR_2$ is stably isomorphic to $\cR_1$. In the second case, $\cT_y$ arises as the equivalence relation of a Cartan inclusion $B\subset R$, and as such is a hyperfinite ergodic p.m.p. equivalence relation by \cite{CFW81}. In this case we thus have $\cR_2\cong \cR_1\times \cR_{hyp}$ where $\cR_{hyp}\cong\cS_1$ is a hyperfinite ergodic p.m.p. equivalence relation. This finishes the proof of the Proposition. 
\end{proof}

For the next Lemma, we will need the following stronger version of intertwining, which is closely tied together with strong intertwining of the corresponding von Neumann algebras. As above, let $\cR$ be a countable p.m.p. equivalence relation on $(X,\mu)$. Suppose $\cS\leq \cR$ is a subequivalence relation such that every $\cR$-class contains infinitely many $\cS$-classes. Let $E\sub X$ be a positive measure subset, and $\cT\leq \cR\rvert_E$ a subequivalence relation. 

\begin{definition}
	If for every $\cT$-invariant subset $E\sub X$ of positive measure, there is no sequence $(\theta_n)_n$ in $[\cT\rvert_E]$ such that $\varphi_\cS(\psi\theta_n\psi')\rarrow 0$ for all $\psi,\psi'\in[\cR]$, then we say that $\cT$ \textit{strongly intertwines into} $\cS$, and write $\cT\emb_\cR^s \cS$.
\end{definition}

Note that this condition is in particular implied by strong intertwining of the associated von Neumann algebras, $L(\cT)\emb_{L(\cR)}^s L(\cS)$. We now record a result which appeared in \cite{DHI16} phrased in terms of measure equivalence for groups, but whose proof applies verbatim to get the following result. 

\begin{lemma}[\!\!{\cite[Proposition~3.1]{DHI16}}]\label{lem:twoway}
	Let $\cR_1$, $\cS_1$, $\cR_2$, $\cS_2$ be countable ergodic p.m.p. equivalence relations on standard probability spaces $(X_1,\mu_1)$, $(Y_1,\nu_1)$, $(X_2,\mu_2)$, $(Y_2,\nu_2)$ respectively such that $\cR\coloneqq\cR_1\times\cS_1\cong\cR_2\times\cS_2$. Assume that, as subequivalence relations of $\cR$, we have $\cR_1\emb_\cR\cR_2$ and $\cR_2\emb_\cR^s\cR_1$. 
	
	Then there are positive measure subsets $W,Z\subset X\coloneqq X_1\times Y_1$, a subequivalence relation $\cS_0\leq \cR_2$, and $\phi\in [[\cR]]$, $\phi:Z\rarrow W$, such that
	\begin{enumerate}
		\item $\cS_0\rvert_Z\leq \cR_2\rvert_Z$ has bounded index, and
		\item $(\phi\times\phi)(\cS_0\rvert_Z)\leq \cR_1\rvert_W$ has bounded index.
	\end{enumerate}
	In particular, upon restricting to appropriate subsets, $\cR_1$ and $\cR_2$ admit isomorphic subequivalence relations of bounded index.
\end{lemma}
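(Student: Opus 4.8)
The plan is to extract intertwining data from both hypotheses via Lemma~\ref{lem:inteqrel}, and then to combine them by composition: the \emph{strong} intertwining $\cR_2\emb_\cR^s\cR_1$ will supply, essentially for free, a bounded-index subequivalence relation of $\cR_2$ that plays the role of $\cS_0$, while the ordinary intertwining $\cR_1\emb_\cR\cR_2$ will be used to upgrade the resulting containment $(\phi\times\phi)(\cS_0|_Z)\leq\cR_1|_W$ to one of bounded index. (Alternatively one can first translate everything to the von Neumann algebra side via Corollary~\ref{cor:RvLRprod}, but the direct argument seems cleaner here.)

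First I would apply Lemma~\ref{lem:inteqrel} to $\cR_2\emb_\cR^s\cR_1$ with $\cT=\cR_2$: since the intertwining is strong it holds on every $\cR_2$-invariant positive measure subset, so we may fix a subequivalence relation $\cS_0\leq\cR_2$, a positive measure set $Z\subset X$, and $\phi\in[[\cR]]$, $\phi:Z\rarrow W$, with $\cS_0|_Z\leq\cR_2|_Z$ of bounded index and $(\phi\times\phi)(\cS_0|_Z)\leq\cR_1|_W$. This already yields conclusion (1); the work is to arrange, after passing to a further positive measure subset, that the last containment has bounded index. Next I would apply Lemma~\ref{lem:inteqrel} to $\cR_1\emb_\cR\cR_2$, obtaining $\cT_0\leq\cR_1$ of bounded index and $\theta\in[[\cR]]$, $\theta:V\rarrow\theta(V)$, with $(\theta\times\theta)(\cT_0|_V)\leq\cR_2|_{\theta(V)}$. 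Shrinking $Z$ so that $Z\subseteq\theta(V)$, restricting $V$ to $\theta^{-1}(Z)$, and intersecting $(\theta\times\theta)(\cT_0|_V)$ with $\cS_0$ (which costs only bounded index, as $\cS_0\leq\cR_2$ has bounded index), we may assume $(\theta\times\theta)(\cT_0|_V)\leq\cS_0|_Z$. Finally, exactly as in the first part of the proof of Proposition~\ref{prop:intertwineprod}, the ergodicity of $\cR_1$ lets us uniformize so that $V=C\times D$ and $\cT_0=\dint_D \cT_{0,y}\,d\nu_1(y)$ with each $\cT_{0,y}$ ergodic on $C$.

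Now set $\sigma:=\phi\theta\in[[\cR]]$ and $\cQ:=(\phi\times\phi)(\cR_2|_Z)\leq\cR|_W$. By construction $(\sigma\times\sigma)(\cT_0|_V)\leq\cR_1|_W$ and $(\sigma\times\sigma)(\cT_0|_V)\leq\cQ$, so $(\sigma\times\sigma)(\cT_0|_V)\leq\cR_1|_W\cap\cQ$, while $\cT_0|_V\leq\cR_1|_V$ has bounded index. Since $(\sigma\times\sigma)(\cT_0)\leq\cR_1$ and $\cT_0\leq\cR_1$, the set $\{x\in V\mid(\sigma(x),x)\in\cR_1\}$ is $\cT_0$-invariant, hence by the fiberwise ergodicity of $\cT_0$ it equals $C\times D_0$ for some $D_0\subseteq D$. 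Assuming for now that $D_0$ has positive measure, $\sigma|_{C\times D_0}$ lies in $[[\cR_1]]$ and thus conjugates $\cR_1|_{C\times D_0}$ onto $\cR_1|_{W'}$, where $W':=\sigma(C\times D_0)\subseteq W$. Consequently $(\sigma\times\sigma)(\cT_0|_{C\times D_0})\leq\cR_1|_{W'}$ has bounded index, and since it is contained in $\cR_1|_{W'}\cap\cQ$, multiplicativity of the index gives that $\cR_1|_{W'}\cap\cQ$ has bounded index in $\cR_1|_{W'}$. Restricting to $Z':=\phi^{-1}(W')\subseteq Z$ and using that $\cS_0|_{Z'}\leq\cR_2|_{Z'}$ has bounded index (hence so does its $\phi$-conjugate inside $\cQ$), we get that $(\phi\times\phi)(\cS_0|_{Z'})\leq\cR_1|_{W'}\cap\cQ$ has bounded index, and therefore $(\phi\times\phi)(\cS_0|_{Z'})\leq\cR_1|_{W'}$ has bounded index as well. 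Together with $\cS_0|_{Z'}\leq\cR_2|_{Z'}$ of bounded index, this is precisely the assertion of the Lemma with $(Z,W,\phi)$ replaced by $(Z',W',\phi|_{Z'})$, and the final sentence then follows upon restricting $\cR_1$ and $\cR_2$ to $W'$ and $Z'$.

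The main obstacle is ruling out the degenerate possibility that $D_0$ is null, i.e. that the composition $\sigma=\phi\theta$ never sends a point into its own $\cR_1$-class. Here one exploits the freedom in Lemma~\ref{lem:inteqrel}(2) — the intertwining data $\theta$ and $\phi$ can be produced over any prescribed positive measure base set — together with a maximality argument: one takes a configuration $(\cT_0,\theta)$ maximal in an appropriate sense, and shows that if the degenerate case occurred then the $\sigma$-image could be glued in to enlarge it, a contradiction; the measurable Schr\"oder-Bernstein theorem carries out this gluing, much as in the proof of Corollary~\ref{cor:bothways}, and Lemma~\ref{lem:DHIprodbddindex} keeps all the index bounds uniform rather than merely essentially finite. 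This is exactly the step for which the intertwining $\cR_2\emb_\cR^s\cR_1$ needs to be \emph{strong}, and it is the content of \cite[Theorem~3.1]{DHI16} in the measure-equivalence formulation, whose proof applies here verbatim.
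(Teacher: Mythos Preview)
Your proposal and the paper's proof ultimately agree: both defer the essential content to \cite[Theorem~3.1]{DHI16}. The paper's proof is a single sentence --- the argument of \cite[Theorem~3.1]{DHI16} applies verbatim once one substitutes Lemma~\ref{lem:DHIprodbddindex} for \cite[Lemma~3.3]{DHI16} --- and you end up in the same place, since you explicitly hand off the ``degenerate case'' ($D_0$ null) to \cite{DHI16} as well. So there is no genuinely different route to compare.

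That said, your extended sketch of the non-degenerate part has a presentational wrinkle worth fixing. You first apply the strong intertwining $\cR_2\emb_\cR^s\cR_1$ to produce $(\phi,Z,W,\cS_0)$ and \emph{then} apply $\cR_1\emb_\cR\cR_2$ to get $(\theta,V,\cT_0)$, after which you write ``shrinking $Z$ so that $Z\subseteq\theta(V)$''. At that point there is no reason $Z$ and $\theta(V)$ meet. The correct order is the reverse: first extract $(\theta,V,\cT_0)$ from $\cR_1\emb_\cR\cR_2$, then use the \emph{strong} intertwining --- which, via Lemma~\ref{lem:inteqrel}(2), lets you choose the base set $E_0$ freely inside the relevant $\cR_2$-invariant set --- to arrange $Z\subset\theta(V)$. (Concretely: take the $\cR_2$-saturation of $\theta(V)$, apply strong intertwining there, and intersect back; fiberwise ergodicity of $\cR_2=\cR_2\times\id_{Y_2}$ ensures the intersection has positive measure.) Once the ordering is corrected, your chain of index bounds for the non-degenerate case is fine; but since the degenerate case is precisely where the real work in \cite{DHI16} lies, your write-up does not add to the paper's one-line citation.
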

\begin{proof}
	The proof of \cite[Proposition~3.1]{DHI16} applies verbatim, upon using Lemma~\ref{lem:DHIprodbddindex} from the preliminaries instead of \cite[Lemma~3.3]{DHI16} where it appears in the proof.
\end{proof}

We will use this result to establish the following Proposition for product equivalence relations, which, albeit not equivalent, can be seen as an analogue of the intertwining of relative commutants in the von Neumann algebra setting (cf. \cite[Lemma~3.5]{Va08}). 

\begin{proposition}\label{prop:commutantish}
	Let $\cR_1$, $\cS_1$, $\cR_2$, $\cS_2$ be countable ergodic p.m.p. equivalence relations on standard probability spaces $(X_1,\mu_1)$, $(Y_1,\nu_1)$, $(X_2,\mu_2)$, $(Y_2,\nu_2)$ respectively such that $\cR\coloneqq\cR_1\times\cS_1\cong\cR_2\times\cS_2$. Assume that, as subequivalence relations of $\cR$, we have $\cR_1\emb_\cR\cR_2$ and $\cR_2\emb_\cR\cR_1$.
	
	Then 
	\begin{enumerate}
		\item also $\cS_1\emb_\cR \cS_2$ and $\cS_2\emb_\cR \cS_1$, and
		\item $\cR_1$ is stably isomorphic to $\cR_2$, and $\cS_1$ is stably isomorphic to $\cS_2$
	\end{enumerate}
\end{proposition}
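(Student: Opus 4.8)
The goal is to upgrade Corollary~\ref{cor:bothways} (which gives $\cR_1$ stably isomorphic to $\cR_2$) to the ``complementary'' statement that $\cS_1$ is stably isomorphic to $\cS_2$. The natural strategy is to transfer the hypothesis from the first coordinate to the second. First I would observe that, by Corollary~\ref{cor:RvLRprod}, the two-sided intertwining $\cR_1\emb_\cR\cR_2$ and $\cR_2\emb_\cR\cR_1$ translates (on the von Neumann algebra side) into $L^\infty(Y_2)\emb_{L(\cR)}L^\infty(Y_1)$ and $L^\infty(Y_1)\emb_{L(\cR)}L^\infty(Y_2)$, i.e. the two Cartan ``$Y$-factors'' intertwine into each other. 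I would like to conclude that this forces $\cS_1\emb_\cR\cS_2$ and $\cS_2\emb_\cR\cS_1$, after which a second application of Corollary~\ref{cor:bothways}, with the roles of the $\cR_i$'s and $\cS_i$'s exchanged, would finish the proof. So the crux is to pass from intertwining of the $X$-factors (equivalently, of the complementary Cartan subalgebras) to intertwining of the $\cS_i$'s themselves.

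\textbf{Key steps.} (1) Use Corollary~\ref{cor:RvLRprod}, applied twice with the two factorizations swapped, to record that $\cR_1\emb_\cR\cR_2$ and $\cR_2\emb_\cR\cR_1$ are together equivalent to $L^\infty(Y_1)\emb_{L(\cR)}L^\infty(Y_2)$ and $L^\infty(Y_2)\emb_{L(\cR)}L^\infty(Y_1)$. (2) Upgrade one of these, say $\cR_2\emb_\cR\cR_1$, to a \emph{strong} intertwining: since $\cR_2\times\id_{Y_2}$ has trivial relative commutant-type behavior inside $\cR$ only on the ergodic pieces, one argues as in the proof of Proposition~\ref{prop:intertwineprod} that after passing to the ergodic decomposition over $Y_2$, the fibers are ergodic, so bounded-index intertwining on a subset automatically propagates; concretely, any non-zero corner of $\cR_2$ intertwines into $\cR_1$, giving $\cR_2\emb_\cR^s\cR_1$. (Alternatively, invoke that $L(\cR_2)\otb L^\infty(Y_2)$ has no direct summand embeddable into $L^\infty(Y_1)$-corner unless the whole thing does, using factoriality of $L(\cR_2)$.) (3) Now apply Lemma~\ref{lem:twoway} with the hypotheses $\cR_1\emb_\cR\cR_2$, $\cR_2\emb_\cR^s\cR_1$: this produces positive measure subsets $W,Z\subset X$, a bounded-index subequivalence relation $\cS_0\leq\cR_2\rvert_Z$, and $\phi\in[[\cR]]$, $\phi:Z\to W$, with $(\phi\times\phi)(\cS_0\rvert_Z)\leq\cR_1\rvert_W$ also of bounded index. (4) Take relative commutants: inside $\cR$, the ``partner'' of $\cR_2\times\id$ is $\id\times\cS_2$, and of $\cR_1\times\id$ is $\id\times\cS_1$. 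Conjugating by $\phi$ and using that bounded-index inclusions have (up to amplification) the same relative commutant in $\cR$—this is exactly the equivalence-relation analogue of \cite[Lemma~3.5]{Va08}, and for equivalence relations it can be extracted from the factoriality of the $L(\cR_i)$-pieces together with \eqref{eq:Cartans}-type Cartan bookkeeping as in Proposition~\ref{prop:intertwineprod}—one obtains that a corner of $\cS_2$ is carried by $\phi\times\phi$ into $\cS_1$, i.e. $\cS_2\emb_\cR\cS_1$. (5) By the symmetry of the hypotheses ($\cR_1\leftrightarrow\cR_2$), the same argument gives $\cS_1\emb_\cR\cS_2$. (6) Finally, apply Corollary~\ref{cor:bothways} with the factorization $\cR=\cS_1\times\cR_1\cong\cS_2\times\cR_2$ (just reordering the two tensor/product factors, which is allowed since the product is symmetric) and the two-sided intertwining $\cS_1\emb_\cR\cS_2$, $\cS_2\emb_\cR\cS_1$: this yields that $\cS_1$ is stably isomorphic to $\cS_2$.

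\textbf{Main obstacle.} The delicate point is Step (4): extracting from the bounded-index data of Lemma~\ref{lem:twoway} an honest intertwining $\cS_2\emb_\cR\cS_1$ of the \emph{complementary} factors. Unlike in the von Neumann algebra world, where \cite[Lemma~3.5]{Va08} hands this over directly, for equivalence relations one must do the ``commutant'' computation by hand. The right way is to mimic the relative-commutant disintegration argument already used in the proof of Proposition~\ref{prop:intertwineprod}: pass to $L(\cR)$, use that $\cS_0\rvert_Z\leq\cR_2\rvert_Z$ of bounded index forces $L(\cS_0\rvert_Z)\subset L(\cR_2)\rvert_Z$ with $L(\cS_0\rvert_Z)'\cap L(\cR)\rvert_Z$ a finite-index extension of $(L(\cR_2)\rvert_Z)'\cap L(\cR)\rvert_Z = L^\infty(Z)\otb$(corner of $L(\cS_2)$), and similarly on the $W$-side; then $\phi$ conjugates one relative commutant into the other, and peeling off the Cartan $L^\infty$ part via \cite[Lemma~2.2]{Sp17} and \cite{Dy63} (exactly as in Proposition~\ref{prop:intertwineprod}) leaves a corner of $\cS_2$ landing inside $\cS_1$ up to an element of $[[\cR]]$, which is precisely $\cS_2\emb_\cR\cS_1$ by Lemma~\ref{lem:inteqrel}. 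I expect this bookkeeping—keeping track of which Cartan subalgebra sits where under $\phi$, and that bounded index on both sides keeps the amplifications finite—to be the only genuinely technical part; everything else is a direct appeal to the lemmas already assembled.
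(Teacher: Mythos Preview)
Your Steps (1)--(3) coincide with the paper's proof: the upgrade $\cR_2\emb_\cR^s\cR_1$ is obtained from regularity of $L(\cR_2)\otb L^\infty(Y_2)$ in $L(\cR)$ via \cite[Lemma~2.4(3)]{DHI16}, and then Lemma~\ref{lem:twoway} produces $\phi:Z\to W$ with bounded-index inclusions on both sides.

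The gap is in Step (4). You assert that $(L(\cR_2\times\id_{Y_2})\rvert_Z)'\cap qL(\cR)q$ is ``$L^\infty(Z)\otb\text{(corner of }L(\cS_2))$''. This is incorrect: since $L(\cR_2\times\id_{Y_2})=L(\cR_2)\otb L^\infty(Y_2)$ already contains the Cartan subalgebra $L^\infty(X_2)\otb L^\infty(Y_2)$, its relative commutant in $L(\cR)$ sits inside that Cartan and is therefore just its center $L^\infty(Y_2)$ (by factoriality of $L(\cR_2)$). The same applies to $L(\cS_0\rvert_Z)$ once the fibers over $Y_2$ are made ergodic: its commutant in $qL(\cR)q$ is its center $L^\infty(Y_2)q$. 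So the relative commutant never sees $L(\cS_2)$, and your mechanism for producing $\cS_2\emb_\cR\cS_1$ does not go through as written.

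What the paper does is precisely this center computation. The conjugacy $u_\phi\,qL(\cS_0)q\,u_\phi^* = pL(\cR_0)p$ identifies the centers, yielding $L^\infty(Y_1)p = u_\phi\,L^\infty(Y_2)q\,u_\phi^*$. This is an honest unitary conjugacy of abelian subalgebras (not merely an intertwining), so it transports the associated equivalence relations of the inclusions: $\cR(L^\infty(Y_1)p\subset pMp)\cong\cR(L^\infty(Y_2)q\subset qMq)$. These are $(\id_{X_1}\times\cS_1)\rvert_W$ and $(\id_{X_2}\times\cS_2)\rvert_Z$ respectively, and a disintegration over $X_1$ (resp.\ $X_2$) via \cite[Theorem~IV.8.23]{Ta01} extracts fiberwise isomorphisms $\cS_1^t\cong\cS_2^s$. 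Your Steps (5)--(6), routing through two-sided intertwining of the $\cS_i$'s and Corollary~\ref{cor:bothways}, would work once the conjugacy of centers is established, but are then an unnecessary detour: the stable isomorphism of $\cS_1$ and $\cS_2$ drops out directly from the center conjugacy without a Schr\"oder--Bernstein argument.
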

\begin{proof}
	Firstly, we observe that we in fact have that $\cR_2\emb_\cR^s\cR_1$. Indeed, by Corollary~\ref{cor:RvLRprod}, $\cR_2\emb_\cR\cR_1$ is equivalent to the fact that $L(\cR_2)\otb L^\infty(Y_2)\emb_{L(\cR)} L(\cR_1)\otb L^\infty(Y_1)$. Since $L(\cR_2)\otb L^\infty(Y_2)$ is regular inside $L(\cR)$, this implies that $L(\cR_2)\otb L^\infty(Y_2)\emb_{L(\cR)}^s L(\cR_1)\otb L^\infty(Y_1)$ by \cite[Lemma~2.4(3)]{DHI16}, and therefore also $\cR_2\emb_\cR^s\cR_1$.
	
	By Lemma~\ref{lem:twoway}, we can thus find positive measure subsets $W,Z\subset X$, a subequivalence relation $\cS_0\leq \cR_2$, and $\phi\in [[\cR]]$, $\phi:Z\rarrow W$, such that $\cS_0\rvert_Z\leq \cR_2\rvert_Z$ has bounded index $k$, and $\cR_0\coloneqq (\phi\times\phi)(\cS_0\rvert_Z)\leq \cR_1\rvert_W$ has bounded index $l$. As in the beginning of the proof of Proposition~\ref{prop:intertwineprod}, we can write 
	\[
	\cR_0 = \dint_{Y_1} \cR_{0,y}\,d\nu_1(y),
	\]
	and
	\[
	\cS_0 = \dint_{Y_2} \cS_{0,y}\,d\nu_2(y),
	\]
	such that for almost every $y$ the ergodic decomposition of $\cR_{0,y}\rvert_{W_y}$ is atomic with at most $l$ atoms, and the ergodic decomposition of $\cS_{0,y}\rvert_{Z_y}$ is atomic with at most $k$ atoms. By taking further subsets if necessary, we can thus assume that $\cR_{0,y}\rvert_{W_y}$ and $\cS_{0,y}\rvert_{Z_y}$ are ergodic. For $u=u_\phi$, $p=\proj_W$, $q=\proj_Z$, we then get
	\begin{equation}\label{eq:vNaeq}
	pL(\cR_0)p = u\,qL(\cS_0)q\,u^*,
	\end{equation}
	where we can write 
	\[
	pL(\cR_0)p = \dint_{Y_1} p_yL(\cR_{0,y})p_y\,d\nu_1(y),
	\]
	and
	\[
	qL(\cS_0)q = \dint_{Y_2} q_yL(\cS_{0,y})q_y\,d\nu_2(y).
	\]
	Moreover, by the above we have that $p_yL(\cR_{0,y})p_y$ and $q_yL(\cS_{0,y})q_y$ are factors for almost every $y$. Therefore the center of $pL(\cR_0)p$ equals $L^\infty(Y_1)p$ and the center of $qL(\cS_0)q$ equals $L^\infty(Y_2)q$. From \eqref{eq:vNaeq}, it then follows that 
	\begin{equation}\label{eq:LinftyY}
	L^\infty(Y_1)p = u\, L^\infty(Y_2)q\,u^*.
	\end{equation}
	In particular, through conjugating by $u$, we get
	\[
	\cR(L^\infty(Y_1)p\subset pL(\cR)p) \cong \cR(L^\infty(Y_2)q\subset qL(\cR)q).
	\]
	Since $\cR(L^\infty(Y_1)p\subset pL(\cR)p)\cong (\id_{X_1}\times \cS_1)\rvert_W$ and $\cR(L^\infty(Y_2)q\subset qL(\cR)q)\cong (\id_{X_2}\times \cS_2)\rvert_Z$, we get on the von Neumann algebra level a trace preserving automorphism
	\begin{equation}\label{eq:intS}
	\dint_{X_1} p_x L(\cS_1) p_x\,d\mu_1(x) = p(L^\infty(X_1)\otb L(\cS_1))p \cong q(L^\infty(X_2)\otb L(\cS_2))q = \dint_{X_2} q_x L(\cS_2) q_x\,d\mu_2(x),
	\end{equation}
	implemented by $u$, and in such a way that also the canonical Cartan subalgebras are identified. Here we decomposed $p=\dint_{X_1} p_x \,d\mu_1(x)$ and $q=\dint_{X_2} q_x \,d\mu_2(x)$. 
	
	Firstly, this implies in particular that $L^\infty(X_1)\otb L(\cS_1)\emb_{L(\cR)} L^\infty(X_2)\otb L(\cS_2)$ and vice versa. From Corollary \ref{cor:RvLRprod}, we thus deduce that $\cS_1\emb_\cR \cS_2$ and $\cS_2\emb_\cR \cS_1$. 
	
	We proceed by proving from \eqref{eq:intS} that $\cS_1$ is stably isomorphic to $\cS_2$. By reversing the roles of $\cR_i$ and $\cS_i$, the same reasoning will then apply to $\cR_1$ and $\cR_2$.
	
	Applying \cite[Theorem~IV.8.23]{Ta01} (see also \cite[Theorem~1.13]{Sp17}) to \eqref{eq:intS}, we can find null sets $N_1\sub X_1$, $N_2\sub X_2$, a Borel isomorphism $\Phi:X_2\setminus N_2\rarrow X_1\setminus N_1$, with $\Phi(\mu_2)$ equivalent to $\mu_1$, such that the above isomorphism decomposes into tracial isomorphisms $p_x L(\cS_1) p_x\cong q_{\Phi^{-1}(x)} L(\cS_2) q_{\Phi^{-1}(x)}$. Moreover, by the above reasoning, these isomorphisms can be chosen in such a way that also the Cartan subalgebras $L^\infty(Y_1)p_x$ and $L^\infty(Y_2)q_{\Phi^{-1}(x)}$ are identified. Taking some $x\in X_1\setminus N_1$, we thus get
	\[
	\cS_1^t\cong \cR(L^\infty(Y_1)p_x\subset p_x L(\cS_1) p_x)\cong \cR(L^\infty(Y_2)q_{\Phi^{-1}(x)}\subset q_{\Phi^{-1}(x)} L(\cS_2) q_{\Phi^{-1}(x)})\cong \cS_2^s,
	\]
	where we wrote $t=\tau(p_x)$, and $s=\tau(q_{\Phi^{-1}(x)})$. In other words, $\cS_1$ is stably isomorphic to $\cS_2$, finishing the proof of the Proposition.
\end{proof}

\begin{remark}
	On the von Neumann algebra level, this gives the following result in the setting of Proposition~\ref{prop:commutantish}: If $L(\cR_1)\otb L^\infty(Y_1)\emb_{L(\cR)} L(\cR_2)\otb L^\infty(Y_2)$ and $L(\cR_2)\otb L^\infty(Y_2)\emb_{L(\cR)} L(\cR_1)\otb L^\infty(Y_1)$, then, as established in the above proof, $L^\infty(X_1)\otb L(\cS_1)\emb_{L(\cR)} L^\infty(X_2)\otb L(\cS_2)$ and $L^\infty(X_2)\otb L(\cS_2)\emb_{L(\cR)} L^\infty(X_1)\otb L(\cS_1)$. Furthermore, one gets that $\cR_1$ is stably isomorphic to $\cR_2$. Hence $L(\cR_1)$ is stably isomorphic to $L(\cR_2)$, and similarly $L(\cS_1)$ is stably isomorphic to $L(\cS_2)$. 
\end{remark}

For the remainder of this section, we consider diagonal product actions. Using the established results, this will in particular lead to a proof of Theorem~\ref{mthm:SOE}. For convenience we briefly recall the framework established in the Introduction.

\restatenotation*

\begin{proposition}\label{prop:intertwinediag}
	In the setting of Notation~\ref{not:notation}, suppose the actions $\Gamma_1\act (X_1\times Y_1,\mu_1\times\nu_1)$ and $\Gamma_2\act (X_2\times Y_2,\mu_2\times\nu_2)$ generate the same orbit equivalence relation $\cR$ (i.e. the actions are orbit equivalent). Assume that, as subequivalence relations of $\cR$, we have $\cR_1\times \id_{Y_1}\emb_\cR \cR_2\times\id_{Y_2}$.
	
	Then $\cR_1$ is stably isomorphic to a subequivalence relation of $\cR_2$.
	
	Moreover, if additionally $\cR_2\times \id_{Y_2}\emb_\cR \cR_1\times\id_{Y_1}$, then $\cR_1$ is in fact stably isomorphic to $\cR_2$.
\end{proposition}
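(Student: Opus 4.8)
The plan is to mimic the structure of the proof of Proposition~\ref{prop:intertwineprod}, but now exploiting the extra structure coming from the diagonal product action, in particular the ergodicity of $\Gamma_i^0 \act (X_i,\mu_i)$. Writing $\cR = \cR(\Gamma_1 \act X_1\times Y_1) = \cR(\Gamma_2\act X_2\times Y_2)$, note that $\cR_1\times\id_{Y_1} = \cR(\Gamma_1^0\act X_1\times Y_1)$ is exactly the orbit equivalence relation of the subgroup $\Gamma_1^0\leq\Gamma_1$ acting diagonally (which on the second coordinate is trivial since $\rho_1(\Gamma_1^0)=\{e\}$), and similarly for $\cR_2\times\id_{Y_2}$. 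So the hypothesis $\cR_1\times\id_{Y_1}\emb_\cR\cR_2\times\id_{Y_2}$ fits the framework of Lemma~\ref{lem:inteqrel}.

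First I would apply Lemma~\ref{lem:inteqrel} to obtain a subequivalence relation $\cT_0\leq \cR_1\times\id_{Y_1}$, positive measure subsets $W,Z\subset X\coloneqq X_1\times Y_1$, and $\theta\in[[\cR]]$, $\theta:W\rarrow Z$, with $\cT_0\rvert_W\leq (\cR_1\times\id_{Y_1})\rvert_W$ of bounded index and $(\theta\times\theta)(\cT_0\rvert_W)\leq (\cR_2\times\id_{Y_2})\rvert_Z$. Next, exactly as in the first part of the proof of Proposition~\ref{prop:intertwineprod}, I would use the fibered (ergodic) decomposition $\cR_1\times\id_{Y_1} = \dint_{Y_1}\cR_1\,d\nu_1(y)$ and $\cT_0 = \dint_{Y_1}\cT_{0,y}\,d\nu_1(y)$: the bounded index condition forces each $\cT_{0,y}\rvert_{W_y}$ to have atomic ergodic decomposition with boundedly many atoms, so after restricting to further subsets (and using ergodicity of $\cR_1$ plus a uniformization argument) I may assume $W = C\times D$ for a fixed $C\subset X_1$ with $\mu_1(C)=c$ and $D\subset Y_1$, and that $\cT_{0,y}\rvert_C$ is ergodic for $y\in D$. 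Passing to von Neumann algebras, I get $u_\theta\big(\dint_D p_C L(\cT_{0,y})p_C\,d\nu_1(y)\big)u_\theta^*\subset p_Z(L(\cR_2)\otb L^\infty(Y_2))p_Z$, with $u_\theta p_W u_\theta^* = p_Z$. Taking relative commutants, and using that $p_C L(\cT_{0,y})p_C$ is a factor containing the Cartan $A_1 p_C$ of $p_C L(\cR_1)p_C$ (so the relative commutant is $\C p_C$), I would conclude $B_3(p_C\ot p_D)\subset p_C\otb B_1 p_D$, i.e. $B_3(p_C\ot p_D) = p_C\otimes B_4 p_D$ for some $B_4\subset B_1$, where $B_3 = u_\theta^* L^\infty(Y_2)u_\theta$. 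Taking commutants once more yields $p_C L(\cR_1)p_C\otb N = (p_C\ot p_D)(P\otb B_3)(p_C\ot p_D)$ with $P = u_\theta^* L(\cR_2)u_\theta$ and $N = (B_4 p_D)'\cap p_D L(\cS_1)p_D$, and disintegrating over the center (which is $B_3(p_C\ot p_D)$, since $P$ is a factor) and using the splitting theorem \cite[Theorem~IV.8.23]{Ta01} together with \cite[Lemma~2.2]{Sp17} to keep track of the Cartan subalgebras, I arrive at identifications of Cartan inclusions $(A_1 p_C\otb B_{1,y}\subset p_C L(\cR_1)p_C\otb N_y) = (A_3 p_y\subset p_y P p_y)$ for a.e. $y$. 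By Lemma~\ref{lem:split} this gives $\cR_2^{s}\cong \cR_1^{t}\times\cT_y$ for suitable $t,s>0$; in particular $\cR_1^{t}$, and hence $\cR_1$ (up to stable isomorphism), is isomorphic to a subequivalence relation of $\cR_2^{s}$, and after re-amplifying, to a subequivalence relation of $\cR_2$.

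The main obstacle, and the reason this is stated as a separate proposition rather than a direct corollary of Proposition~\ref{prop:intertwineprod}, is that in Notation~\ref{not:notation} the second factor $\cS_1 = \cR(\Sigma_1\act Y_1)$ need not be hyperfinite, so one cannot invoke the ``moreover'' part of Proposition~\ref{prop:intertwineprod}; one only gets the weaker conclusion that $\cR_1$ stably embeds into $\cR_2$. I should double-check that the ergodicity hypotheses genuinely in force here — namely ergodicity of $\Gamma_i^0\act X_i$, which is what makes $\cR_i = \cR(\Gamma_i^0\act X_i)$ ergodic and hence the amplifications $\cR_i^t$ well-defined and the fibered/ergodic decomposition arguments applicable — and that $\cR_2$ itself is ergodic so the statement ``subequivalence relation of $\cR_2$'' is unambiguous up to the stable isomorphism already built in. Modulo these bookkeeping points, the argument is a faithful transcription of (the first half of) the proof of Proposition~\ref{prop:intertwineprod}, with $\cS_2 = \cR(\Sigma_2\act Y_2)$ playing the role of the second tensor factor on the target side.
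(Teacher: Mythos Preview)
There is a genuine gap. Your first half is fine and indeed coincides with the paper's: the fibered decomposition of $\cR_1\times\id_{Y_1}$ over $Y_1$, the bounded-index/ergodicity reduction to a product set $W=C\times D$, and the relative-commutant computation giving $B_3(p_C\ot p_D)=p_C\ot B_4 p_D$ for some $B_4\subset B_1=L^\infty(Y_1)$ all go through verbatim in the diagonal setting. The problem is the second half, where you write
\[
p_C L(\cR_1)p_C\;\otb\;N \;=\; (p_C\ot p_D)(P\otb B_3)(p_C\ot p_D),\qquad N\coloneqq(B_4 p_D)'\cap p_D L(\cS_1)p_D.
\]
This identity relies on the tensor splitting $M=L(\cR_1)\otb L(\cS_1)$, which is exactly what is \emph{not} available here: in Notation~\ref{not:notation} the ambient algebra is $M=(L^\infty(X_1)\otb L^\infty(Y_1))\rtimes\Gamma_1$, and $\cR$ is the orbit relation of a diagonal action, not a product $\cR_1\times\cS_1$. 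There is no tensor factor ``$L(\cS_1)$'' in $M$, so the commutant $(p_C\ot B_4 p_D)'\cap pMp$ has no reason to split as $p_C L(\cR_1)p_C$ tensored with anything, and your subsequent disintegration leading to $p_C L(\cR_1)p_C\otb N_y=p_yPp_y$ (hence $\cR_2^s\cong\cR_1^t\times\cT_y$) is unjustified. Your final paragraph misidentifies the obstruction as the possible non-hyperfiniteness of $\cS_1$; the real obstruction is structural, not a matter of which ``moreover'' clause applies.

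The paper circumvents this by settling for an \emph{inclusion} rather than an equality. On the $\Gamma_2$ side one still has the honest identity $(B_3p)'\cap pMp=p(P\otb B_3)p$, since $L^\infty(Y_2)'\cap M=L(\cR_2)\otb L^\infty(Y_2)$ holds for the diagonal action. On the $\Gamma_1$ side one only uses $B_4\subset B_1$ to get
\[
p_C L(\cR_1)p_C\;\otb\;B_1 p_D \;=\;(B_1p)'\cap pMp\;\subset\;(B_4 p)'\cap pMp\;=\;p(P\otb B_3)p.
\]
Disintegrating over the center $B_3p$ then yields only $p_C L(\cR_1)p_C\subset p_yPp_y$ for a.e.~$y$ (with Cartans matched), which is precisely the statement that $\cR_1$ is stably isomorphic to a subequivalence relation of $\cR_2$. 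This is the weaker conclusion actually claimed in the proposition; your stronger conclusion $\cR_2^s\cong\cR_1^t\times\cT_y$ is not available in this setting.
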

\begin{remark}\label{rk:assumptions}
	We note that by Lemma~\ref{lem:RvLR}, the condition  $\cR_1\times \id_{Y_1}\emb_\cR \cR_2\times\id_{Y_2}$ is equivalent to $L(\cR_1)\otb L^\infty(Y_1)\emb_{L(\cR)} L(\cR_2)\otb L^\infty(Y_2)$, where $L(\cR_i) = L^\infty(X_i)\cross\Gamma_i^0$. Here we used the fact that, by construction, $\Gamma_i^0$ acts trivially on $Y_i$. Taking relative commutants, we see that this is moreover equivalent to $L^\infty(Y_2)\emb_{L(\cR)} L^\infty(Y_1)$. We note that if this condition would instead be imposed on the $X_i$'s, one can exploit the fact that $\Gamma_i$ acts \textit{freely} on $X_i$. Indeed, in that case $L^\infty(X_i)$ is maximal abelian inside $L^\infty(X_i)\cross\Gamma_i$, and 
	$L^\infty(X_i\times Y_i)$ is maximal abelian inside $L(\cR)$, from which one can---from the intertwining condition on the $L^\infty(X_i)$---deduce that $L^\infty(X_1\times Y_1)$ and $L^\infty(X_2\times Y_2)$ are unitarily conjugate, after which one can reduce the arguments to a purely measure theoretic setting. See for instance \cite[Theorem~4.1]{Io06a}, and \cite[Theorem~4.2]{Dr20v1} for some results in this direction for such diagonal actions. However, $L^\infty(Y_i)$ is not maximal abelian in $L^\infty(Y_i)\cross\Gamma_i$, and so in the proof below we have to take a different approach, similar to the product case considered above.
\end{remark}
\begin{proof}[Proof of Proposition~\ref{prop:intertwinediag}]
The proof follows much the same lines as the proofs of Proposition~\ref{prop:intertwineprod} and Proposition~\ref{prop:commutantish}, so we only indicate the main differences. Firstly, since $\Gamma_1^0$ acts trivially on $Y_1$, we note that as a subequivalence relation of $\cR$, we can indeed write $\cR_1\times\id_{Y_1}$ as a fibered equivalence relation over $Y_1$:
\[
\cR_1\times \id_{Y_1} = \dint_{Y_1} \cR_1\,d\nu_1(y).
\]
A similar statement holds for $\cR_2$. In particular, writing $M=L(\cR)=(L^\infty(X_1)\otb L^\infty(Y_1))\cross\Gamma_1=(L^\infty(X_2)\otb L^\infty(Y_2))\cross\Gamma_2$, with the Cartan subalgebras identified, we have on the von Neumann algebra level that
\[
L^\infty(Y_1)'\cap M = (L^\infty(X_1)\cross\Gamma_1^0)\otb L^\infty(Y_1) = L(\cR_1)\otb L^\infty(Y_1),
\]
and
\[
L^\infty(Y_2)'\cap M = (L^\infty(X_2)\cross\Gamma_2^0)\otb L^\infty(Y_2)=L(\cR_2)\otb L^\infty(Y_2).
\]
Given the aforementioned facts, we can now apply verbatim the first half of the proof of Proposition~\ref{prop:intertwineprod} to find that, up to conjugating by a unitary which normalizes $L^\infty(X_1\times Y_1)$, there exists a projection $p=p_C\ot p_D\in L^\infty(X_1)\otb L^\infty(Y_1)$ such that (cf. \eqref{eq:incl2})
\begin{equation}\label{eq:incY}
	L^\infty(Y_2)(p_C\ot p_D)\subset p_C\ot L^\infty(Y_1) p_D,
\end{equation} 
and thus a von Neumann subalgebra $B_4\subset L^\infty(Y_1)$ such that 
\begin{equation*}
L^\infty(Y_2)(p_C\ot p_D)=p_C\ot B_4 p_D.
\end{equation*}
Taking relative commutants yields
\begin{equation}\label{eq:prime}
(p_C\ot B_4 p_D)'\cap pMp = p(L(\cR_2)\otb L^\infty(Y_2))p.
\end{equation}
Now since $B_4\sub L^\infty(Y_1)$, we see that 
\begin{equation}\label{eq:LR1}
	(L^\infty(Y_1)p)'\cap pL(\cR)p = p_CL(\cR_1)p_C\,\otb\, L^\infty(Y_1)p_D\sub (p_C\ot B_4 p_D)'\cap pMp.	
\end{equation}
Identifying the center of the algebras in \eqref{eq:prime} with $L^\infty(Y)=L^\infty(Y_2)(p_C\ot p_D)=p_C\ot B_4 p_D$ for some probability space $(Y,\nu)$, and disintegrating over this center, we get
\[
\dint_Y N_y\,d\nu(y) = \dint_Y p_y L(\cR_2)p_y\,d\nu(y),
\]
where we wrote $N\coloneqq (p_C\ot B_4 p_D)'\cap pMp = \dint_Y N_y\,d\nu(y)$ and $p_C\ot p_D = \dint_Y p_y\,d\nu(y)$. From~\eqref{eq:LR1}, it follows that $p_CL(\cR_1)p_C\sub N_y$ for almost every $y\in Y$. Moreover, the canonical Cartan subalgebras are still identified (cf. the proof of Proposition~\ref{prop:intertwineprod}). Using once more \cite[Theorem~IV.8.23]{Ta01}, we thus conclude that $\cR_1$ is stably isomorphic to a subequivalence relation of $\cR_2$.

For the moreover part, we can follow a strategy similar to the proof of Proposition~\ref{prop:commutantish}. For this, one observes that Lemma~\ref{lem:twoway} also holds if $\cR_1,\cR_2\leq \cR$ are interpreted in the setting of Notation~\ref{not:notation}, instead of as being parts of a product equivalence relation. To see this, one can run the same proof as \cite[Proposition~3.1]{DHI16}, and observe that \cite[Lemma~3.3]{DHI16} holds for the diagonal actions from Notation~\ref{not:notation} as well. Indeed, one can replace the group $\Gamma_2$ in that proof by a set of coset representatives for $\Gamma_1$, and observe that normality of $\Gamma_1$ is sufficient for the proof. 

Using this, we now run the proof of Proposition~\ref{prop:commutantish} until equation~\eqref{eq:LinftyY} to find a unitary $u\in L(\cR)$ normalizing $L^\infty(X)$ and projections $p,q\in L^\infty(X)$ such that
\[
L^\infty(Y_1)p = u L^\infty(Y_2)q u^*.
\]
Taking relative commutants, we deduce that
\[
\dint_{Y_1} p_yL(\cR_1)p_y\,d\nu_1(y) = p(L(\cR_1)\otb L^\infty(Y_1))p \cong q(L(\cR_2)\otb L^\infty(Y_2))q = \dint_{Y_2} q_yL(\cR_2)q_y\,d\nu_2(y),
\]
where we decomposed $p = \dint_{Y_1} p_y\,d\nu_1(y)$ and $q = \dint_{Y_2} q_y\,d\nu_2(y)$. The final part of the proof of Proposition~\ref{prop:commutantish} now applies to get the desired result.
\end{proof}

\begin{proposition}\label{prop:int}
	Assume the setting of Notation~\ref{not:notation}. Furthermore assume that $\Sigma_2$ is amenable, $\Gamma_1\act (X_1,\mu_1)$ is strongly ergodic, and the actions $\Gamma_1\act (X_1\times Y_1,\mu_1\times\nu_1)$ and $\Gamma_2\act (X_2\times Y_2,\mu_2\times\nu_2)$ generate the same orbit equivalence relation $\cR$. Then $\cR_1\times \id_{Y_1}\emb_\cR \cR_2\times\id_{Y_2}$.
\end{proposition}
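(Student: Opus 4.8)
The plan is to follow the strategy that, in the language of this paper, underlies the proof of \cite[Theorem~G]{IS18}: one reduces the asserted intertwining to an intertwining statement between abelian subalgebras of $M\coloneqq L(\cR)$, and then establishes the latter by contradiction, playing the amenability of $\Sigma_2$ against the strong ergodicity of $\Gamma_1\act X_1$.

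\emph{Reduction.} By Lemma~\ref{lem:RvLR} it suffices to prove $L(\cR_1\times\id_{Y_1})\emb_M L(\cR_2\times\id_{Y_2})$. As in Remark~\ref{rk:assumptions} and the opening of the proof of Proposition~\ref{prop:intertwinediag}, the hypotheses that $\Gamma_i\act X_i$ is free and $\Gamma_i^0\act X_i$ is ergodic give $L^\infty(Y_i)'\cap M=L(\cR_i)\otb L^\infty(Y_i)=L(\cR_i\times\id_{Y_i})$, and this subalgebra is regular in $M$ since $\Gamma_i^0\trianglelefteq\Gamma_i$. Hence, by the relative-commutant argument recalled in Remark~\ref{rk:assumptions} (i.e. \cite[Lemma~3.5]{Va08}), the intertwining above is equivalent to $L^\infty(Y_2)\emb_M L^\infty(Y_1)$, and it is this that I would prove.

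\emph{Setting up the contradiction.} Suppose $L^\infty(Y_2)\not\emb_M L^\infty(Y_1)$. By Popa's criterion (Theorem~\ref{thm:Popaint}, applied with the generating unitary subgroup $\cU(L^\infty(Y_2))$) there is a sequence of unitaries $u_n\in L^\infty(Y_2)$ with $\norm{E_{L^\infty(Y_1)}(xu_ny)}_2\rarrow 0$ for all $x,y\in M$. Two structural observations then come into play. First, because $\Gamma_2^0$ acts trivially on $Y_2$ and ergodically on $X_2$, one has $L^\infty(Y_2)=\cZ(N)$ for $N\coloneqq L(\cR_2\times\id_{Y_2})=L^\infty(X_2\times Y_2)\cross\Gamma_2^0$; in particular every $u_n$ commutes with $N$, so conjugation by $u_n$ only affects the ``$\Sigma_2$-direction'' obtained by decomposing $M$ over $N$ according to the cosets of $\Gamma_2^0$ in $\Gamma_2$. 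Second, since $\Sigma_2=\Gamma_2/\Gamma_2^0$ is amenable, the inclusion $N\sub M$ is co-amenable, so one may fix a Følner sequence $(F_k)_k$ in $\Sigma_2$, lifted to $\Gamma_2$, for which the averaging maps $\tfrac1{\abs{F_k}}\sum_{s\in F_k}\Ad(v_s)$ are asymptotically $\Ad(\Gamma_2)$-invariant on $\norm{\cdot}_2$-bounded subsets of $M$.

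\emph{The core argument, and the main obstacle.} The remaining task is to combine these two facts into a contradiction with the strong ergodicity of $\Gamma_1\act X_1$, following the equivalence-relation analogue of the end of the proof of \cite[Theorem~G]{IS18}. Passing to spectral projections one may first replace $(u_n)$ by indicator functions $1_{B_n}$ of $(\cR_2\times\id_{Y_2})$-invariant sets $B_n\sub X$ with $\mu(B_n)$ bounded away from $0$ and $1$, still satisfying the escaping condition for all $x,y\in M$. One then symmetrizes $B_n$ over $F_{k(n)}$ for a suitable diagonal choice $k(n)\rarrow\infty$: since the $B_n$ are already $(\cR_2\times\id_{Y_2})$-invariant and the $F_k$ are Følner, the symmetrized sets become asymptotically invariant under the full $\Gamma_2$-action, hence under $[\cR]$ (as $\Gamma_2$ generates $\cR$; cf. \cite{JS85}), and in particular under $\Gamma_1$; finally, using once more that $\Gamma_1^0\act X_1$ is ergodic together with the identification $L(\cR_1\times\id_{Y_1})=L^\infty(Y_1)'\cap M$, one transports such a sequence onto a nontrivial asymptotically $\Gamma_1$-invariant sequence in $L^\infty(X_1)$, contradicting strong ergodicity. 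I expect the genuine difficulty to be precisely this diagonal step: because $\Sigma_2\act Y_2$ is ergodic, the Følner averages pull $1_{B_n}$ toward the scalar $\mu(B_n)1$, so one must exploit the \emph{full} strength of Popa's criterion --- that the escaping holds simultaneously for all $x,y\in M$, i.e. ``at every scale'' --- to choose $k(n)$ growing slowly enough that the symmetrized sets stay non-negligible yet fast enough that they become $\Gamma_2$-, hence $[\cR]$-, invariant, and then to verify that the resulting sequence does not degenerate when pushed into $L^\infty(X_1)$. This trade-off, rather than any single estimate, is the heart of the proof.
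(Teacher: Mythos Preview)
Your reduction to showing $L^\infty(Y_2)\emb_M L^\infty(Y_1)$ is correct and coincides with the paper. After that point, however, the paper takes a direct route rather than a contradiction argument with F\o lner averaging: amenability of $\Sigma_2$ makes the orbit relation of $\Sigma_2\act Y_2$ hyperfinite, so one identifies $(Y_2,\nu_2)$ with $(\{0,1\},\tfrac12(\delta_0+\delta_1))^{\N}$ carrying the tail relation, and the tail subalgebras $B_n=\otb_{k\geq n}L^\infty(\{0,1\})$ satisfy $\prod_\omega B_n\subset M'\cap L^\infty(X)^\omega$. Strong ergodicity of $\Gamma_1\act X_1$ then gives $M'\cap L^\infty(X)^\omega\subset L^\infty(Y_1)^\omega$, hence $\prod_\omega B_n\subset L^\infty(Y_1)^\omega$; an appeal to \cite[Lemma~2.3]{IS18} yields $B_n\emb_M L^\infty(Y_1)$ for large $n$, and since $B_n\subset L^\infty(Y_2)$ has finite index this gives $L^\infty(Y_2)\emb_M L^\infty(Y_1)$.

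Your scheme, by contrast, has a genuine gap precisely at the step you flagged. First, the passage to ``escaping projections $1_{B_n}$ with $\mu(B_n)$ bounded away from $0$'' is impossible as stated: taking $x=y=1$ in Popa's criterion forces $\norm{E_{L^\infty(Y_1)}(1_{B_n})}_2\to 0$, but since constants lie in $L^\infty(Y_1)$ one has $\norm{E_{L^\infty(Y_1)}(1_{B_n})}_2\geq\tau(1_{B_n})=\mu(B_n)$, so necessarily $\mu(B_n)\to 0$. Second, and more structurally, even if you work with centered elements, ergodicity of $\Sigma_2\act Y_2$ forces the F\o lner averages of any bounded $a\in L^\infty(Y_2)$ to converge in $L^2$ to the scalar $\int a\,d\nu_2$, and nothing in the escaping hypothesis controls the \emph{rate} of this convergence; there is thus no diagonal choice $k(n)\to\infty$ that keeps the averages asymptotically central while also keeping them bounded away from zero in $\norm{\cdot}_2$. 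The paper circumvents this entirely by producing asymptotically central \emph{subalgebras} (the tails $B_n$) rather than trying to centralize individual escaping elements. Finally, the right way strong ergodicity enters is through the inclusion $M'\cap L^\infty(X)^\omega\subset L^\infty(Y_1)^\omega$; there is no ``transport into $L^\infty(X_1)$''.
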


\begin{proof}
Write $M\coloneqq L(\cR)$ and $X\coloneqq X_1\times Y_1 = X_2\times Y_2$. As observed in Remark~\ref{rk:assumptions}, it suffices to show that $L^\infty(Y_2)\emb_M L^\infty(Y_1)$. Note that we can assume without loss of generality that $Y_2$ is non-atomic. To show the intertwining condition, we first observe that since $\Sigma_2$ is amenable and $\Sigma_2\act (Y_2,\nu_2)$ is free, ergodic, and p.m.p., the associated orbit equivalence relation is a hyperfinite ergodic p.m.p. equivalence relation. Thus, we can identify $(Y_2,\nu_2)=(Y_0,\nu_0)^{\mathbb N}$, where $Y_0=\{0,1\}$ and $\nu_0=\frac{1}{2}(\delta_0+\delta_1)$, in such a way that $(y_k)_k$ and $(z_k)_k$ belong to the same orbit if and only if $y_k=z_k$ for all but finitely many $k\in\mathbb N$. We can then identify $L^\infty(Y_2)=\otb_{k\in\mathbb N}L^{\infty}(Y_0,\nu_0)_k$, and we put $B_{n}=\otb_{k\geq n}L^{\infty}(Y_0,\nu_0)_k$, for $n\in\mathbb N$. Fixing an ultrafilter $\omega$ on $\N$, it is then not hard to show that
\[
\prod_\omega B_{n}\subset M'\cap L^\infty(X)^\omega,
\]
see for instance the first half of the proof of \cite[Lemma~6.1]{IS18}. Moreover, by strong ergodicity of the action of $\Gamma_1$ on $X_1$, we also have
\[
M'\cap L^\infty(X)^\omega\subset L^\infty(Y_1)^\omega. 
\]
Combining both inclusions, we thus get $\prod_\omega B_{n}\subset L^\infty(Y_1)^\omega$. Applying \cite[Lemma~2.3]{IS18} then gives a sequence $\eps_n\rarrow 0$ such that $B_{n}\subset_{\eps_n} L^\infty(Y_1)$. In particular $B_{n}\emb_M L^\infty(Y_1)$ for large enough $n$. Since $B_{n}$ has finite index in $L^\infty(Y_2)$, this also gives $L^\infty(Y_2) \emb_M L^\infty(Y_1)$ and thus finishes the proof of the Proposition.
\end{proof}

\begin{proof}[Proof of Theorem~\ref{mthm:SOE}]
From Proposition~\ref{prop:int} applied twice, we get that $\cR_1\times\id_{Y_1}\emb_\cR \cR_2\times\id_{Y_2}$ and $\cR_2\times\id_{Y_2}\emb_\cR \cR_1\times\id_{Y_1}$. The moreover assertion of Proposition~\ref{prop:intertwinediag} then implies the result. 
\end{proof}

\section{(Non-)Schmidt equivalence relations}\label{sec:Schmidt}

In this section we consider (non-)Schmidt equivalence relations and prove Theorem~\ref{mthm:Schmidt}. First of all, identifying elements of $[[\cR]]$ with partial isometries in $L(\cR)$, and denoting by $\norm{.}_2$ the 2-norm on $L(\cR)$ corresponding to the canonical trace, we can give a ``local'' characterization of the Schmidt property, similar to the local characterization of stability in \cite[Theorem~2.1]{Ma17}.

\begin{lemma}\label{lem:local}
	Let $\cR$ be an ergodic p.m.p. equivalence relation on a standard probability space $(X,\mu)$. Then the following are equivalent:
	\begin{enumerate}[label=(\arabic*)]
		\item $\cR$ is Schmidt.
		\item For every finite set $F\subset [[\cR]]$ and every $\eps>0$, there exists $v\in [[\cR]]$ such that
		\[
		\forall u\in F:\quad \norm{vu-uv}_2 < \eps \norm{v-\id\rvert_{\dom(v)}}_2.
		\]
	\end{enumerate}
\end{lemma}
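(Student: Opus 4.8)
The plan is to translate both statements into the tracial von Neumann algebra $L(\cR)$ with its canonical trace $\tau$, using the standard dictionary $\norm{u_T-1}_2^2=2\mu(\{x\mid T(x)\neq x\})$ for $T\in[\cR]$, $\norm{u_Tu_S-u_Su_T}_2^2=2\mu(\{x\mid TS(x)\neq ST(x)\})$ for $T,S\in[\cR]$, and $\norm{u_\varphi-\id\rvert_{\dom\varphi}}_2^2=2\mu(\{x\in\dom\varphi\mid\varphi(x)\neq x\})$ for $\varphi\in[[\cR]]$. Thus (1) says there is a sequence $(T_n)$ in $[\cR]$ with $\norm{u_{T_n}u_S-u_Su_{T_n}}_2\rarrow0$ for all $S\in[\cR]$ and $\liminf_n\norm{u_{T_n}-1}_2>0$, while (2) is literally a statement about the partial isometries $u_\varphi$, $\varphi\in[[\cR]]$, in $L(\cR)$.

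For (1)$\Rightarrow$(2) I would simply take $v=u_{T_n}$ for $n$ large: then $\norm{v-\id\rvert_{\dom v}}_2=\norm{u_{T_n}-1}_2\geq c>0$ for all large $n$ (with $c^2=\liminf_n 2\mu(\{x\mid T_n(x)\neq x\})$), so it only remains to prove $\norm{u_{T_n}w-wu_{T_n}}_2\rarrow0$ for every fixed $w\in[[\cR]]$. Writing $w=u_\varphi$ with $\varphi\colon C\rarrow D$ and extending $\varphi$ to some $\tilde\varphi\in[\cR]$ (possible since $\mu(C)=\mu(D)$ and $\cR$ is ergodic, so one also finds a partial isomorphism $C^c\rarrow D^c$ in $[[\cR]]$), one has $u_\varphi=u_{\tilde\varphi}1_C=1_Du_{\tilde\varphi}$ with $1_Du_{\tilde\varphi}=u_{\tilde\varphi}1_C$, whence $\norm{[u_{T_n},u_\varphi]}_2\leq\norm{[u_{T_n},u_{\tilde\varphi}]}_2+\norm{[u_{T_n},1_C]}_2$. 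The first term tends to $0$ by centrality. For the second, $\norm{[u_{T_n},1_A]}_2=\mu(T_n(A)\Delta A)^{1/2}$, so everything comes down to the following point, which I regard as the only genuinely new input in this direction: \emph{a central sequence in $[\cR]$ is asymptotically trivial on the measure algebra}, i.e. $\mu(T_n(A)\Delta A)\rarrow0$ for every measurable $A$. To prove this, suppose not; after passing to a subsequence, $\mu(A\setminus T_n(A))\geq\delta>0$ for all $n$. By Poincar\'e recurrence a.e. $\cR$-orbit meets $A$ infinitely often, so $\cR\rvert_A$ is aperiodic and we may choose $S\in[\cR]$ equal to the identity off $A$ and fixed-point free on $A$; then for $x\in A\setminus T_n(A)$ one computes $(T_nST_n\inv)(x)=x\neq S(x)$, giving $\mu(\{x\mid T_nS(x)\neq ST_n(x)\})\geq\delta$, contradicting centrality. (A routine density argument then even gives $\norm{[u_{T_n},\cdot]}_2\rarrow0$ on all of $L(\cR)$, but only the above is needed.)

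For (2)$\Rightarrow$(1), fix countable uniformly dense families $\{S_i\}\subset[\cR]$ and $\{B_i\}$ of measurable sets, apply (2) to $F_k=\{u_{S_1},\dots,u_{S_k},1_{B_1},\dots,1_{B_k}\}$ with $\eps_k=1/k$ to get $v_k\in[[\cR]]$, and note that $\norm{v_k-\id\rvert_{\dom v_k}}_2\leq\sqrt2$ forces $\norm{[v_k,u]}_2\leq\sqrt2\,\eps_k\rarrow0$ for each fixed $u$ appearing in some $F_k$; hence $(v_k)$ is an asymptotically central sequence of partial isometries in $L(\cR)$ that, by our choice of test sets, also asymptotically commutes with $L^\infty(X)$. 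Two things then remain: making the sequence non-trivial, and replacing partial isometries by elements of $[\cR]$. If $\liminf_k\norm{v_k-\id\rvert_{\dom v_k}}_2>0$ this is essentially immediate; the delicate case is $\delta_k:=\norm{v_k-\id\rvert_{\dom v_k}}_2\rarrow0$, where one must \emph{amplify}: restrict $v_k$ to its fixed-point free part $w_k$ on $E_k=\{x\in\dom v_k\mid v_k(x)\neq x\}$ (of measure $\tfrac12\delta_k^2$), and, using aperiodicity of $\cR$ and a Rokhlin-type argument, choose roughly $\delta_k^{-2}$ elements of $[\cR]$ conjugating $w_k$ to partial isometries with pairwise disjoint supports whose union is almost all of $X$, then glue these into an element $\widehat v_k\in[\cR]$ with $\mu(\{x\mid\widehat v_k(x)\neq x\})$ bounded below; running this over $k$ produces a non-trivial central sequence in $[\cR]$, so $\cR$ is Schmidt. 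The main obstacle is exactly this amplification step — turning a possibly tiny, merely \emph{relatively} almost-central partial isometry into a genuinely non-trivial, almost-central element of the full group while keeping all commutation errors under control: each conjugate copy commutes with $F_k$ only up to $\eps_k\delta_k$ and there are $\approx\delta_k^{-2}$ of them, so one has to set up the copies (and their matching at the Cartan level) carefully so that the accumulated error still tends to $0$. The relative normalisation built into (2) and the aperiodicity of $\cR$ are the two features that make this possible.
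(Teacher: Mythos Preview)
Your argument for $(1)\Rightarrow(2)$ is correct and in fact more careful than the paper's, which simply asserts that a large enough $T_n$ works without addressing why a central sequence in $[\cR]$ asymptotically commutes with arbitrary elements of $[[\cR]]$; your reduction to asymptotic triviality on the measure algebra fills this in cleanly.

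The direction $(2)\Rightarrow(1)$, however, has a genuine gap in the amplification step, and the ``essentially immediate'' case is also not as immediate as you suggest. For the amplification: if you conjugate $w_k$ by $\psi_j\in[\cR]$, then $\norm{[\psi_j w_k\psi_j^{-1},u]}_2=\norm{[w_k,\psi_j^{-1}u\psi_j]}_2$, and $\psi_j^{-1}u\psi_j$ is not in $F_k$, so this is uncontrolled. You could try to include the $\psi_j$'s in $F_k$ beforehand, which would give $\norm{\psi_j v_k\psi_j^{-1}-v_k}_2\leq 2\eps_k\delta_k$ and hence control on each conjugate --- but the number of conjugators needed is $\approx\delta_k^{-2}$, and $\delta_k$ is only known \emph{after} $F_k$ is fixed, so this is circular. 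Moreover, even granting the per-copy estimate, the commutators $[\psi_j w_k\psi_j^{-1},u]$ are not obviously $L^2$-orthogonal (the cross terms $\tau(u^*\tilde w_i^*u\tilde w_j)$ need not vanish), so the $\sqrt N\cdot\eps_k\delta_k$ accounting is unjustified. As for the case $\liminf_k\delta_k>0$: extending a central $v_k\in[[\cR]]$ to a central element of $[\cR]$ requires matching domain and range, and nothing forces $\mu(\dom v_k\,\Delta\,\im v_k)\to 0$; filling in the mismatch by an arbitrary $\phi\in[[\cR]]$ destroys centrality.

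The paper avoids all of this with a different mechanism. It first shows that condition~(2) passes to every restriction $\cR\rvert_Y$ (a spectral-gap computation), and then runs a \emph{maximality} argument: pick $v_n$ satisfying the $1/n$-estimate against $S_1,\dots,S_n$ with $\mu(\{v_n\neq\id\})$ maximal. If $\dom v_n$ and $\im v_n$ fail to asymptotically coincide, restricting $v_n$ to $\dom v_n\setminus\im v_n$ yields a sequence witnessing Marrakchi's local stability criterion \cite[Theorem~2.1]{Ma17}, so $\cR$ is stable and hence Schmidt. Otherwise, applying~(2) to $\cR$ restricted to the complement (or to the fixed-point set) produces an extension contradicting maximality, forcing $\mu(\{v_n\neq\id\})\to 1$; then any extension of $v_n$ to $[\cR]$ is the desired central sequence. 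The key point is that maximality plus the hereditary property of~(2) replaces your amplification-by-conjugation entirely, and Marrakchi's criterion handles precisely the domain/range mismatch that blocks your ``immediate'' case.
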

\begin{proof}
$(1)\Rightarrow (2)$ is immediate. Indeed, by definition, we can find $\delta>0$ and a central sequence $(T_n)_n$ in $[\cR]$ such that $\mu(\{x\in X\mid T_n(x)\neq x\})\geq \delta$ for all $n$. Moreover, we observe that for any $v\in [[\cR]]$, we have $\norm{v-\id\rvert_{\dom(v)}}_2^2 = 2\mu(\{x\in \dom(v)\mid v(x)\neq x\})$. Therefore, given any finite set $F\subset [[\cR]]$, we can take $n$ large enough so that
\[
\forall u\in F:\quad \norm{T_nu-uT_n}_2 < \eps \sqrt{2\delta} \leq \eps \norm{T_n-\id}_2,
\]
and thus (2) holds. 

Before proving $(2)\Rightarrow (1)$, we first show that if $\cR$ satisfies (2), then for every subset $Y\subset X$ of positive measure, $\cR\rvert_Y$ satisfies (2) as well. The argument is, mutatis mutandis, essentially the same as the first part of the proof of (iii)$\Rightarrow$(ii) in \cite[Theorem~2.1]{Ma17}, but we include it for the reader's convenience. 

Let $Y\subset X$ be a subset of positive measure and set $p=1_Y$. Suppose that $\cR\rvert_{Y}$ does not satisfy (2). Then we can find a finite set $F\subset [[\cR\rvert_Y]]$ and a constant $\kappa>0$ such that for all $v\in [[\cR\rvert_Y]]$, we have
\begin{equation}\label{eq:(2)}
	\norm{v-\id\rvert_{\dom(v)}}_2^2\leq \kappa\sum_{u\in F} \norm{vu-uv}_2^2.
\end{equation}
Since $\cR$ is assumed ergodic, we can find a finite set $K\subset [[\cR]]$ such that $\sum_{w\in K} w^*w = p^\perp$ and $ww^*\leq p$ for all $w\in K$. Now take any $v\in [[\cR]]$. Then we can write
\[
\norm{v-\id\rvert_{\dom(v)}}_2^2 = \norm{p(v-\id\rvert_{\dom(v)})}_2^2 + \sum_{w\in K} \norm{w(v-\id\rvert_{\dom(v)})}_2^2.
\]
At this point, we also observe that we can write $\id\rvert_{\dom(v)} = v^*v$. Thus, for any $w\in K$, we have 
\begin{align*}
	\norm{w(v-\id\rvert_{\dom(v)})}_2^2 &= \norm{wv-wv^*v}_2^2\\ 
	&\leq 4(\norm{wv - vw}_2^2 + \norm{vw - v^*vw}_2^2 + \norm{v^*vw - v^*wv}_2^2 + \norm{v^*wv - wv^*v}_2^2)\\
	&\leq 4(\norm{wv - vw}_2^2 + \norm{vp - v^*vp}_2^2 + \norm{vw - wv}_2^2 + \norm{v^*w - wv^*}_2^2).
\end{align*}
Combining the above, we thus have
\begin{align*}
	\norm{v-\id\rvert_{\dom(v)}}_2^2 \leq 4\abs{K}(\norm{p(v-v^*v)}_2^2 + &\norm{(v - v^*v)p}_2^2)\\ &+ 8\sum_{w\in K} \norm{vw-wv}_2^2 + 4\sum_{w\in K^*} \norm{vw-wv}_2^2.
\end{align*}
Furthermore, we see that $v^*vp = pv^*v = pv^*vp = \id_{\dom(v)\cap Y} = \id_{\dom(pvp)}$, from which a direct calculation gives
\begin{align*}
	\norm{p(v-v^*v)}_2^2 + \norm{(v - v^*v)p}_2^2 &= \norm{p(v-v^*v) - (v - v^*v)p}_2^2 + 2\norm{p(v-v^*v)p}_2^2\\
	&= \norm{pv - vp}_2^2 + 2\norm{pvp-\id_{\dom(pvp)}}_2^2.
\end{align*}
Applying \eqref{eq:(2)} to $pvp\in [[\cR\rvert_{Y}]]$, we get
\[
\norm{pvp-\id_{\dom(pvp)}}_2^2 \leq \kappa\sum_{u\in F} \norm{(pvp)u-u(pvp)}_2^2 \leq \kappa\sum_{u\in F} \norm{vu-uv}_2^2.
\]
Combining everything, we thus have
\begin{align*}
	\norm{v-\id\rvert_{\dom(v)}}_2^2 \leq 4\abs{K}\norm{pv - vp}_2^2 + \,&8\abs{K}\kappa \sum_{u\in F} \norm{vu-uv}_2^2 \\
	&+ 8\sum_{w\in K} \norm{vw-wv}_2^2 + 4\sum_{w\in K^*} \norm{vw-wv}_2^2.
\end{align*}
This shows $\cR$ does not satisfy (2), contradiction. Hence for every subset $Y\subset X$ of positive measure, $\cR\rvert_Y$ satisfies (2) as well.

We now prove $(2)\Rightarrow (1)$. Take a $\norm{.}_2$-dense sequence $(S_n)_n$ in $[\cR]$. Using (2), we can find for every $n\in\N$ an element $v\in [[\cR]]$ such that 
\[
\forall i\leq n: \quad \norm{vS_i - S_iv}_2 < \frac{1}{n}\norm{v-\id\rvert_{\dom(v)}}_2.
\]
Let $v_n$ be such an element for which $\mu(\{x\in X\mid v_n(x)\neq x\})$ is maximal among all such elements. Noting that $v_n^*v_n$ is the identity map on $A_n\coloneqq \dom(v_n)$ and $v_nv_n^*$ is the identity map on $B_n\coloneqq \im(v_n)$, we see that $(A_n)_n$ and $(B_n)_n$ are asymptotically invariant sequences in $X$, and therefore so are $(A_n\cap B_n)_n$ and $(A_n\setminus (A_n\cap B_n))_n$.

\textbf{Case 1.} $\lim\limits_{n\rarrow\infty}\mu(A_n\setminus (A_n\cap B_n)) \neq 0$. 

Passing to a subsequence if necessary, we can in this case assume that there exists $\delta>0$ such that for all $n$, $\mu(A_n\setminus (A_n\cap B_n))\geq \delta$. Restricting $v_n$ to $A_n\setminus (A_n\cap B_n)$, we then find a sequence witnessing condition (iii) in \cite[Theorem~2.1]{Ma17}. Therefore, $\cR$ is stable and hence Schmidt.

\textbf{Case 2.} $\lim\limits_{n\rarrow\infty}\mu(A_n\setminus (A_n\cap B_n)) = 0$. 

We claim that in this case, $\mu(\{x\in \dom(v_n)\mid v_n(x)\neq x\})\rarrow 1$ (and thus in particular also $\mu(\dom(v_n))\rarrow 1$). By extending $v_n$ in any way on $X\setminus \dom(v_n)$ to an element $T_n\in [\cR]$, this then shows that $\cR$ is Schmidt.

We first of all show that for $v_n$ chosen as above, $\lim\limits_{n\rarrow\infty} \mu(A_n) = 1$. Indeed, if not, then taking a subsequence if necessary, there exists $\delta>0$ such that $\mu(A_n)\leq 1-\delta$ for all $n$. By the assumption of Case~2, we thus get $\mu(X\setminus (A_n\cup B_n))\geq \delta/2$ for large enough $n$. Fix such $n$. Write $Y\coloneqq X\setminus (A_n\cup B_n)$, and apply (2) to $\cR\rvert_{Y}$. This gives $w_n\in [[\cR\rvert_Y]]$ such that 
\[
\forall i\leq n:\quad \norm{pS_ipw_n-w_npS_ip}_2 < \frac{1}{n} \norm{w_n-\id\rvert_{\dom(w_n)}}_2,
\]
where $p=1_{Y}$ denotes the projection onto $Y$. Extending $v_n$ to equal $w_n$ on $\dom(w_n)\subset Y$ then contradicts the maximality of $v_n$.

Similarly, if $\lim\limits_{n\rarrow\infty}\mu(\{x\in \dom(v_n)\mid v_n(x)\neq x\})\neq 1$, we consider the set $Y_n\coloneqq \{x\in \dom(v_n)\mid v_n(x)= x\}$. After passing to a subsequence if necessary, we can then assume $\mu(Y_n)\geq\delta$ for all $n$ and some $\delta>0$. Applying (2) to $Y_n$, produces $w_n\in [[\cR\rvert_{Y_n}]]$ such that 
\[
\forall i\leq n:\quad \norm{pS_ipw_n-w_npS_ip}_2 < \frac{1}{n} \norm{w_n-\id\rvert_{\dom(w_n)}}_2,
\]
Define the element $\tilde v_n\in [[\cR]]$ to equal $v_n$ on $X\setminus Y_n$ (which is obviously $v_n$-invariant) and to equal $w_n$ on $Y_n$. This element then once again contradicts the maximality of $v_n$, finishing the proof of Case~2, and therefore the Lemma.
\end{proof}

\begin{remark}\label{remark:measureone}
	Note that the proof in particular shows that if $\cR$ is Schmidt, then there exists a central sequence $T_n\in [\cR]$ such that $\mu(\{x\in X\mid T_n(x)\neq x\})=1$ for all $n$. This fact was first observed in \cite[Lemma~5.5]{KTD18}.
\end{remark}

Next, we will use Lemma~\ref{lem:local} to provide a characterization of central sequences in the full group of a product equivalence relation, one of whose factors is not Schmidt. Before stating and proving this result, we establish some notation which will be useful in the proof. We also refer to \cite[Section~3]{Ma17} where a similar notation is introduced and exploited.

Let $\cR$ be a p.m.p. equivalence relation on a standard probability space $(X,\mu)$, and identify $L^2(\cR,\bar \mu)$ with the canonical $L^2$-space of the von Neumann algebra $L(\cR)$. For any $x\in L(\cR)$, we will write $\hat x$ for the corresponding vector in $L^2(\cR,\bar \mu)$. Most important for us is that for $v\in [[\cR]]$, $\hat v$ corresponds to the indicator function of the graph $\{(v(x),x)\mid x\in \dom(v)\}$ of $v$.

Now if $\cS$ is another p.m.p. equivalence relation on some standard probability space $(Y,\nu)$, then for any $v\in [[\cR\times\cS]]$, there exists a unique function $v_\cR\in L^0(\cS,[[\cR]])$ such that 
\[
\hat v(x,x',y,y') = \widehat{v_\cR(y,y')}(x,x')
\]
for almost every $(x,x',y,y')\in \cR\times\cS$. In other words, $v_\cR$ satisfies
\begin{align*}
v_\cR(y,y')(x) = x' \;&\iff\; \widehat{v_\cR(y,y')}(x,x') = 1\\
&\iff\; \hat v(x,x',y,y') = 1\\
&\iff\; v(x,y) = (x',y')
\end{align*}
for almost every $(x,x',y,y')\in \cR\times\cS$.

\begin{lemma}\label{lem:cseqSchmidt}
	Let $\cR$ and $\cS$ be countable ergodic p.m.p. equivalence relations on standard probability spaces $(X,\mu)$ and $(Y,\nu)$ respectively, and assume $\cR$ is not Schmidt. If $(v_n)_n$ is a central sequence in $[[\cR\times\cS]]$, then there exists a measurable assignment $X\ni x\mapsto v_{n,x}\in [[\cS]]$ such that $(v_{n,x})_n$ is a central sequence for almost every $x\in X$, and such that
	\[
	(\mu\times\nu)(\{(x,y)\in X\times Y\mid v_n(x,y)\neq (x,v_{n,x}(y))\})\rarrow 0\quad\text{ as $n\rarrow\infty$}.
	\]
	Moreover, if $v_n\in [\cR\times\cS]$ for every $n$, then $v_{n,x}\in [\cS]$ for every $n$ and almost every $x\in X$.
\end{lemma}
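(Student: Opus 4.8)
\emph{Strategy.} I would prove the two assertions separately: (a) the central sequence $(v_n)_n$ must asymptotically \emph{fix the $X$-coordinate}, precisely because $\cR$ carries no non-trivial central sequences; (b) once (a) is known, the field of partial isometries of $Y$ it induces is itself asymptotically central. Write $v_n(x,y)=(a_n(x,y),b_n(x,y))$ on $\dom(v_n)\sub X\times Y$, and use the measurable field $(v_n)_\cR\in L^0(\cS,[[\cR]])$ described before the Lemma, characterized by $(v_n)_\cR(y,y')(x)=x'\iff v_n(x,y)=(x',y')$. Since $\cR$ is not Schmidt, property (2) of Lemma~\ref{lem:local} fails, so there are a finite set $F\sub[[\cR]]$ and $\eps>0$ with
\[
\eps^2\norm{w-\id\rvert_{\dom(w)}}_2^2\le\sum_{u\in F}\norm{wu-uw}_2^2\qquad\text{for all }w\in[[\cR]].
\]

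\emph{Step 1 (the crux): $\delta_n\coloneqq(\mu\times\nu)(\{(x,y)\in\dom(v_n):a_n(x,y)\ne x\})\to0$.} Apply the displayed inequality to $w=(v_n)_\cR(y,y')$ for almost every $(y,y')\in\cS$ and integrate over $\cS$ against its canonical infinite measure $\bar\nu$. Using $\norm{w-\id\rvert_{\dom(w)}}_2^2=2\,\mu(\{x\in\dom(w):w(x)\ne x\})$ together with the definition of $(v_n)_\cR$, the integral of the left-hand side is $2\eps^2\delta_n$. For the right-hand side, I would verify the fiberwise identities $(v_n(u\times\id))_\cR(y,y')=(v_n)_\cR(y,y')\circ u$ and $((u\times\id)v_n)_\cR(y,y')=u\circ (v_n)_\cR(y,y')$ for $u\in[[\cR]]$, and the direct-integral norm identity $\norm{z}_{L^2(\cR\times\cS)}^2=\int_\cS\norm{z_\cR(y,y')}_{L^2(\cR)}^2\,d\bar\nu$; together these give $\int_\cS\norm{(v_n)_\cR(y,y')u-u(v_n)_\cR(y,y')}_2^2\,d\bar\nu=\norm{v_n(u\times\id)-(u\times\id)v_n}_2^2$. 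Hence $2\eps^2\delta_n\le\sum_{u\in F}\norm{v_n(u\times\id)-(u\times\id)v_n}_2^2$, and since $u\times\id\in[[\cR\times\cS]]$ and $(v_n)_n$ is central, the right-hand side tends to $0$.

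\emph{Steps 2--3: build the field and show it is on-average asymptotically central.} For a.e.\ $x$ let $w_{n,x}\in[[\cS]]$ have domain $\{y:(x,y)\in\dom(v_n),\,a_n(x,y)=x\}$ and $w_{n,x}(y)=b_n(x,y)$; this is well-defined by injectivity of $v_n$. Then $\{(x,y):v_n(x,y)\ne(x,w_{n,x}(y))\}=\{(x,y)\in\dom(v_n):a_n(x,y)\ne x\}$ has measure $\delta_n$, and for $\tilde v_n\coloneqq\dint_X w_{n,x}\,d\mu(x)\in[[\id_X\times\cS]]$ one gets $\norm{v_n-\tilde v_n}_2^2=\delta_n\to0$. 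So $\tilde v_n$ is again asymptotically central, and testing against $\id_X\times S$ for $S\in[[\cS]]$ and decomposing over $X$ gives $\int_X\norm{w_{n,x}S-Sw_{n,x}}_2^2\,d\mu(x)=\norm{\tilde v_n(\id\times S)-(\id\times S)\tilde v_n}_2^2\to0$.

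\emph{Step 4: upgrade to a.e.\ $x$, and the ``moreover''.} Fix a $\norm{\cdot}_2$-dense sequence $(S_m)_m$ in $[[\cS]]$ and set $c_{n,m}\coloneqq\int_X\norm{w_{n,x}S_m-S_mw_{n,x}}_2^2\,d\mu(x)$, so $c_{n,m}\to0$ for each $m$; choose $k_n\to\infty$ slowly enough that $\Sigma_n\coloneqq\sum_{m\le k_n}c_{n,m}\to0$, put $\eta_n\coloneqq\Sigma_n^{1/3}+1/n$ and $B_n\coloneqq\{x:\norm{w_{n,x}S_m-S_mw_{n,x}}_2\ge\eta_n\text{ for some }m\le k_n\}$. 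Then $\mu(B_n)\le\eta_n^{-2}\Sigma_n\to0$ by Chebyshev. Define $v_{n,x}\coloneqq w_{n,x}$ for $x\notin B_n$ and $v_{n,x}\coloneqq\id_Y$ for $x\in B_n$: the approximation set for $(v_{n,x})_n$ has measure $\le\delta_n+\mu(B_n)\to0$, while $\norm{v_{n,x}S_m-S_mv_{n,x}}_2<\eta_n$ for every $x$ and every $m\le k_n$, so (by density of $(S_m)_m$ and $\norm{v_{n,x}}\le1$) $(v_{n,x})_n$ is central in $[[\cS]]$ for a.e.\ $x$. For the ``moreover'', if $v_n\in[\cR\times\cS]$, then running Step~2 also for $v_n^{-1}$ shows that for a.e.\ $x$ the partial isometry $w_{n,x}$ has conull domain and conull image, hence extends to an element of $[\cS]$; since $\id_Y\in[\cS]$ as well, $v_{n,x}\in[\cS]$ for a.e.\ $x$. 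The main obstacle I anticipate is Step~1 — matching the single ``global'' commutator $\norm{v_n(u\times\id)-(u\times\id)v_n}_2$ with the integral over $\cS$ of the ``fibered'' commutators, which requires care since for fixed $y$ the domains $\dom((v_n)_\cR(y,y'))$ over varying $y'$ are disjoint while the corresponding images generally are not; the measure-theoretic upgrade in Step~4 is the secondary, more routine point.
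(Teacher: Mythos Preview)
Your Step~1 is exactly the paper's argument: apply the negation of condition~(2) in Lemma~\ref{lem:local} to the fibered partial isometries $(v_n)_\cR(y,y')\in[[\cR]]$, integrate over $(\cS,\bar\nu)$, and identify the resulting integral with $\norm{v_n(u\times\id)-(u\times\id)v_n}_2^2$. The paper then simply asserts ``The Lemma will follow if we prove that $(\mu\times\nu)(\{(x,y):v_n^{(1)}(x,y)\neq x\})\to0$'' and ``The moreover part follows immediately'', so your Steps~2--4 in fact supply details the paper leaves implicit; in particular, your Chebyshev/diagonal argument is a clean way to upgrade the on-average centrality of $(w_{n,x})_n$ to genuine centrality for a.e.\ fixed $x$, a point the paper does not address.

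One small slip in the ``moreover'': you claim that when $v_n\in[\cR\times\cS]$ the partial isometry $w_{n,x}$ has conull domain for a.e.\ $x$. This is not what Step~1 gives --- $\dom(w_{n,x})=\{y:a_n(x,y)=x\}$, and you only know $\int_X\nu(Y\setminus\dom(w_{n,x}))\,d\mu(x)=\delta_n\to0$, which is an average statement. The fix is immediate, though: since $\cS$ is ergodic and $\nu(\dom(w_{n,x}))=\nu(\im(w_{n,x}))$, each $w_{n,x}$ extends (measurably in $x$) to an element of $[\cS]$, and the approximation estimate is unchanged.
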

\begin{proof}	
Let $(v_n)_n\in [[\cR\times \cS]]$ be a central sequence. Write $v_n(x,y) = (v_n^{(1)}(x,y),v_n^{(2)}(x,y))$, and consider $v_{n,\cR}\in L^0(\cS,[[\cR]])$. The Lemma will follow if we prove that
\[
(\mu\times\nu)(\{(x,y)\in X\times Y\mid v_n^{(1)}(x,y)\neq x\})\rarrow 0.
\]
Since $\cR$ is not Schmidt, it follows from Lemma~\ref{lem:local} that there exists a finite set $F\subset [[\cR]]$ and $\kappa>0$ such that
\[
\forall v\in [[\cR]]:\quad \norm{v-\id\rvert_{\dom(v)}}_2^2 \leq \kappa \sum_{u\in F} \norm{vu-uv}_2^2.
\]
Moreover, we recall the observation that $\norm{v-\id\rvert_{\dom(v)}}_2^2 = 2\mu(\{x\in \dom(v)\mid v(x)\neq x\})$. We now compute:
\begin{align*}
	(\mu\times\nu)(\{(x,y)\in \;&X\times Y\mid v_n^{(1)}(x,y)\neq x\}) \\
	&= (\mu\times\nu)(\{(x,y)\in X\times Y\mid v_{n,\cR}(y,v_n^{(2)}(x,y))(x)\neq x\})\\
	&= (\mu\times\bar\nu)(\{(x,y,y')\in X\times \cS\mid v_{n,\cR}(y,y')(x)\neq x\})\\
	&= \int_{\cS} \mu(\{x\in X\mid v_{n,\cR}(y,y')(x)\neq x\}) \,d\bar{\nu}(y,y')\\
	&\leq \kappa/2 \sum_{u\in F} \int_{\cS} \norm{v_{n,\cR}(y,y')u - uv_{n,\cR}(y,y')}_2^2 \,d\bar{\nu}(y,y')\\
	&= \kappa/2 \sum_{u\in F} \int_{\cS} \int_\cR \big|\hat{v}_n(x,x',y,y')\hat{u}(x,x') -\\
		&\qquad\qquad\qquad\qquad\qquad \hat{u}(x,x')\hat{v}_n(x,x',y,y')\big|^2 \,d\bar{\mu}(x,x')\,d\bar{\nu}(y,y')\\
	&= \kappa/2 \sum_{u\in F} \norm{v_n (u\ot 1) - (u\ot 1) v_n}_2^2.
\end{align*}
Since $(v_n)_n$ is by assumption a central sequence in $[[\cR\times\cS]]$, the latter converges to zero, finishing the proof of the main part of the Lemma. The moreover part follows immediately.
\end{proof}

The next Lemma tells us that we can recognize the absence of the Schmidt property in the stabilization of an equivalence relation.

\begin{lemma}\label{lem:nonSchmidt}
	Let $\cR_1, \cR_2$ be countable ergodic p.m.p. equivalence relations on $(X_1,\mu_1)$ and $(X_2,\mu_2)$ respectively, and $\cR_{0,1}, \cR_{0,2}$ be hyperfinite ergodic p.m.p. equivalence relations on $(Y_1,\nu_1)$ and $(Y_2,\nu_2)$ respectively, such that $\cR_1\times\cR_{0,1}\cong \cR_2\times\cR_{0,2}$. Assume that $\cR_1$ is not Schmidt. Then either
	\begin{enumerate}
		\item $\cR_2$ is also not Schmidt, or
		\item $\cR_2$ is stable, and thus $\cR_2\cong \cR_2\times\cR_{0,2}\cong \cR_1\times\cR_{0,1}$.
	\end{enumerate}
\end{lemma}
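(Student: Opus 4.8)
The idea is to use the hypothesis $\cR_1\times\cR_{0,1}\cong\cR_2\times\cR_{0,2}$ to identify a single equivalence relation $\cR$ with two product decompositions, and then ask whether $\cR_2$ is Schmidt. Suppose $\cR_2$ is Schmidt; I will show that then $\cR_2$ is stable, which gives alternative (2) (the isomorphism $\cR_2\cong\cR_2\times\cR_{0,2}$ being the definition of stability, and $\cR_2\times\cR_{0,2}\cong\cR_1\times\cR_{0,1}$ the hypothesis). If $\cR_2$ is Schmidt, then by definition there is a non-trivial central sequence $(T_n)_n$ in $[\cR_2]$. Viewing $\cR_2$ as a factor of $\cR=\cR_2\times\cR_{0,2}$, the sequence $(T_n\times\id_{Y_2})_n$ is a central sequence in $[\cR]=[\cR_1\times\cR_{0,1}]$, still non-trivial since $\liminf_n(\mu_2\times\nu_2)(\{T_n\times\id \neq \id\}) = \liminf_n \mu_2(\{T_n\neq\id\})>0$.

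\textbf{Main step.} Now apply Lemma~\ref{lem:cseqSchmidt} to this central sequence, with respect to the \emph{other} decomposition $\cR = \cR_1\times\cR_{0,1}$ and using the assumption that $\cR_1$ is not Schmidt. This produces, for almost every $x_1\in X_1$, a central sequence $(w_{n,x_1})_n\in [[\cR_{0,1}]]$ such that $(\mu_1\times\nu_1)(\{(x_1,y_1)\mid (T_n\times\id)(x_1,y_1)\neq(x_1,w_{n,x_1}(y_1))\})\rarrow 0$. The point is that the central sequence in $[\cR]$ coming from $\cR_2$ must, up to small error, live inside the hyperfinite factor $\cR_{0,1}$. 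The key claim I then need is that this forces $\cR_2\times\cR_{0,2}$ — equivalently $\cR_1\times\cR_{0,1}$ — to admit a central sequence which is non-trivial on the measure space, i.e. an asymptotically invariant but non-convergent sequence of sets, on some positive-measure piece in a way that can be upgraded to full stability of $\cR_2$. Concretely: since $(T_n)_n$ is non-trivial, by Lemma~\ref{lem:fneqid} there is a fixed positive-measure $A\subset X_2$ and (passing to a subsequence) $\delta>0$ with $T_n(A_n)\cap A_n=\emptyset$ where $A_n$ witnesses $\mu_2(\{T_n\neq\id\})\geq\delta$; after restricting one extracts from $(T_n)_n$ a sequence of the ``exchange'' type (a sequence of partial isometries $v_n\in[[\cR_2]]$ with $\dom(v_n)$ and $\im(v_n)$ disjoint, central, with $\liminf_n\mu_2(\dom(v_n))>0$). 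Such a sequence is exactly what verifies condition (iii) of \cite[Theorem~2.1]{Ma17} for $\cR_2$, hence shows $\cR_2$ is stable. The role of Lemma~\ref{lem:cseqSchmidt} and the non-Schmidtness of $\cR_1$ is to guarantee that this ``stably nontrivial'' behavior genuinely comes from the $\cR_2$-direction and cannot be absorbed into $\cR_1$: the $\cR_1$-coordinate of $T_n\times\id$ stays close to the identity, so the nontriviality of $(T_n)_n$ must be detected either along $\cR_{0,1}$ (the hyperfinite, hence stable, direction) or as a genuine asymptotically invariant set, and in either case one produces the exchange sequence needed for stability of $\cR_2$.

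\textbf{Carrying it out.} So the steps, in order, are: (i) form $\cR$ and transport $(T_n)_n$ to a non-trivial central sequence in $[\cR]$; (ii) apply Lemma~\ref{lem:cseqSchmidt} with the $\cR_1\times\cR_{0,1}$ decomposition, using $\cR_1$ not Schmidt, to see the central sequence is essentially supported on the $\{x_1\}\times Y_1$ fibers; (iii) combine this with the original non-triviality — via Lemma~\ref{lem:fneqid} to split domain from range — to extract, after restriction to a fixed positive-measure set, an exchange-type central sequence $v_n\in[[\cR_2]]$ with disjoint domain and image and $\liminf_n\mu_2(\dom v_n)>0$; (iv) invoke the local characterization of stability \cite[Theorem~2.1]{Ma17}, condition (iii), to conclude $\cR_2$ is stable; (v) then $\cR_2\cong\cR_2\times\cR_{0,2}\cong\cR_1\times\cR_{0,1}$, which is alternative (2), and if $\cR_2$ is not Schmidt we are in alternative (1).

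\textbf{Expected main obstacle.} The delicate point is step (iii): turning ``the central sequence $(T_n\times\id)$ restricted via Lemma~\ref{lem:cseqSchmidt} behaves like a central sequence of $[[\cR_{0,1}]]$-valued functions'' into a bona fide exchange sequence for $\cR_2$ over a \emph{fixed} positive-measure set. One must be careful that the fibers $w_{n,x_1}$ are central in $[[\cR_{0,1}]]$ for a.e. $x_1$ but the non-triviality is only guaranteed on average over $x_1$; a measure-theoretic selection/averaging argument (similar to the uniformization arguments used in Proposition~\ref{prop:intertwineprod}) is needed to pin down a set of positive measure where both centrality and non-triviality persist, and to match it up with the disjointness of domain and image coming from Lemma~\ref{lem:fneqid}. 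Once that is in place, the appeal to \cite[Theorem~2.1]{Ma17} is routine.
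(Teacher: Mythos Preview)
Your steps (i) and (ii) are correct and match the paper. The genuine gap is in step~(iii), and it is not the uniformization issue you identify.

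To verify condition~(iii) of \cite[Theorem~2.1]{Ma17} for $\cR_2$ you need a \emph{central} sequence $v_n\in[[\cR_2]]$ with $\dom(v_n)\cap\im(v_n)=\emptyset$ and $\liminf_n\mu_2(\dom v_n)>0$. Your plan is to set $v_n=T_n\rvert_{B_n}$ for some $B_n\subset X_2$ with $T_n(B_n)\cap B_n=\emptyset$ coming from Lemma~\ref{lem:fneqid}. But $v_n$ is central only if $(B_n)_n$ is asymptotically $\cR_2$-invariant, and Lemma~\ref{lem:fneqid} gives no such control whatsoever. Worse: if $\cR_2$ were Schmidt but not stable, the Jones--Schmidt characterization \cite[Theorem~3.4]{JS85} says exactly that for \emph{every} asymptotically invariant sequence $(B_n)$ one has $\mu_2(T_nB_n\Delta B_n)\to 0$, so no asymptotically invariant $B_n$ with $T_n(B_n)\cap B_n=\emptyset$ and $\mu_2(B_n)$ bounded below can exist. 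Nothing you have written explains how knowing that the $X_1$-coordinate of $T_n\times\id$ is asymptotically trivial helps you manufacture such $B_n$ \emph{inside $X_2$}; the information from Lemma~\ref{lem:cseqSchmidt} lives in the $Y_1$-fibers, not in $X_2$.

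The paper closes this gap by arguing by contradiction (assume $\cR_2$ Schmidt and not stable, so Jones--Schmidt applies) and then building the asymptotically invariant sets \emph{in the product space} $X_1\times Y_1$ rather than in $X_2$. The key construction, which your outline is missing entirely, uses the odometer model $Y_1=\{0,1\}^{\N}$ for $\cR_{0,1}$: one approximates each fiber map $T_{n,x}\in[\cR_{0,1}]$ by a map $V_{n,x}$ that fixes the first $n$ coordinates, and then (via Lemma~\ref{lem:fneqid}) picks $A_{n,x}\subset Y_1$ of the form $\{0,1\}^n\times\tilde A_{n,x}$ with $V_{n,x}(A_{n,x})\cap A_{n,x}=\emptyset$ and $\nu_1(A_{n,x})$ bounded below. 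The ``tail'' form of $A_{n,x}$ is what forces $A_n\coloneqq\bigcup_x\{x\}\times A_{n,x}$ to be asymptotically invariant, and yet $(T_n\times\id)A_n\Delta A_n$ stays bounded away from zero, contradicting \eqref{eq:ac2}. This passage through the odometer model---approximating $T_{n,x}$ by tail maps and choosing tail sets---is the substantive idea that your sketch does not contain and that cannot be replaced by a selection argument.
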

\begin{proof}
	For notational convenience, we will identify $(X_1\times Y_1,\mu_1\times \nu_1)$ and $(X_2\times Y_2,\mu_2\times \nu_2)$ through the given isomorphism, and write $\mu=\mu_1\times\nu_1 = \mu_2\times\nu_2$. Assume by contradiction that $\cR_2$ is Schmidt and not stable. Following Remark~\ref{remark:measureone}, since $\cR_2$ is Schmidt, we can find a central sequence $(T_n)_n$ in $[\cR_2]$ such that for all $n$,
	\begin{equation*}
		\mu_2(\{x\in X_2\mid T_n(x)\neq x\})=1.
	\end{equation*}
	Furthermore, since $\cR_2$ is not stable, we must have by \cite[Theorem~3.4]{JS85} that for every asymptotically invariant sequence $(A_n)_n\sub X_2$ for $\cR_2$, 
	\begin{equation*}
		\lim\limits_{n\rarrow\infty} \mu_2(T_nA_n\Delta A_n)\rarrow 0.
	\end{equation*}
	By considering $T_n\coloneqq T_n\times\id_{Y_2}\in [\cR_2\times\cR_{0,2}]$, we then get from the above that also
	\begin{equation}\label{eq:ac1}
		\mu(\{(x,y)\in X_2\times Y_2\mid T_n(x,y)\neq (x,y)\}) = (\mu_2\times\nu_2)(\{(x,y)\in X_2\times Y_2\mid T_n(x)\neq x\}) =1,
	\end{equation}
	and for any asymptotically invariant sequence $(A_n)_n\sub X_2\times Y_2$, 
	\begin{equation}\label{eq:ac2}
		\lim\limits_{n\rarrow\infty} \mu(T_nA_n\Delta A_n)\rarrow 0.
	\end{equation}
	For the latter, note that if $(A_n)_n$ is an asymptotically invariant sequence in $X_2\times Y_2$, it is in particular asymptotically invariant under all maps of the form $S\times\id_{Y_2}\in [\cR_2\times \cR_{0,2}]$. Therefore, writing $A_{n,y}\coloneqq \{x\in X_2\mid (x,y)\in A_n\}$, the sets $(A_{n,y})_n$ have to be asymptotically invariant for $\cR_2$ for almost every $y\in Y_2$. Thus, necessarily $\lim_{n\rarrow\infty} \mu_2(T_nA_{n,y}\Delta A_{n,y})\rarrow 0$ for almost every $y\in Y_2$. Integrating over $Y_2$, \eqref{eq:ac2} then follows from the dominated convergence theorem.
	
	Through the given isomorphism, we now view $T_n\in [\cR_1\times\cR_{0,1}]$. Since $\cR_1$ is not Schmidt, Lemma~\ref{lem:cseqSchmidt} provides a measurable assignment $x\mapsto T_{n,x}\in [\cR_{0,1}]$ such that $(T_{n,x})_n$ is central for almost every $x\in X_1$ and such that 
	\begin{equation}\label{eq:Tnx}
		\mu(\{(x,y)\in X_1\times Y_1\mid T_n(x,y)\neq (x,T_{n,x}(y))\})\rarrow 0.
	\end{equation}
	Combining \eqref{eq:Tnx} and \eqref{eq:ac1}, we get that
	\begin{equation}\label{eq:Tny}
		\nu_1(\{y\in Y_1\mid T_{n,x}(y)\neq y\})\rarrow 1
	\end{equation}
	for almost every $x\in X_1$.
	
	Since $\cR_{0,1}$ is ergodic, hyperfinite, and p.m.p., we can realize it as the orbit equivalence relation arising from the action of $\oplus_\N \Z/2\Z$ on $Y_1=\prod_\N \Z/2\Z$ by componentwise translation. 
	Passing to a subsequence if necessary, and using the fact that $(T_{n,x})_n$ is a central sequence for the hyperfinite equivalence relation $\cR_{0,1}$ for almost every $x\in X_1$, we claim that we can find $V_{n,x}\in [\cR_{0,1}]$ of the form 
	\begin{equation}\label{eq:Vnx}
	V_{n,x} = \id_{\prod_{i\leq n}\Z/2\Z} \times \tilde V_{n,x}
	\end{equation}
	such that
	\begin{equation}\label{eq:TV}
		\nu_1(\{y\in Y_1\mid T_{n,x}(y)\neq V_{n,x}(y)\})\rarrow 0
	\end{equation}
	for almost every $x\in X_1$.
	Indeed, for a fixed $m\geq 2$, we can consider the (finitely many) elements $\{S_k\}$ of the full group $[\cR_{0,1}]$ corresponding to bijections $\{0,1\}^m\rarrow \{0,1\}^m$ which only change the first $m$ coordinates of a given element in $\prod_\N \Z/2\Z$. Note that these elements permute the clopen subsets $\{C_a\mid a\in \{0,1\}^m\}$ where $C_a \coloneqq \{y=(y_i)_i\in Y_1\mid y_i=a_i \text{ for all } i\leq m\}$. Since the $(T_{n,x})_n$ are central sequences, we can take $n$ large enough so that 
	\begin{equation}\label{eq:TSST}
		\nu_1(\{y\in Y_1\mid T_{n,x}S_k(y) = S_kT_{n,x}(y) \text{ for all } k\})>1-\frac{1}{m}
	\end{equation}
	for all $x$ in a subset $X^{(m)}$ of $X_1$ of measure at least $1-\frac{1}{m}$.
	Now suppose $y\in Y_1$ is such that $T_{n,x}(y)_i\neq y_i$ for some $i\leq m$ and some $x\in X^{(m)}$, and let $a\neq b\in \{0,1\}^m$ be such that $y\in C_a$ and $T_{n,x}(y)\in C_b$. Choose any element $S_k$ as above that leaves $C_a$ invariant and maps $C_b$ onto $C_c$ for some $c\neq a,b$. Then $T_{n,x}(S_k(y)) = T_{n,x}(y)\in C_b$, but $S_k(T_{n,x}(y))\in C_c$, and therefore
	\[
	T_{n,x}S_k(y) \neq S_kT_{n,x}(y).
	\]
	Combining this with \eqref{eq:TSST}, we conclude that for every $x\in X^{(m)}$, the set $Z_x\coloneqq \{y\in Y_1\mid T_{n,x}(y)_i\neq y_i \text{ for some } i\leq m\}$ has measure
	\[
	\nu_1(Z_x)\leq \frac{1}{m}.
	\]
	For $x\in X^{(m)}$, let $V_{m,x}$ equal $T_{n,x}$ on $Y_1\setminus Z_x$ and extend $V_{m,x}$ in any measurable way to $Z_x$ such that it leaves the first $m$ coordinates of any element fixed. For $x\notin X^{(m)}$, we can define $V_{m,x}$ in any way that leaves the first $m$ coordinates of any element fixed. Then the $V_{m,x}$ will satisfy \eqref{eq:Vnx}. 
	
	Note that since $x\mapsto T_{n,x}$ is measurable, the same holds for $x\mapsto Z_x$ and thus for $x\mapsto V_{m,x}|_{Y_1\setminus Z_x}$ on $X^{(m)}$. Since we are free in extending $V_{m,x}$ on $Z_x$ and defining $V_{m,x}$ for $x\notin X^{(m)}$, we can make sure $x\mapsto V_{m,x}$ is measurable on $X_1$.
	
	Next, observe that by construction, the map
	\[
	V_n \coloneqq \dint_{X_1} V_{n,x}\,dx: X_1\times Y_1\rarrow X_1\times Y_1: (x,y)\mapsto (x,V_{n,x}(y))
	\]
	is asymptotically invariant for $\cR_1\times\cR_{0,1}$. In particular, we get for any $S\in [\cR_1]$ that
	\begin{align*}
	\mu(\{(x,y)&\in X_1\times Y_1\mid (S\times\id)V_n(x,y)\neq V_n(S\times\id)(x,y)\}) \\
	&= \mu(\{(x,y)\in X_1\times Y_1\mid (S(x),V_{n,x}(y))\neq (S(x),V_{n,S(x)}(y))\}) \\
	&\longrightarrow 0.
	\end{align*}
	Hence for every $S\in [\cR_1]$, we have that for almost every $x\in X_1$:
	\[
	\nu_1(\{y\in Y_1\mid V_{n,x}(y)\neq V_{n,S(x)}(y)\}\rarrow 0.
	\]
	Fix a countable dense sequence $S_k\in [\cR_1]$ and $\frac{1}{32}>\eps_n>0$ with $\lim_{n\rarrow\infty} \eps_n=0$. Passing to a subsequence of $(V_n)_n$ if necessary, we can then find subsets $\tilde X_{(n)}\subset X_1$ of measure at least $1-\eps_n$ such that for all $x\in \tilde X_{(n)}$ and all $1\leq k\leq n$:
	\begin{equation}\label{eq:xS}
	\nu_1(\{y\in Y_1\mid V_{n,x}(y)\neq V_{n,S_k(x)}(y)\}<\eps_n.
	\end{equation}
	
	Combining \eqref{eq:TV} with \eqref{eq:Tny}, we note that we also have 
	\[
	\nu_1(\{y\in Y_1\mid V_{n,x}(y)\neq y\})\rarrow 1,
	\]
	for almost every $x\in X_1$. Passing to a further subsequence if necessary, we can thus assume that the set $X'_{(n)}\coloneqq \{x\in \tilde X_{(n)}\mid \nu_1(\{y\in Y_1\mid V_{n,x}(y)\neq y\})\geq 1-\eps_n\}$ has measure $\mu_1(X'_{(n)})\geq 1-2\eps_n$.

	{\bf Claim.} For every $n\geq 1$, we can find a measurable map $X_1\ni x\mapsto A_{n,x}\subset Y_1$ and a set $X_{(n)}$ with $\mu_1(X_{(n)})\geq 1-3\eps_n$, such that the following four conditions hold.
	\begin{enumerate}
		\item For all $x\in X_{(n)}$: $V_{n,x}(A_{n,x})\cap A_{n,x} =\emptyset$,
		\item for all $x\in X_{(n)}$ and $1\leq k\leq n$: $\nu_1(A_{n,x}\Delta A_{n,S_k(x)})<2\eps_n$, 
		\item $\int_{X_{(n)}} \nu_1(A_{n,x}) \,d\mu_1(x) \geq\frac{1}{4}$, and
		\item $A_{n,x}$ is of the form $A_{n,x} = \prod_{i\leq n} \Z/2\Z \times \tilde A_{n,x}$.
	\end{enumerate}
	
	{\it Proof of the Claim.} Fix a Borel metric $d$ with diameter 1 on $Y_1$, for instance through a Borel isomorphism $\theta$ with $[0,1]$. For $x\in X'_{(n)}$, define
	\[
	\delta_x\coloneqq \sup\{\delta>0\mid \nu_1(\{y\in Y_1\mid d(y,V_{n,x}(y))>\delta\})>1-2\eps_n\}.
	\]
	Then $x\mapsto \delta_x$ is measurable, and we can find $\delta_0>0$ such that 
	\[
	\mu_1(\{x\in X'_{(n)}\mid \delta_x\geq\delta_0\})>1-3\eps_n.
	\]
	Denote the set of such $x$ by $X_{(n)}$. Fix $N\geq\frac{1}{\delta_0}$, and partition $Y_1$ into $N$ Borel sets $B_1, \ldots, B_N$ of diameter $\leq\frac{1}{N}$, for instance by pushing forward the intervals $[\frac{i-1}{N}, \frac{i}{N}]$ through the Borel isomorphism $\theta$. By construction, given $x\in X_{(n)}$, we have that the measure of the set of $y\in Y_1$ for which $y$ and $V_{n,x}(y)$ lie in different $B_i$'s is at least $1-2\eps_n$, and thus
	\[
	\nu_1(\{y\in Y_1\mid y\text{ lies in a different }B_i\text{ than both }V_{n,x}(y)\text{ and }V_{n,x}^{-1}(y)\})>1-4\eps_n.
	\]	
	Denote the set of such $y$ by $Y_x$. Consider an ordering of the sets $B_1, \ldots, B_N$, and define an associated partial order on $Y_1$ by 
	\[
	y_1 < y_2 \,\Longleftrightarrow\, y_1\in B_i, y_2\in B_j \text{ and } B_i < B_j.
	\]
	For $x\in X_{(n)}$, we then consider the set
	\[
	A_{n,x,<}\coloneqq \{y\in Y_x\mid y < V_{n,x}(y) \text{ and } y < V_{n,x}^{-1}(y)\}.
	\]
	Firstly, we observe that for every ordering $<$, the map $x\mapsto A_{n,x,<}$ is measurable. Furthermore, for every ordering $<$, the sets $A_{n,x,<}$ satisfy property (1) from the claim by construction, and property (2) follows from \eqref{eq:xS}. For property (3), we observe that for every $x\in X_{(n)}$ and $y\in Y_x$, we have that $y$, $V_{n,x}(y)$, and $V_{n,x}^{-1}(y)$ lie in two or three different $B_i$'s (depending on whether $V_{n,x}(y)$ and $V_{n,x}^{-1}(y)$ lie in the same one or not). In particular, it follows that for every $x\in X_{(n)}$, each $y\in Y_x$ lies in $A_{n,x,<}$ for at least $1/3$rd of the orderings of $B_1, \ldots, B_N$. Summing over all possible $N!$ orderings, we get
	\[
	\sum_{i=1}^{N!} \nu_1(A_{n,x,<_i}) \geq \frac{N!}{3}\cdot(1-4\eps_n)
	\]
	for every $x\in X_{(n)}$, and thus
	\[
	\sum_{i=1}^{N!} \int_{X_{(n)}} \nu_1(A_{n,x,<_i}) \,d\mu_1(x) = \int_{X_{(n)}} \sum_{i=1}^{N!} \nu_1(A_{n,x,<_i}) \,d\mu_1(x) \geq \frac{N!}{3}\cdot(1-4\eps_n)\cdot (1-3\eps_n) \geq \frac{N!}{4}.
	\]
	In particular there exists an ordering $<$ of the sets $B_1, \ldots, B_N$ such that the sets $A_{n,x}\coloneqq A_{n,x,<}$ satisfy $\int_{X_{(n)}} \nu_1(A_{n,x}) \,d\mu_1(x) \geq\frac{1}{4}$. Finally, property (4) follows from running the above argument on $\prod_{i>n} \Z/2\Z$, which we can do thanks to \eqref{eq:Vnx}, finishing the proof of the claim. \hfill(End Claim $\square$)

	We now define
	\[
	A_n\coloneqq \bigcup_{x\in X_{(n)}} \{x\}\times A_{n,x} \subset X_1\times Y_1.
	\]
	(One can also view this as a direct integral.) By the above Claim(3), we have $\mu(A_n)\geq \frac{1}{4}>0$ for every $n$. Moreover, by construction $(A_n)_n$ is an asymptotically invariant sequence for $\cR_1\times \cR_{0,1}\cong \cR_2\times\cR_{0,2}$. Indeed, it is asymptotically invariant for $\cR_1$ by Claim(2) together with the fact that $\mu_1(X_{(n)})\rarrow 1$, and asymptotically invariant for $\cR_{0,1}$ thanks to Claim(4).
		
	Furthermore, by Claim(1) and the foregoing paragraph, we have
	\[
	\liminf_{n\rarrow\infty} \mu(V_nA_n\Delta A_n)> 0.
	\]
	Combining this with \eqref{eq:TV} and \eqref{eq:Tnx}, we thus get
	\[
	\liminf_{n\rarrow\infty} \mu(T_nA_n\Delta A_n)> 0.
	\]
	However, this is a contradiction with \eqref{eq:ac2}, and thus finishes the proof of the Lemma.
\end{proof}

The last Lemma we will need in order to complete the proof of Theorem~\ref{mthm:Schmidt} deduces intertwining phenomena in the presence of a non-Schmidt equivalence relation.

\begin{lemma}\label{lem:Schmidt}
	Let $\cR_1, \cR_2$ be countable ergodic p.m.p. equivalence relations on $(X_1,\mu_1)$ and $(X_2,\mu_2)$ respectively, and suppose $\cR_{0,1}, \cR_{0,2}$ are hyperfinite ergodic p.m.p. equivalence relations on $(Y_1,\nu_1)$ and $(Y_2,\nu_2)$ respectively. Assume that $\cR_1$ is not Schmidt, and that $\cR\coloneqq\cR_1\times \cR_{0,1} \cong \cR_2\times \cR_{0,2}$. Then $\cR_{0,2}\emb_\cR \cR_{0,1}$.
\end{lemma}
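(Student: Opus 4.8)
Here is the plan.

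\textbf{Reduction and the escaping sequence.} The plan is to argue by contradiction via Lemma~\ref{lem:inteqrel}. Identifying $\cR\coloneqq\cR_1\times\cR_{0,1}$ with $\cR_2\times\cR_{0,2}$, we view $\cR_{0,2}=\id_{X_2}\times\cR_{0,2}$ and $\cR_{0,1}=\id_{X_1}\times\cR_{0,1}$ as subequivalence relations of $\cR$. By Lemma~\ref{lem:inteqrel} and Definition~\ref{def:inteqrel}, proving $\cR_{0,2}\emb_\cR\cR_{0,1}$ amounts to ruling out a sequence $(\theta_n)_n$ in the full group $[\cR_{0,2}]$ with $\varphi_{\cR_{0,1}}(\psi\theta_n\psi')\rarrow 0$ for all $\psi,\psi'\in[\cR]$. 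Assume such a sequence exists. Since any two $\cR$-equivalent points sharing the same $X_1$-coordinate are automatically $\cR_{0,1}$-equivalent, one has for every $\theta\in[\cR]$ that $\varphi_{\cR_{0,1}}(\theta)=\mu(\{z\in X\mid \theta \text{ fixes the } X_1\text{-coordinate of } z\})$; as $\{z\mid\theta_n(z)=z\}$ is contained in this set, the hypothesis $\varphi_{\cR_{0,1}}(\theta_n)\rarrow 0$ forces $\mu(\{z\mid\theta_n(z)\neq z\})\rarrow 1$. Finally, note that for $g\in[\cR_{0,1}]$ we have $\varphi_{\cR_{0,1}}(g\theta g^{-1})=\varphi_{\cR_{0,1}}(\theta)$, since $g$ fixes all $X_1$-coordinates; thus conjugating by elements of $[\cR_{0,1}]$ leaves the escaping behaviour untouched.

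\textbf{Turning $(\theta_n)_n$ into an escaping central sequence.} The heart of the argument is to produce from $(\theta_n)_n$ a \emph{non-trivial central sequence} $(V_n)_n$ in $[\cR]$ for which $\varphi_{\cR_{0,1}}(V_n)$ stays bounded away from $1$. Realize $\cR_{0,2}$ as the tail equivalence relation on $\prod_{\N}\Z/2\Z$, exactly as in the proof of Lemma~\ref{lem:nonSchmidt}. Any sequence of automorphisms fixing more and more initial coordinates is asymptotically central in $[\cR_{0,2}]$, and hence — because in a product $\cR_2\times\cR_{0,2}$ an automorphism supported on one factor automatically commutes asymptotically with everything supported on the other — also asymptotically central in $[\cR]$. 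Using this, together with the ergodicity and homogeneity of the tail relations and the $[\cR_{0,1}]$-conjugation invariance of $\varphi_{\cR_{0,1}}$ noted above, one replaces $\theta_n$ by a conjugate $V_n=g_n\theta_n g_n^{-1}$ that is asymptotically central in $[\cR]$, and checks that $\varphi_{\cR_{0,1}}(V_n)\not\rarrow 1$. I expect this to be the main obstacle: conjugating $\theta_n$ by full-group elements of $\cR_{0,2}$ can in principle destroy the ``$X_1$-coordinate moving'' behaviour supplied by escaping, so the conjugators $g_n$ must be chosen with care — e.g. by combining a shallow $\cR_{0,2}$-tail conjugation (to gain asymptotic centrality in $[\cR]$) with an $\cR_{0,1}$-conjugation (which does not change $\varphi_{\cR_{0,1}}$), and controlling the interaction of the two requires bookkeeping of the same flavour as in the proof of Lemma~\ref{lem:nonSchmidt}.

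\textbf{Conclusion.} Once $(V_n)_n$ is at hand the contradiction is immediate. Since $\cR_1$ is not Schmidt and $(V_n)_n\subset[\cR]=[[\cR_1\times\cR_{0,1}]]$ is a central sequence, Lemma~\ref{lem:cseqSchmidt} yields, for almost every $x\in X_1$, a central sequence $(V_{n,x})_n$ in $[\cR_{0,1}]$ with $\mu(\{(x,y)\mid V_n(x,y)\neq(x,V_{n,x}(y))\})\rarrow 0$. For every $(x,y)$ outside this exceptional set, $(V_n(x,y),(x,y))=((x,V_{n,x}(y)),(x,y))\in\id_{X_1}\times\cR_{0,1}=\cR_{0,1}$, so $\varphi_{\cR_{0,1}}(V_n)\rarrow 1$, contradicting the previous paragraph. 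Hence $\cR_{0,2}\emb_\cR\cR_{0,1}$. (As a cross-check: by Corollary~\ref{cor:RvLRprod}, with the roles of the two tensor factors interchanged, the desired statement is equivalent to $L^\infty(X_1)\emb_{L(\cR)}L^\infty(X_2)$, and one may run a parallel argument on the von Neumann algebra side with Popa's criterion, Theorem~\ref{thm:Popaint}, in place of Lemma~\ref{lem:inteqrel}; the combinatorial core is the same.)
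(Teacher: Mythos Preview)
Your reduction and conclusion paragraphs match the paper's argument exactly: argue by contradiction via Lemma~\ref{lem:inteqrel}, and finish by applying Lemma~\ref{lem:cseqSchmidt} to a central sequence to force $\varphi_{\cR_{0,1}}\to 1$, contradicting escaping. The issue is entirely in your middle step.

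Your proposal to manufacture the needed central-but-escaping sequence by \emph{conjugating} a given escaping sequence $(\theta_n)_n\subset[\cR_{0,2}]$ has a genuine gap. To make $g_n\theta_n g_n^{-1}$ asymptotically central you must push it into the deep tail of $\cR_{0,2}$, which requires conjugators $g_n\in[\cR_{0,2}]$ that depend on $n$; but the escaping hypothesis $\varphi_{\cR_{0,1}}(\psi\theta_n\psi')\to 0$ only applies for \emph{fixed} $\psi,\psi'$, so you have no control over $\varphi_{\cR_{0,1}}(g_n\theta_n g_n^{-1})$. The $[\cR_{0,1}]$-conjugation you mention does preserve $\varphi_{\cR_{0,1}}$, but it is useless for producing centrality in $[\cR]=[\cR_2\times\cR_{0,2}]$, since $[\cR_{0,1}]$ and $[\cR_{0,2}]$ have nothing to do with one another across the isomorphism. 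Your own parenthetical (``conjugating $\theta_n$ by full-group elements of $\cR_{0,2}$ can in principle destroy the $X_1$-coordinate moving behaviour'') correctly identifies the obstruction, and it is not just bookkeeping: there is no reason a conjugate central sequence should retain $\varphi_{\cR_{0,1}}\not\to 1$.

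The paper sidesteps this entirely. Realizing $\cR_{0,2}$ as the tail relation on $\prod_{\N}\Z/2\Z$, it observes that for each $n$ the subequivalence relation $\cR_{0,2}^{(n)}\coloneqq\cR(\oplus_{m>n}\Z/2\Z\act\prod_\N\Z/2\Z)$ has \emph{finite index} in $\cR_{0,2}$; hence $\cR_{0,2}\not\emb_\cR\cR_{0,1}$ forces $\cR_{0,2}^{(n)}\not\emb_\cR\cR_{0,1}$ as well. Applying Lemma~\ref{lem:inteqrel} to $\cR_{0,2}^{(n)}$ yields, for each $n$, an escaping sequence already living in $[\id_{X_2}\times\cR_{0,2}^{(n)}]$, i.e.\ already fixing the first $n$ coordinates of $Y_2$. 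A diagonal choice then produces a single sequence $(\theta_n)_n$ that is simultaneously escaping (by construction) and asymptotically central in $[\cR]$ (because $\theta_n$ lives in the $n$-th tail). No conjugation is needed, and the tension you identified never arises. This finite-index trick is the missing idea in your proposal.
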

\begin{proof}
	For notational convenience, we will identify $(X_1\times Y_1,\mu_1\times \nu_1)$ and $(X_2\times Y_2,\mu_2\times \nu_2)$ through the given isomorphism, and denote it by $(Z,\mu)$. Assume that $\cR_{0,2}\not\emb_\cR \cR_{0,1}$. 	
	
	{\bf Claim.} We can find a sequence $(\theta_n)_{n=1}^\infty\in [\id_{X_2}\!\times\cR_{0,2}]$ which is central for $\cR$, and such that $\varphi_{\id_{X_1}\!\!\times\cR_{0,1}}(\theta_n)\rarrow 0$.
	
	{\it Proof of the Claim.}	
	Since $\cR_{0,2}$ is ergodic, hyperfinite, and p.m.p., we can again realize it as the orbit equivalence relation arising from the action of $\oplus_\N \Z/2\Z$ on $\prod_\N \Z/2\Z$ by componentwise translation. For a fixed $n\in\N$, we denote by $\cR_{0,2}^{(n)}$ the equivalence relation $\cR(\oplus_{m>n} \Z/2\Z \act \prod_\N \Z/2\Z)$. By construction, $\cR_{0,2}^{(n)}\leq \cR_{0,2}$ has finite index. As subequivalence relations of $\cR$, since $\cR_{0,2}\not\emb_\cR \cR_{0,1}$, we thus also get that $\cR_{0,2}^{(n)}\not\emb_\cR \cR_{0,1}$. Passing to the von Neumann algebra level, we get $L^\infty(X_2)\otb L(\cR_{0,2}^{(n)})\not\emb_{L(\cR)} L^\infty(X_1)\otb L(\cR_{0,1})$. Therefore, by Theorem~\ref{thm:Popaint}, we get a sequence of unitaries $u_k$ in any subgroup $\cU\subset \cU(L^\infty(X_2)\otb L(\cR_{0,2}^{(n)}))$ that generates $L^\infty(X_2)\otb L(\cR_{0,2}^{(n)})$, satisfying $\norm{E_{L^\infty(X_1)\otb L(\cR_{0,1})}(u_k)}_2\rarrow 0$. In particular, this holds for unitaries of the form $v\ot u_\theta$, with $v\in L^\infty(X_2)$ and $\theta\in [\cR_{0,2}^{(n)}]$, which we identify with a subset of $[\cR_{0,2}]$ by identifying $\theta$ with $\id_{\prod_{i\leq n}\Z/2\Z}\times\theta$. Hence we can find unitaries $v_k\in L^\infty(X_2)$ and $\theta_k^{(n)}\in [\cR_{0,2}^{(n)}]$ such that
	\begin{equation}\label{eq:disappear}
	\norm{E_{L^\infty(X_1)\otb L(\cR_{0,1})}(v_k\ot u_{\theta_k^{(n)}})}_2 = \norm{E_{L^\infty(X_1)\otb L(\cR_{0,1})}(1\ot u_{\theta_k^{(n)}})}_2 \rarrow 0,
	\end{equation}
	where the equality follows from the fact that $v_k\in L^\infty(X_2)\subset L^\infty(X_1)\otb L(\cR_{0,1})$ and the fact that the conditional expectation is bimodular. A straightforward calculation moreover implies that
	\[
	\norm{E_{L^\infty(X_1)\otb L(\cR_{0,1})}(1\ot u_{\theta_k^{(n)}})}_2^2 = \varphi_{\id_{X_1}\!\!\times\cR_{0,1}}(\theta_k^{(n)}),
	\]
	hence the latter converges to zero as well. Choosing $k_n\geq n$ such that $\varphi_{\id_{X_1}\!\!\times\cR_{0,1}}(\theta_{k_n}^{(n)})\leq \frac{1}{n}$ and putting $\theta_n\coloneqq\theta_{k_n}^{(n)}$ we now have a sequence $(\theta_n)_n$ in $[\id_{X_2}\!\times\cR_{0,2}]$ such that
	\begin{enumerate}[label=(\alph*)]
		\item $\varphi_{\id_{X_1}\!\!\times\cR_{0,1}}(\theta_n)\rarrow 0$ as $n\rarrow\infty$, and
		\item $\theta_n$ is of the form $\theta_n = \id_{X_2}\times\, (\id_{\prod_{i\leq n}\Z/2\Z} \,\times \,\tilde\theta_n)$.
	\end{enumerate}
	Since it clearly follows from (b) that $(\theta_n)_n$ forms a central sequence in $[\cR_2\times\cR_{0,2}]=[\cR]$, this finishes the proof of Claim.
	\hfill(End Claim $\square$) 
	
	Taking $(\theta_n)_n$ as in the Claim, applying Lemma~\ref{lem:cseqSchmidt} together with the fact that $\cR_1$ is not Schmidt gives central sequences $(\theta_{n,x})_n$ in $[[\cR_{0,1}]]$ for almost every $x\in X_1$ such that 
	\[
	\mu(\{(x,y)\in X_1\times Y_1\mid \theta_n(x,y)\neq (x,\theta_{n,x}(y))\})\rarrow 0.
	\]
	In particular, denoting by $\theta_n^{(1)}(x,y)$ the first coordinate of $\theta_n(x,y)\in X_1\times Y_1$, we get that
	\begin{equation}\label{eq:tozero}
	\mu(\{(x,y)\in X_1\times Y_1\mid \theta_n^{(1)}(x,y)\neq x\})\rarrow 0.
	\end{equation}
	However, the fact that $\varphi_{\id_{X_1}\!\!\times\cR_{0,1}}(\theta_n)\rarrow 0$ translates to
	\begin{align*}
	\mu(\{(x,y)\in X_1\times Y_1\mid \;&\theta_n^{(1)}(x,y)=x\}) \\
	&= \mu(\{(x,y)\in X_1\times Y_1\mid \theta_n(x,y)(\id_{X_1}\!\times\cR_{0,1})(x,y)\})\\
	&= \varphi_{\id_{X_1}\!\!\times\cR_{0,1}}(\theta_n) \rarrow 0.
	\end{align*}
	This contradicts \eqref{eq:tozero}, and we conclude that indeed $\cR_{0,2}\emb_\cR \cR_{0,1}$.
\end{proof}

\begin{remark}
	From the proof of the Claim in the above Lemma, we notice that the following stronger statement holds for intertwining of equivalence relations, similar to the von Neumann algebra case: In the setting of Lemma~\ref{lem:inteqrel}, let $G\subset [\cT]$ be a subgroup that generates $\cT$. Then the following are equivalent.
	\begin{enumerate}
		\item $\cT\emb_\cR \cS$.
		\item There is no sequence $\{\theta_n\}_{n=1}^\infty\sub G$ such that $\varphi_\cS(\psi\theta_n\psi')\rarrow 0$ for all $\psi,\psi'\in [\cR]$.
	\end{enumerate}
	The reason is that the involved von Neumann algebras $L(\cT)$, $L(\cS)$, and $L(\cR)$ all contain the Cartan subalgebra $L^\infty(X)$, and so unitaries from $L^\infty(X)$ will disappear in the calculation as in \eqref{eq:disappear} above. Then one uses \cite[Lemma~1.8]{Io11} (Lemma~\ref{lem:RvLR} above) and the observation that unitaries from $L^\infty(X)$ together with unitaries of the form $u_\theta$, $\theta\in [\cT]$, generate $L(\cT)$.
\end{remark}

\begin{proof}[Proof of Theorem~\ref{mthm:Schmidt}]
In the setting of Theorem~\ref{mthm:Schmidt}, write $\cR\coloneqq \cR_1\times\cR_{0,1}\cong \cR_2\times\cR_{0,2}$, where $\cR_{0,1}$ and $\cR_{0,2}$ denote two copies of $\cR_{hyp}$. From Lemma~\ref{lem:nonSchmidt} we get that $\cR_2$ is either not Schmidt or stable. In the latter case, item (2) of Theorem~\ref{mthm:Schmidt} holds. In the former case, we get from applying  Lemma~\ref{lem:Schmidt} twice that $\cR_{0,1}\emb_\cR\cR_{0,2}$ and $\cR_{0,2}\emb_\cR\cR_{0,1}$. An application of Proposition~\ref{prop:commutantish} then finishes the proof.
\end{proof}

\end{document}